\let\origmaketitle\maketitle
\def\maketitle{
  \begingroup
  \def\uppercasenonmath##1{} % this disables uppercasing title
  \let\MakeUppercase\relax % this disables uppercasing authors
  \origmaketitle
  \endgroup
}
\newcommand{\Rm}{\mathbb{R}}
\newcommand{\Nm}{\mathbb{N}}
\newcommand{\Zm}{\mathbb{Z}}
\newcommand{\Sm}{\mathbb{S}}
\newcommand{\be}{\[}
\newcommand{\ee}{\]}
\newcommand{\ba}{\[\begin{aligned}}
\newcommand{\ea}{\end{aligned}\]}
\newcommand{\va}{\varphi}
\newcommand{\pp}{\partial}
\newcommand{\bv}[1]{\boldsymbol{\mathrm{#1}}}
\newcommand{\uv}{\theta}
\newcommand{\argmin}{\mathop{\mathrm{arg\,min}}}
\newtheorem{thm}{Theorem}[section]
\newtheorem{lem}[thm]{Lemma}
\newtheorem{prop}[thm]{Proposition}
\theoremstyle{remark}\newtheorem{rmk}[thm]{Remark}
\title[]{{\Large Lecture note on}\\ \vspace{5mm}
{\huge Inverse problems and reconstruction methods}}
\author{Manabu Machida\footnote{{\it Web page}: www.mmachida.com}}
\address[Machida]{Department of Informatics,\\
Faculty of Engineering, Kindai University,\\
Higashi-Hiroshima 739-2116, Japan}
\email{machida@hama-med.ac.jp}
\date{September 10, 2024}
\begin{document}

\maketitle

\begin{abstract}
The area of inverse problems in mathematics is highly interdisciplinary. In various fields of science, engineering, medicine, and industry, there arises a need to reconstruct information about unknown entities that cannot be directly observed. Examples include medical imaging techniques such as X-ray CT and optical tomography. Indeed, the mathematics of inverse problems has often originated from challenges posed by other fields. Inverse problems are often ill-posed and solutions are unstable. In this lecture, we will explore methods to solve such inverse problems.
\end{abstract}

\setcounter{tocdepth}{1}
\tableofcontents

\clearpage
\section{What are inverse problems?}
\label{sec1}

\subsection{Introduction}
\hfill\vskip1mm

Let us look at the following first-order equation.
\begin{equation}
\left\{\begin{aligned}
\pp_tu(x,t)+\pp_xu(x,t)+p(x)u(x,t)=0,&\quad 0<x<\ell,\quad 0<t<T,
\\
u(x,0)=0,&\quad 0<x<\ell,
\\
u(0,t)=b(t),&\quad 0<t<T,
\end{aligned}\right.
\label{sec1:eq1}
\end{equation}
where $\ell,T$ are positive constants. The problem of determining unknown $u$ by using given $p,b$ is called the forward problem. The problem of determining unknown $b$ or $p$, or both from $u$ is called the inverse problem.

In the above equation (\ref{sec1:eq1}), the problem of finding $\ell$ is also an inverse problem. In general, determining of the shape of a domain or subdomain is an inverse problem. An example of such inverse problems is concrete crack detection. The problem of determining the shape of a drum is a well-known inverse problem \cite{Kac66}. This inverse problem can be formulated as an problem of determining the domain $\Omega$ using eigenvalues $\lambda_n$:
\be
\left\{\begin{aligned}
\Delta u+\lambda u=0,&\quad\mbox{in}\;\Omega,
\\
u=0,&\quad\mbox{on}\;\pp\Omega.
\end{aligned}\right.
\ee

Medical imaging modalities such as X-ray CT requires solving inverse problems to obtain reconstructed images. Indeed, in many cases, mathematics of inverse problems has developed from issues in industry and different fields of science and engineering. Such inverse problems include estimation of underground structures from the surface gravity and seismic surveys in oil field exploration, determination of the internal temperature of a blast furnace from external observations in ironmaking, investigation of neutron stars through gravitational wave observations in astronomy, and estimation of gene networks in biology by the study of which gene expressions are activated or suppressed. Finding the equation of motion by an apple which drops from a tree is another example of inverse problems.

In short, problems which seek effects from causes are forward problems and problems which determine causes from effects are inverse problems. It is, however, not so easy to give a definition of inverse problems. Although it is difficult to define inverse problems, in most cases, we can rather easily tell if a problem is a forward problem or an inverse problem.

When the solution of a problem exists, is unique, and is stable, the problem is said to be well-posed in the sense of Hadamard. Since an inverse problem stems from its corresponding forward problem, quite often the existence of a solution is a priori assumed for inverse problems. When the solution of a problem is unique and stable, the problem is said to be well-posed in the sense of Tikhonov. A problem is ill-posed if it is not well-posed. Inverse problems are ill-posed in many cases \cite{Kaipio-Somersalo04}. See also Sec.~4 of \cite{Colton-Kress98} for ill-posed problems.

Different phenomena which appear in science, engineering, and industry are quite often governed by partial differential equations. Inverse problems for partial differential equations emerge when we seek reasons behind a phenomenon. In this note, we particularly focus on inverse problems for partial differential equations. It is also important to numerically reconstruct the solution to an inverse problem. Thus, the research on inverse problems consists of numerical reconstruction and the analysis of uniqueness and stability.

\subsection{Example 1: Heat equation}
\hfill\vskip1mm

Let us consider the heat equation:
\be
\left\{\begin{aligned}
\pp_tu(x,t)-\pp_x^2u(x,t)=\lambda(t)f(x),&\quad 0<x<1,\quad 0<t<T,
\\
u(x,0)=0,&\quad 0<x<1,
\\
u(0,t)=u(1,t)=0,&\quad 0<t<T.
\end{aligned}\right.
\ee
We assume that $f(x)$ is known. Suppose that $u(x_0,t)$ is observed for fixed $x_0\in(0,1)$. We want to solve the inverse problem of determining $\lambda(t)$ using $u(x_0,t)$ ($0<t<T$):
\be
\mbox{Inverse problem:}\quad u(x_0,t)\rightarrow \lambda(t)\quad (0<t<T)
\ee

We assume that $f(x_0)\neq0$ and $df/dx\neq0$ ($0<x<1$). By introducing a linear operator $A$, we rewrite the heat equation as
\be
\left\{\begin{aligned}
\frac{du}{dt}=-Au+\lambda(t)f(x),&\quad0<t<T,
\\
u(0)=0.&
\end{aligned}\right.
\ee
There exists a semigroup with the generator $A$ and we have
\be
u(x,t)=\int_0^te^{-(t-s)A}\lambda(s)f(x)\,ds.
\ee
Hence,
\be
\pp_tu(x_0,t)=\lambda(t)f(x_0)-\int_0^tAe^{-(t-s)A}\lambda(s)f(x_0)\,ds.
\ee
Thus we obtain the Volterra integral equation of the second kind:\footnote{
The Volterra integral equation of the first kind is given by
\[
\xi(t)=\int_{t_0}^tK(t,s)\lambda(s)\,ds.
\]
The Volterra integral equation of the second kind is written as
\[
\lambda(t)=\xi(t)+\int_{t_0}^tK(t,s)\lambda(s)\,ds.
\]
}
\ba
\lambda(t)
&=
\frac{\pp_tu(x_0,t)}{f(x_0)}+\int_0^tAe^{-(t-s)A}\lambda(s)\,ds
\\
&=
\frac{\pp_tu(x_0,t)}{f(x_0)}+(B\lambda)(t),
\ea
where the operator $B$ was introduced.

Let us assume that there exists $M\in(0,1)$ such that
\be
\|B\lambda\|_{C([0,T])}\le M\|\lambda\|_{C([0,T])}.
\ee
Hence,
\be
\|B^k\lambda\|_{C([0,T])}\le M^k\|\lambda\|_{C([0,T])}
\ee
for integer $k$. This means that the series $(I-B)^{-1}=I+B+B^2+\cdots$, where $I$ is the identity, exists and
\be
\left\|(I-B)^{-1}\right\|\le e^M.
\ee
Since $(I-B)\lambda(t)=\pp_tu(x_0,t)/f(x_0)$, we obtain
\be
\|\lambda\|_{C([0,T])}\le C\|u(x_0,\cdot)\|_{C^1([0,T])}.
\ee
Thus, $\lambda$ is obtained.

In the above problem, we assumed $f(x_0)\neq0$ as a priori knowledge. However, such a priori information is not always available. In general, a priori knowledge makes it easier to solve inverse problems. In Sec.~\ref{sec9}, we will come back to the inverse problem for the heat equation with a slightly different setup.

\subsection{Example 2: Contaminated water in a tank}
\hfill\vskip1mm

Let us consider a problem of contaminated water flowing into a tank \cite{Kamimura14}. As shown in Fig.~\ref{sec1:fig01}, contaminated water flows into a water tank of volume $V$. The contaminated water of volume $v$ enters the tank per unit time. Although $v$ is unknown, the volume of water in the tank remains constant because the same amount $v$ of water leaks out. The concentration $b$ of contaminants in the contaminated water is unknown. We can observe the concentration $a(t)$ of contaminants in the water tank at time $t$. We here wish to solve the inverse problem of determining $b,v$ from $a(t)$:
\be
\mbox{Inverse problem:}\quad a(t)\rightarrow b,v
\ee

\begin{figure}[ht]
\begin{center}
\includegraphics[width=0.4\textwidth]{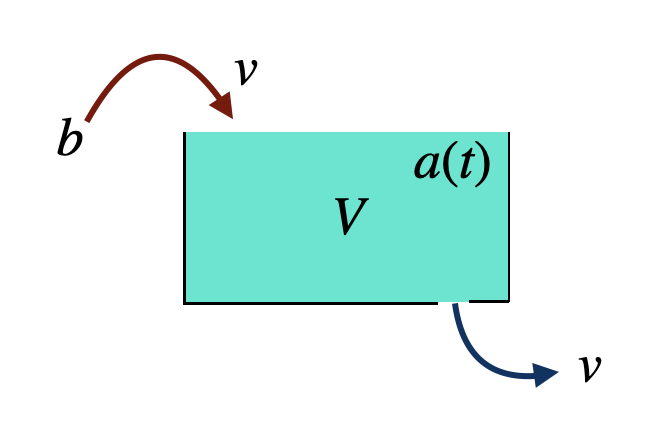}
\end{center}
\caption{
Contaminated water flows into a tank.
}
\label{sec1:fig01}
\end{figure}

The concentration of contaminants in the tank after small time $\Delta t$ ($>0$) can be written as
\be
a(t+\Delta t)=\frac{(V-v\Delta t)a(t)+v(\Delta t)b}{V}.
\ee
That is,
\be
\frac{a(t+\Delta t)-a(t)}{\Delta t}=(b-a(t))y,\quad y=\frac{v}{V}.
\ee
Thus, in the limit of $\Delta t\to0$, we arrive at the following model:
\be
\frac{da}{dt}=y(b-a).
\ee

If we solve the above differential equation, we obtain
\be
a(t)=b-e^{-yt}(b-a_0),\quad a_0=a(0).
\ee
Let us introduce
\be
A(t)=\frac{a(t)-a_0}{a_0},\quad x=\frac{b-a_0}{a_0}.
\ee
Then the above solution is expressed as
\begin{equation}
A(t)=\left(1-e^{-yt}\right)x.
\label{sec1:tank06}
\end{equation}

Suppose we observe $A(t)$ twice at $t=1,2$ (we note that $A(0)=0$):
\begin{equation}
A_1=A(1)=\left(1-e^{-y}\right)x,\quad A_2=A(2)=\left(1-e^{-2y}\right)x.
\label{sec1:tank07}
\end{equation}
Now, the inverse problem is formulated as
\be
\mbox{Inverse problem:}\quad A_1,A_2\rightarrow x,y
\ee
Let us assume the following parameters:\footnote{
The becquerel (${\rm Bq}$) is the SI unit for measuring the amount of radioactivity.}
\be
V=10^4\,{\rm kL},\quad v=200\,{\rm kL}/{\rm day},\quad b=120\,{\rm Bq}/{\rm L},\quad
a_0=24\,{\rm Bq}/{\rm L}.
\ee
Then we have
\be
x=4,\quad y=0.02.
\ee
The question is if we can reconstruct these $x,y$ from $A_1,A_2$. According to the relation (\ref{sec1:tank06}), if there is no noise, we obtain
\be
A_1=A_1^{\rm true}=0.079205\dots,\quad
A_2=A_2^{\rm true}=0.15684\dots.
\ee

Suppose we obtain $A_1,A_2$ from observation. By solving the inverse problem using (\ref{sec1:tank07}), we obtain
\be
x=\frac{A_1^2}{2A_1-A_2},\quad y=\ln\frac{A_1}{A_2-A_1}.
\ee

Since actual measurements contain noise, we can write $A_1=A_1^{\rm obs}$, $A_2=A_2^{\rm obs}$. First we suppose that
\be
A_1^{\rm obs}=0.0792,\quad A_2^{\rm obs}=0.157.
\ee
That is, there are three significant digits. Then we obtain
\be
x=4.48\dots,\quad y=0.0178\dots.
\ee
Next let us suppose that observed values are obtained as
\be
A_1^{\rm obs}=0.0791,\quad A_2^{\rm obs}=0.158.
\ee
In this case, we obtain
\be
x=31.28\dots,\quad y=0.0253\dots.
\ee
The situation is summarized in Table \ref{table1}. We see that in particular the reconstruction of $x$ is unstable.

\begin{table}[h]
\begin{tabular}{c|ll|ll}
 & $A_1$ & $A_2$ & $x$ & $y$ \\ \hline
Exact & $0.079205\dots$ & $0.15684\dots$ & \phantom{0}$4$ & $0.02$ \\
Case 1 & $0.0792$ & $0.157$ & \phantom{0}$4.48\dots$ & $0.0178\dots$ \\
Case 2 & $0.0791$ & $0.158$ & $31.28\dots$ & $0.0253\dots$
\end{tabular}
\label{table1}
\end{table}

By this example, we have seen that even though the forward problem is to solve a simple first-order differential equation, its corresponding inverse problem might be a lot harder.

\section{Stability}
\label{sec2}

In the example of the contaminated water in a tank, finding $x$ was not stable. It is of interest to estimate stability of inverse problems.

\subsection{Transport equation}
\label{sec2.1}
\hfill\vskip1mm

Let us consider the following first-order partial differential equation.
\begin{equation}
\left\{\begin{aligned}
\pp_tu(x,t)+\pp_xu(x,t)+p(x)u(x,t)=0,&\quad 0<x<\ell,\quad-T<t<T,
\\
u(x,0)=a(x),&\quad 0<x<\ell,
\\
u(0,t)=b(t),&\quad -T<t<T.
\end{aligned}\right.
\label{transport1d}
\end{equation}
This is the radiative transport equation in one spatial dimension without scattering. During the X-ray examination at a hospital, X-rays in the human body obey the above transport equation (if negligible scattering is ignored). The coefficient $p(x)$ describes absorption of X-rays in the medium.

Here we suppose that $p\in L^{\infty}(0,\ell)$ is unknown but $u$ can be observed at $x=\ell$. We wish to solve the following inverse problem:
\be
\mbox{Inverse problem:}\quad u(\ell,t)\rightarrow p(x)\quad
(-T<t<T,\;0<x<\ell)
\ee
Our goal is to obtain a stability estimate using some norm $\|\cdot\|_*$ such as
\be
\|p-q\|_{L^2(0,\ell)}\le C\left\|u[p](\ell,\cdot)-u[q](\ell,\cdot)\right\|_*.
\ee
In the above estimate and hereafter in this section, we let $C>0$ be a generic constant.

\subsection{Linearization}
\hfill\vskip1mm

Let us consider two transport equations with different absorption coefficients:
\be
\left\{\begin{aligned}
\pp_tu[p](x,t)+\pp_xu[p](x,t)+p(x)u[p](x,t)=0,&\quad 0<x<\ell,\quad-T<t<T,
\\
u[p](x,0)=a(x),&\quad 0<x<\ell,
\\
u[p](0,t)=b(t),&\quad -T<t<T,
\end{aligned}\right.
\ee
and
\be
\left\{\begin{aligned}
\pp_tu[q](x,t)+\pp_xu[q](x,t)+q(x)u[q](x,t)=0,&\quad 0<x<\ell,\quad-T<t<T,
\\
u[q](x,0)=a(x),&\quad 0<x<\ell,
\\
u[q](0,t)=b(t),&\quad -T<t<T.
\end{aligned}\right.
\ee
By subtraction we obtain
\be
\left\{\begin{aligned}
\left(\pp_t+\pp_x+p\right)\left(u[p]-u[q]\right)=-(p-q)u[q],&\quad 0<x<\ell,\quad-T<t<T,
\\
\left(u[p]-u[q]\right)(x,0)=0,&\quad 0<x<\ell,
\\
\left(u[p]-u[q]\right)(0,t)=0,&\quad -T<t<T.
\end{aligned}\right.
\ee

Let us set
\be
\widetilde{u}(x,t)=u[p](x,t)-u[q](x,t),\quad
f(x)=p(x)-q(x),\quad R(x,t)=-u[q](x,t).
\ee
Then we have
\be
\left\{\begin{aligned}
\left(\pp_t+\pp_x+p(x)\right)\widetilde{u}(x,t)=f(x)R(x,t),&\quad 0<x<\ell,\quad-T<t<T,
\\
\widetilde{u}(x,0)=0,&\quad 0<x<\ell,
\\
\widetilde{u}(0,t)=0,&\quad -T<t<T.
\end{aligned}\right.
\ee
In this way, the inverse problem of determining the coefficient $p$ has become the inverse problem of determining $f$ in the source term. We note that in Sec.~\ref{sec2:weightfunc}, we assume $|R|\le M$ with a positive constant $M$.

Let us introduce
\be
y=\pp_t\widetilde{u}.
\ee
We have
\begin{equation}
\left\{\begin{aligned}
Py(x,t)=\left(\pp_t+\pp_x+p(x)\right)y(x,t)=f(x)\pp_tR(x,t),&\quad 0<x<\ell,\quad-T<t<T,
\\
y(x,0)=f(x)R(x,0),&\quad 0<x<\ell,
\\
y(0,t)=0,&\quad -T<t<T.
\end{aligned}\right.
\label{sec2:leq1}
\end{equation}
In (\ref{sec2:leq1}), the initial value is nonzero. This is necessary for the stability estimate below.

\subsection{Carleman estimates}
\hfill\vskip1mm

A Carleman estimate is an $L^2$-weighted estimate with large parameters \cite{Carleman39,Hoermander63}. This provides a powerful tool for inverse problems for partial differential equations \cite{Bukhgeim-Klibanov81,Imanuvilov-Yamamoto01,Isakov06,Yamamoto09}. Let us set the weight function $\va(x,t)$ as
\begin{equation}
\va(x,t)=|x-x_0|^2-\beta t^2,\quad x_0\neq[0,\ell],\quad 0<\beta<1.
\label{sec2:weightfunc}
\end{equation}

\begin{prop}
Assume that $w\in C_0^1([0,\ell]\times[-T,T])$ satisfies $w(0,t)=w(x,\pm T)=0$. Then there exist constants $C>0$, $s_0>0$ such that
\be
\int_{-T}^T\int_0^{\ell}s|w|^2e^{2s\va}\,dxdt\le
C\int_{-T}^T\int_0^{\ell}\left|Pw\right|^2e^{2s\va}\,dxdt+
Cs\int_{-T}^T|w(\ell,t)|^2e^{2s\va(\ell,t)}\,dt
\ee
for all $s\ge s_0$.
\end{prop}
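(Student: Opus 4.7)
My plan is the classical Carleman-weight conjugation argument. First I would introduce $z=e^{s\va}w$, so that $w=e^{-s\va}z$ and $z$ inherits the vanishing $z(0,t)=0$, $z(x,\pm T)=0$ from $w$, while the trace $z(\ell,t)$ is unconstrained. A direct computation gives the conjugated operator
\be
Lz:=e^{s\va}P(e^{-s\va}z)=\pp_tz+\pp_xz+q(x,t)\,z,
\ee
where $q(x,t)=-s(\pp_t\va+\pp_x\va)+p(x)=-2s[(x-x_0)-\beta t]+p(x)$, using $\pp_x\va+\pp_t\va=2(x-x_0)-2\beta t$.

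Next I would split $L=L_1+L_2$ with $L_1z=\pp_tz+\pp_xz$ and $L_2z=qz$, and expand $\|Lz\|_{L^2}^2=\|L_1z\|^2+\|L_2z\|^2+2\langle L_1z,L_2z\rangle$. The positive bulk term comes from the cross product: using the identity $2(\pp_tz+\pp_xz)z=\pp_t(z^2)+\pp_x(z^2)$ and integrating by parts over $(0,\ell)\times(-T,T)$,
\be
2\langle L_1z,L_2z\rangle=-\int_{-T}^T\!\int_0^\ell(\pp_tq+\pp_xq)\,z^2\,dxdt+\int_{-T}^T q(\ell,t)\,z(\ell,t)^2\,dt,
\ee
after the boundary contributions at $t=\pm T$ and $x=0$ are discarded using the vanishing of $z$ there. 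The key algebraic fact is that, ignoring the contribution of $p$, one has $-\pp_tq-\pp_xq=-2s\beta+2s=2s(1-\beta)>0$ precisely because $0<\beta<1$, while $|q(\ell,t)|\le Cs$ uniformly in $t$, so the surviving $x=\ell$ term is bounded below by $-Cs\int_{-T}^T z(\ell,t)^2\,dt$.

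Dropping the nonnegative squares $\|L_1z\|^2+\|L_2z\|^2$ and rearranging yields
\be
2s(1-\beta)\int_{-T}^T\!\int_0^\ell z^2\,dxdt\le\|Lz\|^2+Cs\int_{-T}^T z(\ell,t)^2\,dt+C\|p\|_\infty^2\int_{-T}^T\!\int_0^\ell z^2\,dxdt,
\ee
where the last summand records the perturbation by $p$. Since $p\in L^\infty$ is not differentiable, rather than integrating $\pp_xq$ by parts against $p'(x)$ I would keep $pz$ as a perturbation and use $\|pz\|\le\|p\|_\infty\|z\|$, absorbing the resulting $O(1)\|z\|^2$ term into the $s$-weighted left-hand side by choosing $s\ge s_0$ large enough (depending on $\beta$ and $\|p\|_\infty$). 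Translating back via $z=e^{s\va}w$ and the identities $\|Lz\|^2=\int e^{2s\va}|Pw|^2\,dxdt$, $\int z^2\,dxdt=\int e^{2s\va}w^2\,dxdt$, and $z(\ell,t)^2=e^{2s\va(\ell,t)}w(\ell,t)^2$ produces the stated estimate with a generic constant $C$.

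The main obstacle I anticipate is the careful bookkeeping: tracking signs in the cross term and in the $x=\ell$ boundary contribution, and handling the low regularity of $p$ by perturbation rather than by an illegal integration by parts. The condition $0<\beta<1$ is essential, as it is exactly what makes $-\pp_tq-\pp_xq$ a strictly positive multiple of $s$. The hypothesis $x_0\notin[0,\ell]$ does not enter this one-dimensional calculation directly—since $\pp_xq=-2s$ is of definite sign regardless—but it is the standard pseudoconvexity condition ensuring $\pp_x\va=2(x-x_0)$ does not vanish on $[0,\ell]$, which is why $\va$ is an admissible Carleman weight for this transport operator.
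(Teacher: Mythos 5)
Your proposal is correct and follows essentially the same route as the paper: conjugate by $e^{s\va}$, extract the bulk positivity $2s(1-\beta)$ from the cross term after integration by parts, keep the $x=\ell$ boundary contribution with an $O(s)$ bound, and absorb the $L^\infty$ perturbation $p$ by taking $s$ large. The only cosmetic difference is that the paper applies the argument to $P_0w=Pw-pw$ and adds $p$ back at the end, whereas you fold $p$ into the zeroth-order coefficient $q$ of the conjugated operator and treat it perturbatively there; the two bookkeepings are equivalent.
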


\begin{proof}
Let us set
\be
P_0w(x,t):=\pp_tw(x,t)+\pp_xw(x,t),\quad 0<x<\ell,\quad -T<t<T.
\ee
By the change $z=e^{s\va}w$, we have
\be
P_0z=sA(x,t)z+e^{s\va}P_0w,
\ee
where
\be
A(x,t)=\pp_t\va+\pp_x\va=-2\beta t+2(x-x_0).
\ee
Hence \cite{Machida-Yamamoto14},
\ba
\int_{-T}^T\int_0^{\ell}\left|P_0w\right|^2e^{2s\va}\,dxdt
&=
\int_{-T}^T\int_0^{\ell}\left(P_0z-sAz\right)^2\,dxdt
\\
&\ge
-2s\int_{-T}^T\int_0^{\ell}A(\pp_tz+\pp_xz)z\,dxdt
\\
&=
-s\int_{-T}^T\int_0^{\ell}A(\pp_tz^2+\pp_xz^2)\,dxdt
\\
&=
s\int_{-T}^T\int_0^{\ell}(\pp_tA+\pp_xA)z^2\,dxdt-s\int_{-T}^TA(\ell,t)z(\ell,t)^2\,dt
\\
&=
2s(1-\beta)\int_{-T}^T\int_0^{\ell}z^2\,dxdt-s\int_{-T}^TA(\ell,t)z(\ell,t)^2\,dt
\\
&\ge
2s(1-\beta)\int_{-T}^T\int_0^{\ell}z^2\,dxdt-Cs\int_{-T}^Tz(\ell,t)^2\,dt.
\ea
Thus,
\be
2(1-\beta)\int_{-T}^T\int_0^{\ell}s|w|^2e^{2s\va}\,dxdt
\le
\int_{-T}^T\int_0^{\ell}\left|P_0w\right|^2e^{2s\va}\,dxdt+
Cs\int_{-T}^T|w(\ell,t)|^2e^{2s\va(\ell,t)}\,dt.
\ee

Recalling $P_0w=Pw-pw$, we have
\be
\int_{-T}^T\int_0^{\ell}\left|P_0w\right|^2e^{2s\va}\,dxdt
\le
2\int_{-T}^T\int_0^{\ell}\left|Pw\right|^2e^{2s\va}\,dxdt+
2\int_{-T}^T\int_0^{\ell}|pw|^2e^{2s\va}\,dxdt.
\ee
We obtain
\ba
2(1-\beta)\int_{-T}^T\int_0^{\ell}s|w|^2e^{2s\va}\,dxdt
&\le
2\int_{-T}^T\int_0^{\ell}\left|Pw\right|^2e^{2s\va}\,dxdt+
2\int_{-T}^T\int_0^{\ell}|pw|^2e^{2s\va}\,dxdt
\\
&+
Cs\int_{-T}^T|w(\ell,t)|^2e^{2s\va(\ell,t)}\,dt.
\ea
By moving the second term on the right-hand side to the left-hand side, we obtain the inequality in the proposition for sufficiently large $s$. This completes the proof.
\end{proof}

\subsection{Stability estimate}
\label{sec2:weightfunc}
\hfill\vskip1mm

Let us consider (\ref{sec2:leq1}). We assume that $R,\pp_tR\in L^2(-T,T;\,L^{\infty}(0,\ell))$, $p\in L^{\infty}(0,\ell)$, $f\in L^2(0,\ell)$, and $|R(x,0)|\ge a_0$ ($x\in[0,\ell]$) with a constant $a_0>0$. To estimate $f$ in the source term, we will use the Carleman estimate for $\pp_tu$. We begin by introducing the cut-off function $\chi(t)$ to have a function which is zero at $t=\pm T$: The cut-off function $\chi\in C_0^{\infty}(\Rm)$, $0\le\chi\le1$, is introduced as shown in Fig.~\ref{sec2:fig02}. A more explicit form of $\chi$ is given below.

\begin{figure}[ht]
\begin{center}
\includegraphics[width=0.4\textwidth]{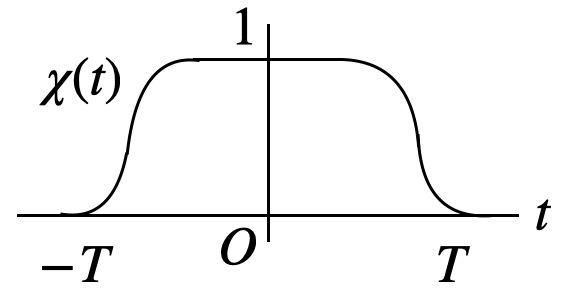}
\end{center}
\caption{
Cutoff function $\chi$.
}
\label{sec2:fig02}
\end{figure}

We set
\be
z=\chi ye^{s\va}.
\ee
We note that
\ba
\pp_tz&=
(\pp_t\chi)ye^{s\va}+\chi(\pp_ty)e^{s\va}+\chi y\pp_te^{s\va},
\\
\pp_xz&=
\chi(\pp_xy)e^{s\va}+\chi ys(\pp_x\va)e^{s\va}.
\ea
Hence,
\begin{equation}\begin{aligned}
Pz&=
\pp_tz(x,t)+\pp_xz(x,t)+p(x)z(x,t)
\\
&=
\chi e^{s\va}\left(\pp_ty+\pp_xy+py\right)+(\pp_t\chi)ye^{s\va}+\chi ys\left(\pp_x\va+\pp_t\va\right)e^{s\va}
\\
&=
\chi e^{s\va}Py+(\pp_t\chi)ye^{s\va}+\chi ys\left(\pp_x\va+\pp_t\va\right)e^{s\va}
\\
&=
\chi e^{s\va}f\pp_tR+(\pp_t\chi)ye^{s\va}+\chi sy\left(\pp_x\va+\pp_t\va\right)e^{s\va}.
\end{aligned}
\label{Pzeq}
\end{equation}
Let us introduce
\be
w=\chi y.
\ee
We note that $w(0,\cdot)=w(\cdot,\pm T)=0$. Recalling (\ref{sec2:leq1}), we obtain
\be
Pw=\chi f\pp_tR+(\pp_t\chi)y.
\ee

We assume that $T$ is sufficiently large such that
\be
T>\frac{1}{\sqrt{\beta}}\sup_{0\le x\le\ell}|x-x_0|,
\ee
where $\beta$ was introduced in (\ref{sec2:weightfunc}). The physical reason for this condition is that the propagation speed of a hyperbolic equation is finite. We note that the weight function $\va$ in (\ref{sec2:weightfunc}) satisfies
\ba
&
\va(x,\pm T)=|x-x_0|^2-\beta T^2<0,\quad x\in[0,\ell],
\\
&
\va(x,0)>0,\quad x\in[0,\ell].
\ea
For any $\varepsilon>0$ such that $\varepsilon<\min(|\max_{0<x<\ell}\va(x,T)|,\min_{0<x<\ell}\va(x,0))$, there exists $\delta>0$ such that
\ba
&
\va(x,t)<-\varepsilon,\quad -T\le t\le-T+2\delta,\quad T-2\delta\le t\le T, \quad x\in[0,\ell],
\\
&
\va(x,t)>\varepsilon,\quad -\delta\le t\le\delta,\quad x\in[0,\ell].
\ea

We give the cutoff function $\chi\in C_0^{\infty}(\Rm)$ as
\be
\chi(t)=\left\{\begin{aligned}
1,&\quad -T+2\delta\le t\le T-2\delta,
\\
0,&\quad -T\le t\le -T+\delta,\quad T-\delta\le t\le T.
\end{aligned}\right.
\ee
The relation between $\chi(t)$ and $\va$ is shown in Fig.~\ref{sec2:fig03}.

\begin{figure}[ht]
\begin{center}
\includegraphics[width=0.4\textwidth]{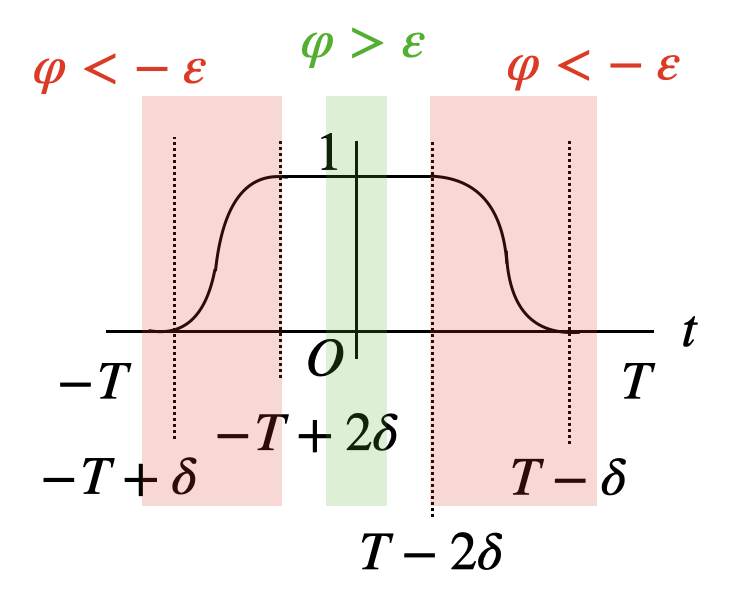}
\end{center}
\caption{
The weight function $\va$ and cutoff function $\chi$.
}
\label{sec2:fig03}
\end{figure}

We use the Carleman estimate for $w$:
\ba
&
s\int_{-T}^T\int_0^{\ell}|w|^2e^{2s\va}\,dxdt
\\
&\le
C\int_{-T}^T\int_0^{\ell}\left|\chi f\pp_tR+(\pp_t\chi)y\right|^2
e^{2s\va(x,t)}\,dxdt+
Cs\int_{-T}^T|w(\ell,t)|^2e^{2s\va(\ell,t)}\,dt
\\
&\le
C\int_{-T}^T\int_0^{\ell}\left|\chi f\pp_tR\right|^2e^{2s\va(x,t)}\,dxdt
+C\int_{-T}^T\int_0^{\ell}|\pp_t\chi|^2|y|^2e^{2s\va}\,dxdt
\\
&+
Cs\int_{-T}^T|w(\ell,t)|^2e^{2s\va(\ell,t)}\,dt.
\ea
Hereafter we assume that there exists a constant $M>0$ such that $\|\widetilde{u}\|_{L^2((0,\ell)\times(-T,T))}\le M$ and $\|y\|_{L^2((0,\ell)\times(-T,T))}\le M$. We also assume that $s$ is sufficiently large.

Since we have
\be
\int_{-T}^T\int_0^{\ell}\left|\chi f\pp_tR\right|^2e^{2s\va(x,t)}\,dxdt\le
C\int_{-T}^T\int_0^{\ell}|f|^2e^{2s\va(x,t)}\,dxdt,
\ee
we obtain
\ba
s\int_{-T}^T\int_0^{\ell}|w|^2e^{2s\va}\,dxdt
&\le
C\int_0^{\ell}|f|^2\left(\int_{-T}^Te^{-2s\beta t^2}\,dt\right)e^{2s\va(x,0)}\,dx
\\
&+
C\int_{-T}^T\int_0^{\ell}(\pp_t\chi)^2|y|^2e^{2s\va}\,dxdt
\\
&+
Cs\int_{-T}^T|w(\ell,t)|^2e^{2s\va(\ell,t)}\,dt.
\ea
By the Lebesgue theorem we can exchange the integral and limit, and\footnote{
If $f(x)=o(g(x))$ as $x\to\infty$ with Landau's little-o notation, it means that for any $\varepsilon>0$, there exists a constant $M>0$ such that $|f(x)|\le\varepsilon|g(x)|$ for all $x>M$. In other words, we have $|f/g|\to0$ as $x\to\infty$.
}
\be
\int_{-T}^Te^{-2s\beta t^2}\,dt=o(1)\quad(s\to\infty).
\ee
We note that
\be
\left\{\begin{aligned}
\pp_t\chi=0&\quad\mbox{for}\;t\in(-T,-T+\delta)\cup(-T+2\delta,T-2\delta)\cup(T-\delta,T),
\\
\pp_t\chi\neq0,\;\va<-\varepsilon&\quad\mbox{for}\;t\in[-T+\delta,-T+2\delta]\cup[T-2\delta,T-\delta].
\end{aligned}\right.
\ee
Hence,
\be
\int_{-T}^T\int_0^{\ell}(\pp_t\chi)^2|y|^2e^{2s\va}\,dxdt\le
Ce^{-2s\varepsilon}\int_{-T}^T\int_0^{\ell}|y|^2\,dxdt
\le
CM^2e^{-2s\varepsilon}.
\ee
Therefore,
\begin{equation}
\begin{aligned}
s\int_{-T}^T\int_0^{\ell}|w|^2e^{2s\va}\,dxdt
&\le
o(1)\int_0^{\ell}|f|^2e^{2s\va(x,0)}\,dx
+CM^2e^{-2s\varepsilon}
\\
&+
Ce^{Cs}\int_{-T}^T|y(\ell,t)|^2\,dt,
\end{aligned}
\label{o1inequality}
\end{equation}
where we used $s\exp(2s\max_t\va(\ell,t))\le\exp(Cs)$.

Now, let us multiply (\ref{Pzeq}) by $2z$ and integrate over $x,t$:
\begin{equation}\begin{aligned}
\int_{-T}^0\int_0^{\ell}\pp_t|z|^2\,dxdt
&=
-\int_{-T}^0\int_0^{\ell}\pp_x|z|^2\,dxdt-\int_{-T}^0\int_0^{\ell}2p|z|^2\,dxdt
\\
&+
\int_{-T}^0\int_0^{\ell}2\left(\chi e^{s\va}f(\pp_tR)z+(\pp_t\chi)ye^{s\va}z+s(\pp_t\va+\pp_x\va)z^2\right)\,dxdt.
\end{aligned}
\label{intPzeq}
\end{equation}
Recall $|R(x,0)|=a(x)\ge a_0$, $x\in[0,\ell]$. Then the left-hand side of (\ref{intPzeq}) becomes
\be
\mbox{LHS of (\ref{intPzeq})}=\int_0^{\ell}|z(x,0)|^2\,dx=
\int_0^{\ell}|y(x,0)|^2e^{2s\va(x,0)}\,dx\ge
a_0^2\int_0^{\ell}|f(x)|^2e^{2s\va(x,0)}\,dx,
\ee
where we used $z(\cdot,\pm T)=0$ for the first equality and $y(x,0)=f(x)R(x,0)$ for the last inequality. Let us focus on the last term on the right-hand side of (\ref{intPzeq}). We have
\ba
\left|\int_{-T}^0\int_0^{\ell}2\chi e^{s\va}f(\pp_tR)z\,dxdt\right|
&\le
C\left|\int_{-T}^0\int_0^{\ell}fe^{s\va}z\,dxdt\right|
\\
&\le
C\int_{-T}^0\int_0^{\ell}|f|^2e^{2s\va}\,dxdt+C\int_{-T}^0\int_0^{\ell}|z|^2\,dxdt.
\ea
Hence for sufficiently large $s$, the right-hand side of (\ref{intPzeq}) can be estimated as
\ba
\left|\mbox{RHS of (\ref{intPzeq})}\right|
&\le
\int_{-T}^0|z(\ell,t)|^2\,dt+
Cs\int_{-T}^0\int_0^{\ell}|z|^2\,dxdt
\\
&+
C\int_{-T}^0\int_0^{\ell}(\pp_t\chi)^2|y|^2e^{2s\va}\,dxdt+
C\int_{-T}^0\int_0^{\ell}|f|^2e^{2s\va}\,dxdt,
\ea
where we used $z(0,\cdot)=0$ for the first term on the right-hand side of the above inequality. The third term on the right-hand side can be estimated as
\be
\int_{-T}^T\int_0^{\ell}(\pp_t\chi)^2|y|^2e^{2s\va}\,dxdt\le CM^2e^{-2s\varepsilon}.
\ee

After all, we have
\ba
a_0^2\int_0^{\ell}|f|^2e^{2s\va(x,0)}\,dx
&\le
\int_{-T}^T\int_0^{\ell}|f|^2e^{2s\va}\,dxdt+CM^2e^{-2s\varepsilon}
+Cs\int_{-T}^T\int_0^{\ell}|w|^2e^{2s\va}\,dxdt
\\
&+
\int_{-T}^T|z(\ell,t)|^2\,dt.
\ea
Let us recall (\ref{o1inequality}). By the same calculation which we did to obtain (\ref{o1inequality}), the first term on the right-hand side of the above inequality can be estimated as
\be
\int_{-T}^T\int_0^{\ell}|f|^2e^{2s\va}\,dxdt\le
o(1)\int_0^{\ell}|f|^2e^{2s\va(x,0)}\,dx.
\ee
Since $z=we^{s\va}$, we have
\be
a_0^2\int_0^{\ell}|f|^2e^{2s\va(x,0)}\,dx
\le
o(1)\int_0^{\ell}|f|^2e^{2s\va(x,0)}\,dx+CM^2e^{-2s\varepsilon}+Ce^{Cs}d^2,
\ee
where
\begin{equation}
d^2=\int_{-T}^T|y(\ell,t)|^2\,dt.
\label{sec2:d2def}
\end{equation}
Since $\va(x,t)>\varepsilon$ ($-\delta\le t\le\delta$, $x\in[0,\ell]$),
\be
\va(x,0)=|x-x_0|^2>\varepsilon+\beta\delta^2.
\ee
Hence,
\begin{equation}
\left(a_0^2-o(1)\right)e^{2s(\varepsilon+\beta\delta^2)}\int_0^{\ell}|f|^2\,dx
\le
CM^2e^{-2s\varepsilon}+Ce^{Cs}d^2
\le
C\left(e^{-2s\varepsilon}+e^{Cs}d^2\right).
\label{sec2:almostdone}
\end{equation}

The inequality (\ref{sec2:almostdone}) is not informative when $d^2$ in (\ref{sec2:d2def}) is large because $\|\widetilde{u}\|_{L^2((0,\ell)\times(-T,T))}\le M$ implies that $\|f\|_{L^2(0,\ell)}$ is not large (In other words, in the calculation in Sec.~\ref{sec2:weightfunc}, we have already used the assumption that $\|p\|_{L^2(0,\ell)}$ is not very large.). Suppose $d^2$ is small. Then we choose $s$ which minimizes the right-hand side of (\ref{sec2:almostdone}):
\be
\frac{d}{ds}\left(e^{-2s\varepsilon}+e^{Cs}d^2\right)=0\quad\Rightarrow\quad
s=\frac{1}{2\varepsilon+C}\ln\frac{2\varepsilon}{Cd^2}.
\ee
We obtain
\be
\|f\|_{L^2(0,\ell)}^2\le
C\left(\int_{-T}^T|y(\ell,t)|^2\,dt\right)^{\theta},\quad
\theta=\frac{2\varepsilon}{2\varepsilon+C}.
\ee
We note that $0<\theta<1$. That is, for sufficiently large $T>0$, there exists a constant $C>0$ such that
\be
\|f\|_{L^2(0,\ell)}\le C\|y(\ell,\cdot)\|_{L^2(-T,T)}^{\theta},\quad
\theta\in(0,1).
\ee
Thus we have arrived at the theorem below.

We define
\be
X=H^1(-T,T;\,L^{\infty}(0,\ell))\cap H^2(-T,T;\,L^2(0,\ell)).
\ee
Let $M>0$ be a fixed constant. We introduce
\be
\mathcal{U}=\left\{u\in X;\;\|u\|_X+\|\nabla u\|_{H^1(-T,T;\,L^2(0,\ell))}\le M\right\}.
\ee

\begin{thm}
Let $u\in\mathcal{U}$ be a solution to (\ref{transport1d}). We assume that $|a(x)|>0$ ($0\le x\le\ell$) and $\|p\|_{L^{\infty}(0,\ell)},\|q\|_{L^{\infty}(0,\ell)}\le M$. For sufficiently large $T>0$, there exists a constant $C=C(\ell,T,a,b,M)>0$ such that
\be
\|p-q\|_{L^2(0,\ell)}\le C\left\|\frac{\pp(u[p](\ell,\cdot)-u[q](\ell,\cdot))}{\pp t}\right\|_{L^2(-T,T)}^{\theta},
\quad 0<\theta<1.
\ee
\end{thm}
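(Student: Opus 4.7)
The plan is to follow exactly the linearization--Carleman--energy--optimization scheme developed in Sections~\ref{sec2.1}--\ref{sec2:weightfunc}, and simply to verify that under the hypotheses of the theorem all the intermediate assumptions made there are satisfied.

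First I would set $\widetilde{u}=u[p]-u[q]$, $f=p-q$, and $R=-u[q]$, so that the linearization in Sec.~2.2 produces the inhomogeneous transport problem for $\widetilde{u}$ with forcing $fR$. Differentiating in $t$ and putting $y=\pp_t\widetilde{u}$ yields the system (\ref{sec2:leq1}), whose defining feature is the nonzero initial value $y(x,0)=f(x)R(x,0)$. The hypothesis $|a(x)|>0$ on the compact interval $[0,\ell]$ gives $|R(x,0)|=|u[q](x,0)|=|a(x)|\ge a_0$ for some $a_0>0$ by continuity, which is precisely the positivity of the initial value needed in Sec.~\ref{sec2:weightfunc}. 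The assumption $u\in\mathcal{U}$ supplies the regularity and uniform $L^2$-bounds $\|\widetilde{u}\|_{L^2}\le M$, $\|y\|_{L^2}\le M$, and ensures $R,\pp_tR\in L^2(-T,T;L^{\infty}(0,\ell))$, while $\|p\|_{L^{\infty}},\|q\|_{L^{\infty}}\le M$ provides the coefficient bound.

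Next I would fix the weight $\va(x,t)=|x-x_0|^2-\beta t^2$ with $x_0\notin[0,\ell]$ and $\beta\in(0,1)$, choose $T$ large enough that $\va(x,\pm T)<0$ on $[0,\ell]$, and select the cutoff $\chi\in C_0^{\infty}(\Rm)$ as in the picture so that $w=\chi y$ satisfies the boundary conditions of the Carleman proposition. Applying that proposition to $w$ gives a weighted control of $y$ by $Pw=\chi f\,\pp_tR+(\pp_t\chi)y$ and by the boundary term at $x=\ell$. Separately, multiplying equation (\ref{Pzeq}) for $z=\chi ye^{s\va}$ by $2z$, integrating in $x$ over $(0,\ell)$ and in $t$ over $(-T,0)$, and using $z(0,\cdot)=z(\cdot,-T)=0$ together with the pointwise lower bound $|R(x,0)|\ge a_0$, pulls out $a_0^2\int_0^{\ell}|f|^2e^{2s\va(x,0)}\,dx$ on the left. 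Combining the two estimates and absorbing the $o(1)\int_0^{\ell}|f|^2 e^{2s\va(x,0)}\,dx$ term on the right into the left for $s$ sufficiently large produces inequality (\ref{sec2:almostdone}).

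Finally I would optimize in $s$. On $-\delta\le t\le\delta$ one has $\va(x,0)\ge\varepsilon+\beta\delta^2$, so the left side of (\ref{sec2:almostdone}) is bounded below by $c\|f\|_{L^2(0,\ell)}^2 e^{2s(\varepsilon+\beta\delta^2)}$, while the right side has the form $C(e^{-2s\varepsilon}+e^{Cs}d^2)$ with $d=\|y(\ell,\cdot)\|_{L^2(-T,T)}$. Choosing $s=(2\varepsilon+C)^{-1}\ln(2\varepsilon/(Cd^2))$ and noting $y(\ell,t)=\pp_t(u[p]-u[q])(\ell,t)$ delivers the announced H\"older estimate with $\theta=2\varepsilon/(2\varepsilon+C)\in(0,1)$, all constants being absorbed into $C=C(\ell,T,a,b,M)$. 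The step I expect to require the most care is the verification that this optimization is legitimate, i.e.\ that $d^2$ is in the regime where the chosen $s$ is positive and large; in the complementary regime $d^2\gtrsim 1$ the a priori bound $\|f\|_{L^2(0,\ell)}\le C M$ coming from $u\in\mathcal{U}$ makes the estimate trivial at the cost of enlarging $C$.
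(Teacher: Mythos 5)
Your proposal is correct and follows essentially the same route as the paper: the theorem is stated there as the culmination of the linearization, the Carleman estimate for $w=\chi y$, the energy identity for $z=\chi ye^{s\va}$ yielding the lower bound $a_0^2\int_0^\ell|f|^2e^{2s\va(x,0)}\,dx$, and the optimization in $s$ leading to the H\"older exponent $\theta=2\varepsilon/(2\varepsilon+C)$. Your closing remark about the regime where $d^2$ is not small matches the paper's own observation that in that case the a priori bound from $u\in\mathcal{U}$ makes the estimate trivial.
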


\subsection{Remarks}
\hfill\vskip1mm

There are some remarks:
\begin{itemize}
\item As we have seen, a stability estimate for one input can be obtained by using Carleman estimates.
\item By extending $(0,T)$ to $(-T,T)$ when the Carleman estimate is used, we can prove the same H\"{o}lder estimate for the time interval $(0,T)$ instead of $(-T,T)$.
\item Indeed, the first-order equation which we have considered can be solved by the method of characteristics. Hence the stability analysis by Carlemn estimates is overkill. But the aim here was to understand how to use Carleman estimates for stability analysis. The above stability analysis can be extended to different inverse problems for partial differential equations and stability can be proved with Carleman estimates.
\item Here we obtained the H\"{o}lder stability (i.e., $0<\theta<1$). It is possible to improve the above proof without using estimates $\|\widetilde{u}\|_{L^2((0,\ell)\times(-T,T))}\le M$, $\|y\|_{L^2((0,\ell)\times(-T,T))}\le M$, and obtain the Lipschitz stability (i.e., $\theta=1$).
\end{itemize}

\section{The heat equation}
\label{sec3}

\subsection{Derivation}
\label{sec3:der}
\hfill\vskip1mm

Let us consider the heat conduction in one dimension (see Fig.~\ref{sec3:fig01}). Let $u(x,t)$ be the temperature at position $x$ at time $t$. The following relation, which is called the Fourier law, has been established by various experiments:
\be
J=-\kappa(x)\pp_xu,
\ee
where $J(x,t)$ is the heat flux (energy per unit area per unit time) and $\kappa(x)$ is the thermal conductivity.

\begin{figure}[ht]
\begin{center}
\includegraphics[width=0.4\textwidth]{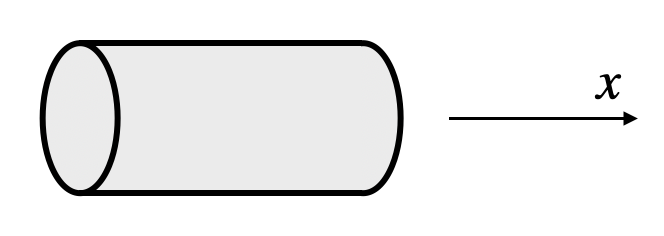}
\end{center}
\caption{
The heat conduction in one dimension.
}
\label{sec3:fig01}
\end{figure}

Let $A$ be the cross section of a thin cylinder of length $\Delta\ell$ (Fig.~\ref{sec3:fig02}). We consider the temperature change in this small volume. Let $C_V$ be the volumetric heat capacity.\footnote{
Using the SI, units of $u,J,\kappa,C_V$ are ${\rm K}$, ${\rm W}/{\rm m}^2$, ${\rm W}/({\rm m}\cdot{\rm K})$, and ${\rm J}/({\rm K}\cdot {\rm m}^3)$, respectively.
}
The energy balance in the small volume can be expressed as 
\be
C_Vu(x,t+\Delta t)A\Delta\ell-C_Vu(x,t)A\Delta\ell=
J(x,t)A\Delta t-J(x+\Delta\ell,t)A\Delta t,
\ee
where the left-hand side shows the energy change, the first and second terms on the right-hand side mean the gain and loss, respectively. By dividing both sides by $A\Delta\ell\Delta t$,
\be
C_V\frac{u(x,t+\Delta t)-u(x,t)}{\Delta t}=
\frac{-J(x+\Delta\ell,t)+J(x,t)}{\Delta\ell}.
\ee
With the help of the Fourier law,
\be
C_V\frac{u(x,t+\Delta t)-u(x,t)}{\Delta t}=
\frac{\kappa(x+\Delta\ell)\left.\pp_yu(y,t)\right|_{y=x+\Delta\ell}-\kappa(x)\left.\pp_yu(y,t)\right|_{y=x}}{\Delta\ell}.
\ee

\begin{figure}[ht]
\begin{center}
\includegraphics[width=0.4\textwidth]{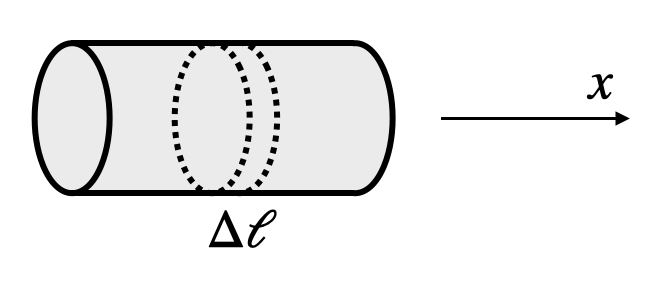}
\end{center}
\caption{
The heat conduction and the heat equation.
}
\label{sec3:fig02}
\end{figure}

In the limits of $\Delta t\to0$ and $\Delta\ell\to 0$, we obtain
\begin{equation}
\pp_tu(x,t)=\pp_xD(x)\pp_xu(x,t),
\label{sec3:de1}
\end{equation}
where $D(x)=\kappa(x)/C_V$. This equation (\ref{sec3:de1}) is called the diffusion equation and $D$ is called the diffusion coefficient. Suppose that $\kappa,D$ are constants and independent of $x$. In this case we can write $D=D_0$ with a positive constant $D_0$. Then we have
\begin{equation}
\pp_tu(x,t)=D_0\pp_x^2u(x,t).
\label{sec3:he1}
\end{equation}
The equation (\ref{sec3:he1}) is said to be the heat equation.

Let us introduce dimensionless variables $x^*,t^*$ as
\be
x^*=\frac{x}{\ell},\quad t^*=\frac{t}{\ell^2/D_0},
\ee
where $\ell>0$ is a typical length for the heat diffusion. Then the heat equation (\ref{sec3:he1}) becomes
\be
\pp_{t^*}u^*(x^*,t^*)=\pp_{x^*}^2u^*(x^*,t^*),
\ee
where $u^*(x^*,t^*)=u(x,t)$. Hereafter in this section we drop the superscript $*$ and consider $\pp_tu(x,t)=\pp_x^2u(x,t)$.

\subsection{Solutions}
\hfill\vskip1mm

Let us consider the following equation:
\begin{equation}
\left\{\begin{aligned}
\pp_tu(x,t)=\pp_x^2u(x,t),&\quad 0<x<\pi,\quad 0<t<T,
\\
u(0,t)=u(\pi,t)=0,&\quad 0<t<T,
\\
u(x,0)=f(x),&\quad 0<x<\pi.
\end{aligned}\right.
\label{sec3:heq2}
\end{equation}
Here, $f$ is the distribution of temperature at $t=0$. We suppose that $f$ is piecewise smooth and $f(0)=f(\pi)=0$. We solve the problem by separation of variables. Let us write
\be
u(x,t)=\phi(x)\tau(t),\quad u\not\equiv0.
\ee
Then we have
\be
\frac{1}{\tau(t)}\frac{d\tau}{dt}(t)=\frac{1}{\phi(x)}\frac{d^2\phi}{dx^2}(x).
\ee
By using a constant $p$, we can separate the above equation as
\be
\frac{d^2\phi}{dx^2}(x)-p\phi(x)=0,\quad
\frac{d\tau}{dt}(t)-p\tau(t)=0.
\ee
We note the boundary condition, $\phi(0)=\phi(\pi)=0$. We end up with $u\equiv0$ when $p$ is nonnegative. To obtain $u\not\equiv0$, we set $p=-\mu^2$ with a positive constant $\mu$. From the equation for $\phi$, we obtain
\be
\phi(x)=A\cos{\mu x}+B\sin{\mu x}.
\ee
By the boundary condition we obtain $A=0$, $B\sin{\mu \pi}=0$. Since $u\not\equiv0$, $B\neq0$. Hence,
\be
\mu=\mu_n=n,\quad n=1,2,\dots.
\ee
Although $B$ is an arbitrary constant, let us impose $\int_0^{\pi}\phi(x)^2\,dx=1$ and fix $B$. Then we obtain
\be
\phi(x)=\phi_n(x)=\sqrt{\frac{2}{\pi}}\sin{nx},\quad n\in\Nm.
\ee
We note that $-\mu_n^2$ are eigenvalues and $\phi_n(x)$ are eigenfunctions:
\be
\frac{d^2}{dx^2}\phi_n(x)=-\mu_n^2\phi_n(x),\quad n\in\Nm.
\ee
The eigenfunctions $\{\phi_n\}$ form an orthonormal basis in $L^2(0,\pi)$:\footnote{
The completeness can also be shown (the Sturm-Liouville theory) \cite{Yosida78}. That is, for $f\in L^2(0,\pi)$,
\[
\int_0^{\pi}f(x)\phi_n(x)\,dx=0\quad(\forall n\in\Nm)\quad\Rightarrow\quad
f=0.
\]
}
\be
\int_0^{\pi}\phi_n(x)\phi_m(x)\,dx=\left\langle\phi_n,\phi_m\right\rangle
=\delta_{nm},
\ee
where $\delta_{nm}$ is the Kronecker delta ($\delta_{nm}=1$ for $n=m$ and $\delta_{nm}=0$ otherwise).

We obtain the general solution for $\tau(t)$ as
\be
\tau(t)=\tau_n(t)=C_ne^{-n^2t},\quad n\in\Nm,
\ee
where $C_n$ are constants. Hence, $u(x,t)$ can be expressed as
\be
u(x,t)=\sum_{n=1}^{\infty}C_ne^{-n^2t}\phi_n(x).
\ee
Since
\be
u(x,0)=\sum_{n=1}^{\infty}C_n\phi_n(x)=f(x),\quad 0<x<\pi,
\ee
we obtain
\be
C_n=\int_0^{\pi}f(x)\phi_n(x)\,dx,\quad n=1,2,\dots.
\ee
Finally, the solution to the heat equation (\ref{sec3:heq2}) is obtained as
\begin{equation}
u(x,t)=\sum_{n=1}^{\infty}\left(\int_0^{\pi}f(x)\phi_n(x)\,dx\right)
e^{-n^2t}\phi_n(x).
\label{sec3:heqsol}
\end{equation}

\subsection{Smoothing effect and infinite propagation speed}
\hfill\vskip1mm

Equation (\ref{sec3:heqsol}) implies that $u(x,t_0)$ is infinitely differentiable for any $t_0>0$ even if the initial value $f\in L^{\infty}(0,\pi)$ is not continuous but has discontinuous points. This is called the smoothing effect of the heat equation.

The solution $u$ in (\ref{sec3:heqsol}) and its derivatives converge if the following $s_k,c_k$ converge for $k=0,1,2,\dots$.
\be
s_k=\sum_{n=1}^{\infty}C_nn^ke^{-n^2t_0}\sqrt{\frac{2}{\pi}}\sin{nx},\quad
c_k=\sum_{n=1}^{\infty}D_nn^ke^{-n^2t_0}\sqrt{\frac{2}{\pi}}\cos{nx},
\ee
where $D_n$ are constants. Since $|C_n|\le\sqrt{2/\pi}\int_0^{\pi}|f(x)|\,dx$, we have
\be
|s_k|,|c_k|\le\left(\frac{2}{\pi}\int_0^{\pi}|f(x)|\,dx\right)n^ke^{-n^2t_0},
\quad 0\le x\le\pi.
\ee
By d'Alembert's ratio test,\footnote{
A series $\sum_{n=1}^{\infty}a_n$ converges absolutely if $\lim_{n\to\infty}|a_{n+1}/a_n|<1$.
} $\sum_{n=1}^{\infty}n^ke^{-n^2t_0}$ converges absolutely. Hence we differentiate $u(x,t_0)$ $k$ times.\footnote{
We note the following two lemmas. Lemma 1 ($M$-test): Let $\{f_n\}$ be a sequence of functions in $a\le x\le b$, $c\le t\le d$. If the series $\sum_{n=1}^{\infty}a_n$ converges for $|f_n(x,t)|\le a_n$ ($x\in[a,b]$, $t\in[c,d]$, $n\in\Nm$), then $\sum_{n=1}^{\infty}f_n$ converges absolutely and uniformly in $[a,b]\times[c,d]$. Lemma 2 (interchanging differentiation and integration): Let $\{f_n\}$ be a differentiable sequence of functions in $x\in[a,b]$, $t\in[c,d]$. We suppose that $\pp_xf_n$, $\pp_tf_n$ are continuous in $Q=[a,b]\times[c,d]$. We further assume that $f_n$ converges to $f$ pointwise for each $(x,t)\in Q$, and $\pp_xf_n$, $\pp_tf_n$ converge to $g,h$ uniformly in $Q$. Then $f_n$ converges to $f$ uniformly in $Q$. Moreover, $\lim_{n\to\infty}\pp_xf_n=\pp_x(\lim_{n\to\infty}f_n)$, $\lim_{n\to\infty}\pp_tf_n=\pp_t(\lim_{n\to\infty}f_n)$, that is, $\pp_xf=g$, $\pp_tf=h$.
} Since $k$ is arbitrary, we can conclude that $u(x,t_0)$ is infinitely differentiable.

If $t_0>0$ is small, $u(x,t_0)$ and $f(x)$ should be similar. Still, $u$ is continuous (infinitely differentiable) even when $f$ is not continuous. This means that heat propagates at an infinite speed. This is said to be the infinite propagation speed for the heat equation. Of course, this is a property of the partial differential equation and heat conduction as a natural phenomenon has a finite speed.

\begin{figure}[ht]
\begin{center}
\includegraphics[width=0.4\textwidth]{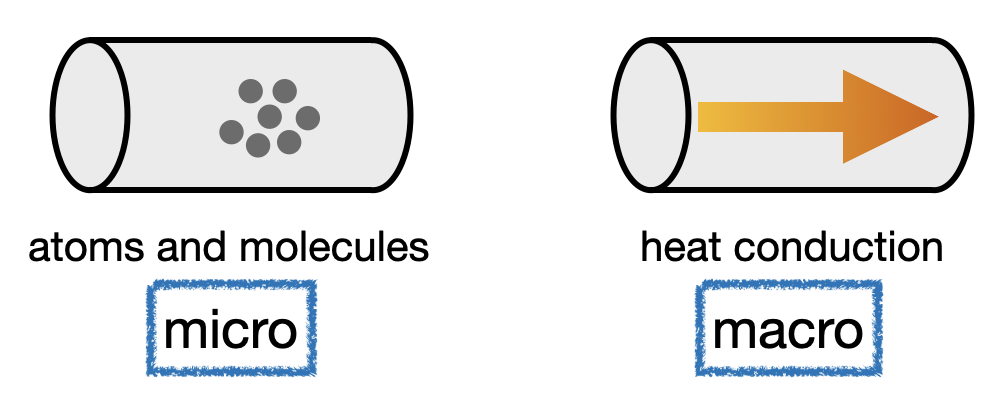}
\end{center}
\caption{
The heat equation governs heat conduction on a macroscopic scale.
}
\label{sec3:fig03}
\end{figure}

Nature has hierarchy; every differential equation that describes a natural phenomenon has its own domain of applicability. When the heat equation is used as a governing equation for heat conduction, it is implicitly assumed that $x,t$ are at least reasonably large. The heat equation does not describe phenomena at the atomic and molecular scale (see Fig.~\ref{sec3:fig03}).

As mathematics, it is possible to assume any small $t_0>0$. When $t_0$ is very small, however, the relation of the heat equation to natural phenomena is lost.

\section{Singular value decomposition}
\label{sec4}

\subsection{Matrix decomposition}
\label{sec4:mat}
\hfill\vskip1mm

Let us consider $A\in\Rm^{m\times n}$ for integers $m,n$. By the singular value decomposition (SVD), the matrix $A$ can be expressed as
\be
A=UDV^T,
\ee
where $U\in\Rm^{m\times m}$ and $V\in\Rm^{n\times n}$ are orthogonal matrices, and the rectangular diagonal matrix $D\in\Rm^{m\times n}$ contains singular values $\mu_j\ge0$ ($j=1,\dots,\min(m,n)$). We label singular values as $\mu_1\ge\cdots\ge\mu_{\min(m,n)}$. Let $\lambda_j$ ($j=1,\dots,\min(m,n)$) ($\lambda_1\ge\cdots\ge\lambda_{\min(m,n)}\ge0$) be the eigenvalues of $A^TA$ ($m\ge n$) or $AA^T$ ($m<n$). We have the relation $\mu_j=\sqrt{\lambda_j}$ ($j=1,\dots,\min(m,n)$). We write
\be
U=(\cdots\psi_i\cdots),\quad V=(\cdots \va_k\cdots),
\ee
where $\psi_i\in\Rm^m$ ($i=1,\dots,m$), $\va_k\in\Rm^n$ ($k=1,\dots,n$).

\subsection{Linear operator}
\hfill\vskip1mm

Let $H_1,H_2$ be real Hilbert spaces \cite{Yamamoto-Kim08}. We consider operator $K\colon H_1\to H_2$ which linear, one-to-one, and compact. We write the conjugate operator of $K$ as $K^*\colon H_2\to H_1$. We have $\langle Kf,g\rangle=\langle f,K^*g\rangle$ ($f\in H_1$, $g\in H_2$). Since $(K^*K)^*=K^*(K^*)^*=K^*K$, we see that $K^*K$ is a compact self-adjoint operator.\footnote{
We note that $(AB)^*=B^*A^*$, $(A^*)^*=A$.
} We have
\be
K^*K\xi=0\quad\Rightarrow\quad
\langle K^*K\xi,\xi\rangle=\langle K\xi,(K^*)^*\xi\rangle=\langle K\xi,K\xi\rangle=\|K\xi\|_{H_2}^2=0
\ee
for $\xi\in H_1$. Since $K$ is one-to-one, this means $\xi=0$. Hence, $K^*K$ is also one-to-one.

Let us write
\begin{equation}
K^*K\phi_n=\lambda_n\phi_n,\quad\phi_n\neq0,
\label{KstaKeigen}
\end{equation}
where $\phi_n,\lambda_n$ are eigenfunctions and eigenvalues of $K^*K$. We assume that $\{\phi_n\}$ form an orthonormal basis in $H_1$. We have
\be
\langle K^*K\phi_n,\phi_n\rangle=\lambda_n\|\phi_n\|_{H_1}^2.
\ee
Since the left-hand side is $\|K\phi\|_{H_2}^2$, we have
\be
\lambda_n=\frac{\|K\phi_n\|_{H_2}^2}{\|\phi_n\|_{H_1}^2}>0.
\ee
Thus, $\lambda_n$ are positive. Let us arrange them in descending order:
\be
\lambda_1\ge\lambda_2\ge\cdots.
\ee

Suppose that $\langle\xi,\phi_n\rangle=0$ ($n\in\Nm$) for some $\xi\in H_1$. Then $K^*K\xi=0$ because
\be
K^*K\xi=\sum_{n=1}^{\infty}\lambda_n\langle\xi,\phi_n\rangle\phi_n.
\ee
Since $K^*K$ is one-to-one, we have $\xi=0$. This implies that $\{\phi_n\}$ forms an orthonormal basis in $H_1$.

Let us write
\be
\psi_n=\frac{1}{\sigma_n}K\phi_n,\quad\sigma_n=\sqrt{\lambda_n},\quad n\in\Nm.
\ee
These $\sigma_n$ are called singular values:
\be
\sigma_1\ge\sigma_2\ge\cdots.
\ee
Equation (\ref{KstaKeigen}) implies
\be
K^*\psi_n=\sigma_n\phi_n,\quad K\phi_n=\sigma_n\psi_n.
%\label{SVDforK}
\ee
These $\{\psi_n\}$ are orthonormal:
\be
\langle\psi_n,\psi_m\rangle=
\frac{1}{\sigma_n\sigma_m}\langle K\phi_n,K\phi_m\rangle=
\frac{1}{\sigma_n\sigma_m}\langle K^*K\phi_n,\phi_m\rangle=
\frac{\sigma_n^2}{\sigma_n\sigma_m}\langle\phi_n,\phi_m\rangle=\delta_{nm}.
\ee
We note that in general $\{\psi_n\}$ is not a basis.

Let us consider the inverse problem of $g\to f$ such that
\be
Kf=g,\quad f\in H_1,\quad g\in H_2.
\ee
Since $\{\phi_n\}$ forms an orthonormal basis in $H_1$, we can write
\be
f=\sum_{n=1}^{\infty}\langle f,\phi_n\rangle\phi_n.
\ee
Hence,
\be
g=Kf=\sum_{n=1}^{\infty}\langle f,\phi_n\rangle K\phi_n=
\sum_{n=1}^{\infty}\sigma_n\langle f,\phi_n\rangle \psi_n.
\ee
The expression on the rightmost-hand side is said to be the singular value decomposition (SVD) of $K$ \cite{Engl-Hanke-Neubauer00}.

We note that
\be
\langle g,\psi_m\rangle=
\left\langle Kf,\frac{1}{\sigma_m}K\phi_m\right\rangle=
\left\langle f,\frac{1}{\sigma_m}K^*K\phi_m\right\rangle=
\sigma_m\langle f,\phi_m\rangle.
\ee
Hence,
\begin{equation}
f=\sum_{n=1}^{\infty}\frac{1}{\sigma_n}\langle g,\psi_n\rangle\phi_n.
\label{sec4:fsol1}
\end{equation}
Thus, the inverse problem to obtain $f$ was solved ($f=K^{-1}g$) at least formally.

Indeed, the solution (\ref{sec4:fsol1}) requires some care for ill-posed inverse problems. Below, we will see this situation by using an inverse problem for the heat equation.

\subsection{SVD for the heat equation}
\hfill\vskip1mm

In general, $K^{-1}$ is not continuous and $f^{\varepsilon}$ from noisy measurements is not necessarily close to $f$ even if measurement error $\delta$ is small.

Let us consider the inverse problem of finding the initial temperature distribution $f$ for the heat equation (\ref{sec3:heq2}). Let us set $H_1=H_2=L^2(0,\pi)$. We observe $g(x)=u(x,T)$. The linear operator $K$ is given by (see (\ref{sec3:heqsol})):
\begin{equation}
(Kf)(x)=\int_0^{\pi}
\left(\sum_{n=1}^{\infty}\phi_n(x)\phi_n(y)e^{-n^2T}\right)f(y)\,dy,\quad f\in L^2(0,\pi).
\label{sec4:opK}
\end{equation}

We note that $K=K^*$. Using the expression (\ref{sec4:opK}), we have
\be
\begin{aligned}
&
(K^*K\phi_n)(x)
\\
&=
\int_0^{\pi}
\left(\sum_{m=1}^{\infty}\phi_m(x)\phi_m(y)e^{-m^2T}\right)\left[\int_0^{\pi}\left(
\sum_{l=1}^{\infty}\phi_l(y)\phi_l(z)e^{-l^2T}\right)\phi_n(z)\,dz
\right]\,dy
\\
&=
\int_0^{\pi}
\left(\sum_{m=1}^{\infty}\phi_m(x)\phi_m(y)e^{-m^2T}\right)\left[\phi_n(y)e^{-n^2T}\right]\,dy
\\
&=
e^{-2n^2T}\phi_n(x).
\end{aligned}
\ee
Hence, eigenvalues and singular values are obtained as
\be
\lambda_n=e^{-2n^2T},\quad\sigma_n=e^{-n^2T}.
\ee
Moreover,
\be
\phi_n=\psi_n=\sqrt{\frac{2}{\pi}}\sin{nx}.
\ee
Using (\ref{sec4:fsol1}),
\begin{equation}
f(x)=\frac{2}{\pi}\sum_{n=1}^{\infty}e^{n^2T}\left(\int_0^{\pi}g(y)\sin{ny}\,dy\right)\sin{nx}.
\label{sec4:fKinvg}
\end{equation}

Measurement error is unavoidable. Although it is a bit artificial, let us assume the measurement error as
\be
g^{\delta}=g+\delta,
\ee
where $\delta=\sqrt{\sigma_m}\psi_m$ for some $m$. That is,
\be
\begin{aligned}
g^{\delta}(x)-g(x)
&=
\sqrt{\sigma_m}\psi_m(x)
\\
&=
\sqrt{\frac{2}{\pi}}e^{-\frac{1}{2}m^2T}\sin{mx}.
\end{aligned}
\ee
Since $\lim_{m\to\infty}\|\sqrt{\sigma_m}\psi_m\|_{H_2}=\lim_{m\to\infty}\sqrt{\sigma_m}=0$, error becomes small for large $m$. Since $|\sin{mx}|\le1$, $g^{\delta}\to g$ uniformly as $m\to\infty$ ($x\in[0,\pi]$).

Using (\ref{sec4:fsol1}), we have for the solution $f^{\delta}$ to the inverse problem with $g^{\delta}$,
\be
\begin{aligned}
f^{\delta}(x)-f(x)
&=
\frac{1}{\sqrt{\sigma_m}}\phi_m(x)
\\
&=
\sqrt{\frac{2}{\pi}}e^{\frac{1}{2}m^2T}\sin{mx}.
\end{aligned}
\ee
Then we obtain
\ba
\lim_{m\to\infty}\|f^{\delta}-f\|_{L^2(0,\pi)}
&=\lim_{m\to\infty}\frac{1}{\sqrt{\sigma_m}}=\infty,
\\
\lim_{m\to\infty}\max_{0\le x\le\pi}|f^{\delta}(x)-f(x)|
&=
\lim_{m\to\infty}\sqrt{\frac{2}{\pi}}e^{\frac{1}{2}m^2T}=\infty.
\ea
We see that a small measurement error $g^{\delta}-g$ causes a large error $f^{\delta}-f$ for the solution of the inverse problem. Thus, the error of the solution gets large even when the measurement error is very small. 

The solution $f$ should exist because $g^{\delta}$ is $Kf$ plus some measurement error. However, the the solution $f$ cannot be obtained. To remedy this situation, we approximate the unstable inverse operator $K^{-1}$ to another stable operator $R$ and obtain an approximate solution $f^{\delta}$ instead of $f$. The remedy of stabilizing $K^{-1}$ is called regularization.

\section{Truncated SVD}
\label{sec5}

\subsection{Regularization}
\hfill\vskip1mm

We have found that the error $f^{\delta}-f$ becomes large for large $m$ (i.e., when high frequencies are taken into account). Here we consider regularization by ignoring high frequencies. That is, we make a low-pass filter.

Let $\alpha>0$ be a constant. Let us consider the inverse problem of $Kf=g$. We define $R_{\alpha}\colon H_2\to H_1$ as
\begin{equation}
R_{\alpha}g=\sum_{n=1\atop \sigma_n^2\ge\alpha}^{\infty}\frac{1}{\sigma_n}\langle g,\psi_n\rangle\phi_n.
\label{sec5:ftrunc}
\end{equation}
The definition (\ref{sec5:ftrunc}) can be compared to (\ref{sec4:fsol1}). The remedy which was done in (\ref{sec5:ftrunc}) is called the regularization by truncated SVD. The parameter $\alpha>0$ is said to be the regularization parameter.

For arbitrary $\xi\in H_1$,
\begin{equation}
\|R_{\alpha}K\xi-\xi\|_{H_1}\to0,\quad\alpha\to0.
\label{sec5:hzero}
\end{equation}
That is, the role of the operator $R_{\alpha}$ is similar to $K^{-1}$. Suppose $\|g^{\delta}-g\|_{H_2}\le\delta$, where the constant $\delta>0$ is assumed to be known. By choosing a suitable $\alpha>0$, we wish to obtain an approximate solution
\be
f^{\delta,\alpha}=R_{\alpha}g^{\delta}.
\ee
It can be expected that $\|f^{\delta,\alpha}-f\|_{H_1}$ is small for small $\delta>0$.

We observe that
\be
\begin{aligned}
\|f^{\delta,\alpha}-R_{\alpha}g\|_{H_1}
&=
\|R_{\alpha}g^{\delta}-R_{\alpha}g\|_{H_1}=
\left\|\sum_{n=1\atop\sigma_n^2\ge\alpha}^{\infty}\frac{1}{\sigma_n}\langle g^{\delta}-g,\psi_n\rangle\phi_n\right\|_{H_1}
\\
&\le
\frac{1}{\sqrt{\alpha}}\left\|\sum_{n=1\atop\sigma_n^2\ge\alpha}^{\infty}\langle g^{\delta}-g,\psi_n\rangle\phi_n\right\|_{H_1}
=
\frac{1}{\sqrt{\alpha}}\left(\sum_{n=1\atop\sigma_n^2\ge\alpha}^{\infty}\left|\langle g^{\delta}-g,\psi_n\rangle\right|^2\right)^{1/2}
\\
&\le
\frac{1}{\sqrt{\alpha}}\|g^{\delta}-g\|_{H_2}
\le
\frac{\delta}{\sqrt{\alpha}},
\end{aligned}
\ee
where we used the Bessel inequality. That is, $\|f^{\delta,\alpha}-R_{\alpha}g\|_{H_1}\le\delta/\sqrt{\alpha}$. We note that $\|f^{\delta,\alpha}-f\|_{H_1}\le\|f^{\delta,\alpha}-R_{\alpha}g\|_{H_1}+\|R_{\alpha}g-f\|_{H_1}$ due to the triangle inequality. Hence,
\begin{equation}
\|f^{\delta,\alpha}-f\|_{H_1}\le
\frac{\delta}{\sqrt{\alpha}}+\|R_{\alpha}Kf-f\|_{H_1}.
\label{sec5:finequal}
\end{equation}
For an arbitrary fixed $\delta>0$, the first term on the right-hand side becomes large but the second term on the right-hand side tends to $0$ as $\alpha\to0$. To make the left-hand side small, a suitably small $\alpha$ needs to be chosen depending on $\delta$.

\begin{thm}
If $\alpha(\delta)>0$ is chosen such that $\lim_{\delta\to0}\alpha(\delta)=0$ and $\lim_{\delta\to0}\delta/\sqrt{\alpha(\delta)}=0$, then
\be
\lim_{\delta\to0}\|R_{\alpha(\delta)}g^{\delta}-f\|_{H_1}=0.
\ee
\end{thm}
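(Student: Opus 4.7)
The plan is to deduce the theorem directly from the a priori bound (\ref{sec5:finequal}) that is already established in the excerpt. Setting $\alpha=\alpha(\delta)$, that inequality reads
\[
\|R_{\alpha(\delta)}g^{\delta}-f\|_{H_1}
\;\le\;\frac{\delta}{\sqrt{\alpha(\delta)}}+\|R_{\alpha(\delta)}Kf-f\|_{H_1},
\]
so it suffices to send both terms on the right to $0$ as $\delta\to 0$. The first term vanishes by the standing hypothesis $\delta/\sqrt{\alpha(\delta)}\to 0$. The whole content of the proof is therefore to verify (\ref{sec5:hzero}), namely that $\|R_{\alpha}Kf-f\|_{H_1}\to 0$ as $\alpha\to 0$, and then to apply it along the sequence $\alpha=\alpha(\delta)$, which is allowed because $\alpha(\delta)\to 0$ by assumption.

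To establish (\ref{sec5:hzero}), I would first compute $R_{\alpha}Kf$ using the SVD structure. The identity $\langle Kf,\psi_n\rangle=\sigma_n\langle f,\phi_n\rangle$ (derived earlier in the excerpt) together with the definition (\ref{sec5:ftrunc}) of $R_{\alpha}$ gives
\[
R_{\alpha}Kf=\sum_{n:\,\sigma_n^2\ge\alpha}\langle f,\phi_n\rangle\phi_n.
\]
Since $\{\phi_n\}$ is an orthonormal basis of $H_1$, Parseval's identity yields $f=\sum_{n=1}^{\infty}\langle f,\phi_n\rangle\phi_n$, and therefore
\[
\|R_{\alpha}Kf-f\|_{H_1}^2=\sum_{n:\,\sigma_n^2<\alpha}|\langle f,\phi_n\rangle|^2.
\]
This is the tail of a convergent Parseval sum, indexed by the set $\{n:\sigma_n^2<\alpha\}$.

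The key step is to argue that this tail vanishes in the limit. Because $K$ (and hence $K^*K$) is compact and the $\sigma_n$ are arranged in decreasing order, $\sigma_n\to 0$; equivalently, for each fixed $N\in\Nm$ there exists $\alpha_N>0$ such that $\sigma_n^2\ge\alpha$ for all $n\le N$ whenever $\alpha\le\alpha_N$. Consequently $\{n:\sigma_n^2<\alpha\}\subset\{n>N\}$ for all $\alpha\le\alpha_N$, so
\[
\|R_{\alpha}Kf-f\|_{H_1}^2\le\sum_{n>N}|\langle f,\phi_n\rangle|^2.
\]
Given $\varepsilon>0$, choose $N$ so that this upper bound is less than $\varepsilon$ (possible since $\sum_n|\langle f,\phi_n\rangle|^2=\|f\|_{H_1}^2<\infty$); then for all $\alpha\le\alpha_N$, $\|R_{\alpha}Kf-f\|_{H_1}<\sqrt{\varepsilon}$. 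This proves (\ref{sec5:hzero}).

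The main obstacle in the argument is precisely the last paragraph: the convergence (\ref{sec5:hzero}) is asserted in the excerpt without justification, and in general $K^{-1}$ is unbounded so there is no uniform estimate on $R_{\alpha}Kf-f$. The proof must exploit compactness of $K$ to localize the failure of convergence to the high-frequency tail, and then use Parseval to control that tail. Once this is done, the theorem follows immediately by combining the two vanishing terms in (\ref{sec5:finequal}).
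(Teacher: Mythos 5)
Your proof is correct and follows essentially the same route as the paper: the paper's own proof consists precisely of your first paragraph, namely combining the bound (\ref{sec5:finequal}) with the convergence (\ref{sec5:hzero}) along $\alpha=\alpha(\delta)$. You go further by actually proving (\ref{sec5:hzero}) — which the paper asserts without justification — via the identity $R_{\alpha}Kf=\sum_{\sigma_n^2\ge\alpha}\langle f,\phi_n\rangle\phi_n$ and a Parseval tail estimate, and that supplementary argument is sound (it really only needs $\sigma_N>0$ for each $N$, which holds since $K$ is injective, rather than the full strength of $\sigma_n\to0$).
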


\begin{proof}
From (\ref{sec5:hzero}), we see that the second term on the right-hand side of (\ref{sec5:finequal}) goes to $0$ as $\delta\to0$. That is, both the first and second terms on the right-hand side of (\ref{sec5:finequal}) go to $0$ as $\delta\to0$.
\end{proof}

\subsection{Inverse problem for the heat equation with truncated SVD}
\hfill\vskip1mm

We assume that $f(x)=x(\pi-x)$ at $t=0$. Suppose we observe $g(x)=u(x,T)$ (we set $T=1$). Let us obtain $f^{\alpha}$ by solving the inverse problem of $Kf=g$ with the truncated SVD.

According to (\ref{sec4:opK}), we have
\be
\begin{aligned}
g(x)&=
u(x,T)=(Kf)(x)
\\
&=\int_0^{\pi}
\left(\sum_{n=1}^{\infty}\phi_n(x)\phi_n(y)\sigma_n\right)y(\pi-y)\,dy
\\
&=
4\sqrt{\frac{2}{\pi}}\sum_{m=1}^{\infty}\frac{\sigma_{2m-1}}{(2m-1)^3}\phi_{2m-1}(x)
\\
&=
\frac{8}{\pi}\sum_{m=1}^{\infty}\frac{e^{-(2m-1)^2}}{(2m-1)^3}\sin{[(2m-1)x]},
\end{aligned}
\ee
where $\sigma_n=\exp(-n^2)$. Let us write $f^{\alpha}=R_{\alpha}g$. We have (see (\ref{sec5:ftrunc}))
\be
f^{\alpha}(x)=\frac{8}{\pi}\sum_{m=1}^M\frac{\sin{(2m-1)x}}{(2m-1)^3},
\ee
where $M$ is the largest integer such that $\exp(-(2M-1)^2)\ge\sqrt{\alpha}$.\footnote{
We obtain
\be
M=\left\lfloor\frac{1}{2}\left(\sqrt{\frac{-1}{2}\ln{\alpha}}+1\right)\right\rfloor,
\ee
where $\lfloor x\rfloor=[x]=\max\{n\in\Zm;\;n\le x\}$.
}

Numerical results are shown in Fig.~\ref{sec5:fig01}. Since no noise is assumed for $g$, we see that smaller $\alpha>0$ gives a better reconstruction. In this case, the same model is used both for the forward and inverse problems. Such a situation is called the inverse crime.

\begin{figure}[ht]
\begin{center}
\includegraphics[width=0.4\textwidth]{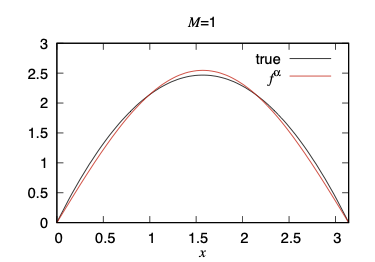}
\includegraphics[width=0.4\textwidth]{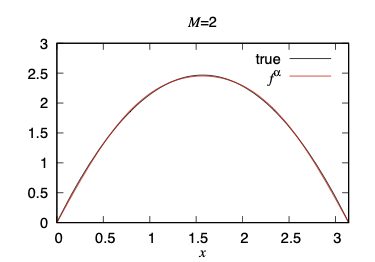}
\end{center}
\caption{
In the case that no measurement noise is assumed. The regularization parameter $\alpha$ is set to $0.1$ on the left panel and $10^{-8}$ on the right panel.
}
\label{sec5:fig01}
\end{figure}

To avoid inverse crime, we add $3\%$ noise:
\be
g^{\delta}(x)=g(x)(1+X),\quad X\sim \mathcal{N}(0,0.03^2),
\ee
where $\mathcal{N}({\rm MEAN},{\rm VAR})$ is the Gaussian distribution with mean ${\rm MEAN}$ and variance ${\rm VAR}$, and a random variable $X$ is normally distributed with mean $0$ and standard deviation $0.03$. Now, the data contains noise as shown in Fig.~\ref{sec5:fig02}.

\begin{figure}[ht]
\begin{center}
\includegraphics[width=0.4\textwidth]{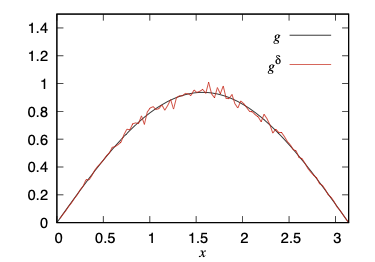}
\end{center}
\caption{
The forward data with $3\%$ Gaussian noise.
}
\label{sec5:fig02}
\end{figure}

We have
\be
f^{\delta,\alpha}=
\sum_{n=1\atop\sigma_n^2\ge\alpha}^N\frac{1}{\sigma_n}\langle g^{\delta},\psi_n\rangle\phi_n=
\frac{2}{\pi}\sum_{n=1\atop\sigma_n^2\ge\alpha}^Ne^{n^2}\left(
\int_0^{\pi}g^{\delta}(y)\sin{ny}\,dy
\right)\sin{nx},
\ee
where the integral on the right-hand side can be numerically evaluated, for example, by the trapezoidal rule. The first a few singular values are obtained as
\begin{equation}
\sigma_1^2=0.14,\quad
\sigma_2^2=3.4\times10^{-4},\quad\sigma_3^2=1.5\times10^{-8},\quad
\sigma_4^2=1.3\times10^{-14}.
\label{sec5:singularvals}
\end{equation}

Reconstructed results are shown in Figs.~\ref{sec5:fig03} and \ref{sec5:fig04}. This time, the reconstruction is nonsense if $\alpha$ is too small ($\alpha=10^{-8}$).

\begin{figure}[ht]
\begin{center}
\includegraphics[width=0.4\textwidth]{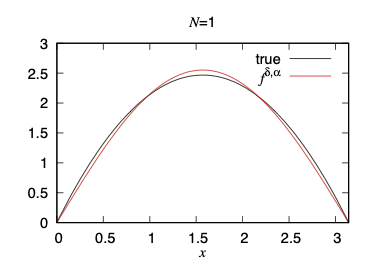}
\includegraphics[width=0.4\textwidth]{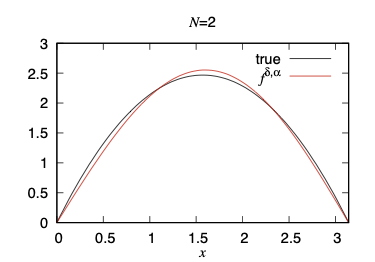}
\end{center}
\caption{
In the case that $3\%$ measurement noise is assumed. The regularization parameter $\alpha$ is set to $0.1$ on the left panel and $10^{-4}$ on the right panel.
}
\label{sec5:fig03}
\end{figure}

\begin{figure}[ht]
\begin{center}
\includegraphics[width=0.4\textwidth]{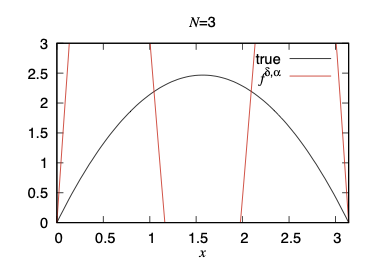}
\end{center}
\caption{
In the case that $3\%$ measurement noise is assumed. The regularization parameter $\alpha$ is set to $10^{-8}$.
}
\label{sec5:fig04}
\end{figure}

\section{Tikhonov regularization}
\label{sec6}

\subsection{Regularized Moore-Penrose pseudoinverse}
\hfill\vskip1mm

Instead of removing small singular values, we here modify small singular values, so all singular values do not exceed a certain threshold.

We will make use of SVD in Sec.~\ref{sec4:mat}. Let $E\in\Rm^{n\times n}$ be the identity. We consider the inverse problem of $Ax=b$, where $A\in\Rm^{m\times n}$, $b\in\Rm^m$, and $x\in\Rm^n$. Let us suppose $m\ge n$ (a similar calculation is possible for $m<n$).

\begin{thm}
The matrix $\alpha^2E+A^TA$ is invertible ($\alpha>0$).
\end{thm}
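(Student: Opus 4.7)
The plan is to show that $\alpha^2 E + A^T A$ is symmetric and positive definite, which forces invertibility (a symmetric matrix with strictly positive eigenvalues cannot have a nontrivial kernel). Since the section just above the statement reviews the SVD in Sec.~\ref{sec4:mat}, I would present two short arguments and let either one serve as the proof.

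First approach (direct, via a quadratic form). The matrix $\alpha^2 E + A^T A$ is symmetric because both $E$ and $A^T A$ are. For an arbitrary $x\in\Rm^n$ I would compute
\[
x^T\bigl(\alpha^2 E + A^T A\bigr)x \;=\; \alpha^2\|x\|^2 + \|Ax\|^2 \;\ge\; \alpha^2\|x\|^2.
\]
For $\alpha>0$ and $x\ne 0$ the right-hand side is strictly positive, so the matrix is positive definite; in particular its kernel is trivial, and a square matrix with trivial kernel is invertible.

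Second approach (via SVD, matching the style of Sec.~\ref{sec4:mat}). Writing $A=UDV^T$ with $U\in\Rm^{m\times m}$, $V\in\Rm^{n\times n}$ orthogonal and $D\in\Rm^{m\times n}$ diagonal with entries $\mu_j\ge 0$, one gets $A^T A = V D^T D\, V^T$, where $D^T D\in\Rm^{n\times n}$ is diagonal with entries $\mu_j^2$. Since $V^T V = E$,
\[
\alpha^2 E + A^T A \;=\; V\bigl(\alpha^2 E + D^T D\bigr)V^T,
\]
and the middle factor is diagonal with entries $\alpha^2+\mu_j^2>0$. Its inverse is diagonal with entries $1/(\alpha^2+\mu_j^2)$, so $\alpha^2 E + A^T A$ is invertible with inverse $V\,\mathrm{diag}\bigl(1/(\alpha^2+\mu_j^2)\bigr)V^T$.

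There is essentially no obstacle here; the only point that might deserve a line of comment is the symmetry/positive-definiteness chain (symmetric $+$ strictly positive quadratic form $\Rightarrow$ all eigenvalues positive $\Rightarrow$ nonzero determinant). The SVD version has the mild bonus of producing the inverse explicitly, which will be useful in the forthcoming discussion of Tikhonov regularization, so I would likely write the proof in the quadratic-form style for brevity and then remark that the SVD makes the eigenvalues $\alpha^2+\mu_j^2$ visible.
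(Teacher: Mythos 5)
Your proposal is correct, and your second (SVD) argument is essentially the paper's own proof: the paper defines the candidate inverse $T$ columnwise by $\xi_j=\sum_k\frac{\va_k^{(j)}}{\alpha+\mu_k^2}\va_k$, which is exactly $V\,\mathrm{diag}\bigl(1/(\alpha+\mu_k^2)\bigr)V^T$ applied to the standard basis, and then verifies $(\alpha E+A^TA)T=T(\alpha E+A^TA)=E$ using $A^TA=VD^TDV^T$ and $VV^T=E$. Your first argument, via the quadratic form $x^T(\alpha^2E+A^TA)x=\alpha^2\|x\|^2+\|Ax\|^2>0$, is a genuinely different and more elementary route: it needs no decomposition at all and immediately gives positive definiteness, but it only certifies existence of the inverse, whereas the SVD computation produces the inverse explicitly --- which is what the paper actually needs two lines later to derive the filtered representation (\ref{Salpha1}) of $R_\alpha$. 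So for the purposes of this section the constructive proof is the more useful one, and your plan to state the quadratic-form argument and then exhibit the eigenvalues $\alpha^2+\mu_j^2$ via the SVD covers both needs. One small point worth flagging: the theorem as stated writes $\alpha^2E+A^TA$, while the paper's proof (and the subsequent definition of $R_\alpha$ in (\ref{sec6:RalTikh})) works with $\alpha E+A^TA$; since $\alpha>0$ the two claims are interchangeable, but you should pick one normalization and keep it consistent with the filter $q(\alpha,\mu)=\mu^2/(\alpha+\mu^2)$ used afterwards.
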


\begin{proof}
Let us define $T\in\Rm^{n\times n}$ as
\be
T=\left(\cdots \xi_j\cdots\right),
\ee
where
\be
\xi_j=\sum_{k=1}^n\frac{\va_k^{(j)}}{\alpha+\mu_k^2}\va_k,
\quad j=1,\dots,n,
\ee
where $\va_k^{(j)}$ is the $j$th element of the vector $\va_k$. We have
\be
\left(\alpha E+A^TA\right)T
=\alpha T+A^TAT=(\cdots\alpha\xi_j\cdots)+(\cdots A^TA \xi_j\cdots).
\ee
Since
\be
\left\{A^TA\xi_j\right\}_i=\left\{VD^TDV^T\xi_j\right\}_i
=\sum_{k=1}^n\frac{\mu_k^2}{\alpha+\mu_k^2}\va_k^{(i)}\va_k^{(j)}
,\quad i=1,\dots,n,
\ee
we obtain
\be
\left\{\alpha\xi_j+A^TA\xi_j\right\}_i=\sum_{k=1}^n\va_k^{(i)}\va_k^{(j)}.
\ee
Hence,
\ba
\left(\alpha E+A^TA\right)T=VV^T=E
\ea
Similarly,
\be
T\left(\alpha E+A^TA\right)=E.
\ee
This means,
\be
T=\left(\alpha E+A^TA\right)^{-1}.
\ee
\end{proof}

Let us give $R_{\alpha}$ as
\begin{equation}
R_{\alpha}=\left(\alpha E+A^TA\right)^{-1}A^T.
\label{sec6:RalTikh}
\end{equation}
This regularization with $\alpha E$ is called the Tikhonov regularization. We note that this $R_{\alpha}$ is a regularized Moore-Penrose pseudoinverse. Here, the Moore-Penrose pseudoinverse $A^+$ is given by $A^+=(A^TA)^{-1}A^T=V(D^TD)^{-1}D^TU^T$.

Thus, $x^{\delta,\alpha}$ can be computed as
\be
x^{\delta,\alpha}=R_{\alpha}b^{\delta}.
\ee

Let $x^{\alpha}$ be the solution of
\begin{equation}
\alpha x^{\alpha}+A^TAx^{\alpha}=A^Tb.
\label{cond1}
\end{equation}
We can write
\ba
x^{\alpha}
&=
(\alpha E+A^TA)^{-1}A^Tb=R_{\alpha}b
\\
&=
TVD^TU^Tb
\\
&=
\sum_{k=1}^n\frac{1}{\alpha+\mu_k^2}\va_k
\sum_{i=1}^n\sum_{j=1}^n\va_k^{(i)}\va_j^{(i)}\mu_j\psi_j^Tb
\\
&=
\sum_{k=1}^n\frac{\mu_k}{\alpha+\mu_k^2}(\psi_k\cdot b)\va_k.
\ea
We obtain
\begin{equation}
x^{\alpha}=
\sum_{k=1}^n\frac{1}{\mu_k}q(\alpha,\mu_k)(\psi_k\cdot b)\va_k,
\label{Salpha1}
\end{equation}
where
\be
q(\alpha,\mu)=\frac{\mu^2}{\alpha+\mu^2}.
\ee
The weight $q(\alpha\mu)$ is called the Tikhonov filter. We note that the Tikhonov filter is close to $1$ when $\alpha$ is small compared to $\mu^2$. The filter becomes much smaller than $1$ if $\mu^2$ is significantly smaller than $\alpha$. Thus, the instability due to small singular values can be suppressed. 

We note that $0<q(\alpha,\mu)<1$ and $q(\alpha,\mu)\le\mu/(2\sqrt{\alpha})$ since
\be
\frac{1}{2\sqrt{\alpha}}-\frac{\mu}{\alpha+\mu^2}=
\frac{(\sqrt{\alpha}-\mu)^2}{2\sqrt{\alpha}(\alpha+\mu^2)}\ge0.
\ee

\begin{prop}
Suppose $\|b^{\delta}-b\|_{\ell^2}\le\delta$ for $\delta>0$. Let $\mu_{n'}>0$ be the smallest nonzero singular value. The following inequality holds.
\begin{equation}
\|x^{\delta,\alpha}-x\|_{\ell^2}\le
\min\left(\frac{1}{2\sqrt{\alpha}},\frac{1}{\mu_{n'}}\right)\delta+
\|R_{\alpha}Ax-x\|_{\ell^2}.
\label{sec6:propeq1}
\end{equation}
\end{prop}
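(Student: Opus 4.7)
The strategy is the standard split into a data--error term and an approximation term via the triangle inequality, exactly as in the truncated SVD proof in Sec.~\ref{sec5}. Writing $x^{\delta,\alpha}=R_{\alpha}b^{\delta}$ and recalling that $b=Ax$, we have
\[
\|x^{\delta,\alpha}-x\|_{\ell^2}\le\|R_{\alpha}b^{\delta}-R_{\alpha}Ax\|_{\ell^2}+\|R_{\alpha}Ax-x\|_{\ell^2}=\|R_{\alpha}(b^{\delta}-b)\|_{\ell^2}+\|R_{\alpha}Ax-x\|_{\ell^2},
\]
so the only thing to prove is the operator--norm bound
\[
\|R_{\alpha}\|\le\min\!\left(\frac{1}{2\sqrt{\alpha}},\frac{1}{\mu_{n'}}\right),
\]
applied to $y=b^{\delta}-b$ with $\|y\|_{\ell^2}\le\delta$.

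To get this bound I would use the SVD expression for $R_{\alpha}$ derived just before the proposition. Setting $y=b^{\delta}-b$ and running the same manipulation that gave (\ref{Salpha1}) yields
\[
R_{\alpha}y=\sum_{k=1}^{n}\frac{\mu_k}{\alpha+\mu_k^2}(\psi_k\cdot y)\,\va_k.
\]
Since $\{\va_k\}$ is orthonormal, Parseval gives
\[
\|R_{\alpha}y\|_{\ell^2}^2=\sum_{k=1}^{n}\left(\frac{\mu_k}{\alpha+\mu_k^2}\right)^{\!2}|\psi_k\cdot y|^2.
\]
Now I use two independent bounds on the coefficient. The first is the already--proven inequality $\mu/(\alpha+\mu^2)\le 1/(2\sqrt{\alpha})$, which is valid for every $\mu\ge 0$ and so holds term--by--term. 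The second uses $\alpha+\mu_k^2\ge\mu_k^2$, giving $\mu_k/(\alpha+\mu_k^2)\le 1/\mu_k\le 1/\mu_{n'}$ for every $\mu_k\ne0$; for $\mu_k=0$ the coefficient is simply zero, so the bound still holds. Taking the minimum of the two estimates and pulling the constant out of the sum,
\[
\|R_{\alpha}y\|_{\ell^2}^2\le\min\!\left(\frac{1}{2\sqrt{\alpha}},\frac{1}{\mu_{n'}}\right)^{\!2}\sum_{k=1}^{n}|\psi_k\cdot y|^2\le\min\!\left(\frac{1}{2\sqrt{\alpha}},\frac{1}{\mu_{n'}}\right)^{\!2}\|y\|_{\ell^2}^2,
\]
where the last step is Bessel's inequality for the orthonormal system $\{\psi_k\}$.

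Taking square roots and using $\|y\|_{\ell^2}\le\delta$ finishes the data--error estimate, and combining with the triangle inequality above gives (\ref{sec6:propeq1}). The only subtle point I expect is making sure both halves of the minimum are legitimate in full generality, specifically that possibly vanishing singular values do not break the $1/\mu_{n'}$ bound and that Bessel, rather than Parseval, is the correct tool since $\{\psi_k\}_{k=1}^{n}$ need not span all of $\Rm^m$; once this is made explicit the rest of the argument is routine.
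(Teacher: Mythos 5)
Your proof is correct and follows essentially the same route as the paper: the triangle inequality split $\|x^{\delta,\alpha}-x\|_{\ell^2}\le\|R_{\alpha}(b^{\delta}-b)\|_{\ell^2}+\|R_{\alpha}Ax-x\|_{\ell^2}$, the SVD representation $R_{\alpha}y=\sum_k\frac{\mu_k}{\alpha+\mu_k^2}(\psi_k\cdot y)\va_k$, the term-by-term bound by $\min\bigl(\frac{1}{2\sqrt{\alpha}},\frac{1}{\mu_{n'}}\bigr)$, and Bessel's inequality for $\{\psi_k\}$. Your explicit remarks about vanishing singular values and about Bessel versus Parseval are sound refinements of points the paper leaves implicit.
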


\begin{proof}
We have
\ba
&\|x^{\delta,\alpha}-R_{\alpha}b\|_{\ell^2}=
\|R_{\alpha}b^{\delta}-R_{\alpha}b\|_{\ell^2}=
\left\|\sum_{k=1}^n\frac{1}{\mu_k}q(\alpha,\mu_k)\left(\psi_k\cdot(b^{\delta}-b)\right)\va_k\right\|_{\ell^2}
\\
&\le
\min\left(\frac{1}{2\sqrt{\alpha}},\frac{1}{\mu_{n'}}\right)
\left\|\sum_{k=1}^n\left(\psi_k\cdot(b^{\delta}-b)\right)\va_k\right\|_{\ell^2}=
\min\left(\frac{1}{2\sqrt{\alpha}},\frac{1}{\mu_{n'}}\right)
\left(\sum_{k=1}^n\left|\psi_k\cdot(b^{\delta}-b)\right|^2\right)^{1/2}
\\
&\le
\min\left(\frac{1}{2\sqrt{\alpha}},\frac{1}{\mu_{n'}}\right)
\|b^{\delta}-b\|_{\ell^2}
\le
\min\left(\frac{1}{2\sqrt{\alpha}},\frac{1}{\mu_{n'}}\right)\delta.
\ea
Since $\|x^{\delta,\alpha}-x\|_{\ell^2}\le\|x^{\delta,\alpha}-R_{\alpha}b\|_{\ell^2}+\|R_{\alpha}b-x\|_{\ell^2}$, we obtain
\be
\|x^{\delta,\alpha}-x\|_{\ell^2}\le
\min\left(\frac{1}{2\sqrt{\alpha}},\frac{1}{\mu_{n'}}\right)\delta+
\|R_{\alpha}b-x\|_{\ell^2}.
\ee
The proof is complete.
\end{proof}

\begin{rmk}
The right-hand side of (\ref{sec6:propeq1}) implies that we can set $\alpha=0$ if $\delta/\mu_{n'}$ is sufficiently small. When $\delta$ is large or $\mu_{n'}$ is small, however, a better solution $x^{\delta,\alpha}$ is obtained with a finite $\alpha>0$.
\end{rmk}

\begin{lem}
$\|R_{\alpha}\|\le\frac{1}{2\sqrt{\alpha}}$.
\end{lem}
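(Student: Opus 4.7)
The plan is to read off the bound directly from the SVD expansion of $R_\alpha$ that was already derived in (\ref{Salpha1}), combined with the scalar filter inequality $q(\alpha,\mu)\le \mu/(2\sqrt{\alpha})$ established immediately before the lemma. The operator norm is defined as
\[
\|R_\alpha\|=\sup_{b\neq 0}\frac{\|R_\alpha b\|_{\ell^2}}{\|b\|_{\ell^2}},
\]
so I need to bound $\|R_\alpha b\|_{\ell^2}$ by a multiple of $\|b\|_{\ell^2}$ uniformly in $b$.

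First, for arbitrary $b\in\Rm^m$ I would apply (\ref{Salpha1}) with $b$ in place of $b^\delta$ to obtain
\[
R_\alpha b=\sum_{k=1}^n \frac{q(\alpha,\mu_k)}{\mu_k}(\psi_k\cdot b)\,\va_k.
\]
Since $\{\va_k\}_{k=1}^n$ is an orthonormal system in $\Rm^n$, taking squared norms gives
\[
\|R_\alpha b\|_{\ell^2}^2=\sum_{k=1}^n\left(\frac{q(\alpha,\mu_k)}{\mu_k}\right)^{\!2}|\psi_k\cdot b|^2.
\]

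Second, I would invoke the scalar inequality $q(\alpha,\mu)\le \mu/(2\sqrt{\alpha})$, which yields $q(\alpha,\mu_k)/\mu_k\le 1/(2\sqrt{\alpha})$ for every $k$ (and remains valid if some $\mu_k=0$, in which case the corresponding term in the expansion is absent). This turns the equality above into
\[
\|R_\alpha b\|_{\ell^2}^2\le \frac{1}{4\alpha}\sum_{k=1}^n|\psi_k\cdot b|^2 \le \frac{1}{4\alpha}\|b\|_{\ell^2}^2,
\]
where the last step is Bessel's inequality for the orthonormal system $\{\psi_k\}_{k=1}^n\subset\Rm^m$ (exactly the step used in the proof of the preceding proposition). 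Taking square roots and then the supremum over unit $b$ yields $\|R_\alpha\|\le 1/(2\sqrt{\alpha})$, as claimed.

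There is no real obstacle here; the whole content of the lemma is that the Tikhonov filter $q(\alpha,\mu)/\mu$ is uniformly bounded by $1/(2\sqrt{\alpha})$, and the rest is the standard Bessel/Parseval argument already rehearsed in the proof of the proposition. The only minor care needed is to include or exclude the indices with $\mu_k=0$ correctly, but since those terms do not appear in (\ref{Salpha1}) this is automatic.
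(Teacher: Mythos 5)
Your proof is correct and follows essentially the same route as the paper's: expand $R_{\alpha}b$ in the singular basis, bound the filter $q(\alpha,\mu_k)/\mu_k$ by $1/(2\sqrt{\alpha})$ (the paper phrases this as the arithmetic--geometric mean inequality $\alpha+\mu^2\ge 2\sqrt{\alpha}\,\mu$, which is the same estimate you quote), and finish with Bessel's inequality for $\{\psi_k\}$. No differences worth noting.
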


\begin{proof}
We note that $\|x\|_{\ell^2}^2=\sum_{j=1}^n|x\cdot\va_j|^2$ for any $x\in\Rm^n$. We have
\be
\|R_{\alpha}b\|_{\ell^2}^2=
\sum_{j=1}^n\frac{1}{\mu_j^2}q(\alpha,\mu_j)^2|b\cdot\psi_j|^2
\le\frac{1}{(2\sqrt{\alpha})^2}\sum_{j=1}^n|b\cdot\psi_j|^2
\le\frac{1}{(2\sqrt{\alpha})^2}\|b\|_{\ell^2}^2
\ee
for any $b$. Here, we used the arithmetic-geometric mean inequality.
\end{proof}

\begin{thm}
Suppose that the singular values are positive. For given $x\in\Rm^n$ ($x\neq0$) and $\varepsilon>0$, there exists $\alpha_0=\alpha_0(\varepsilon,\|x\|_{\ell^2})>0$ such that
\be
\|R_{\alpha_0}Ax-x\|_{\ell^2}^2<\varepsilon.
\ee
\end{thm}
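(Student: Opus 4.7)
The plan is to expand $R_\alpha Ax - x$ in the singular system $\{\varphi_k, \psi_k\}_{k=1}^n$ and exploit the fact that we are in finite dimensions with all singular values strictly positive, so the convergence of each filter factor to $0$ is automatically uniform across the (finitely many) modes.

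First I would express $R_\alpha Ax$ explicitly. Since $\{\varphi_k\}_{k=1}^n$ is an orthonormal basis of $\Rm^n$, write $x=\sum_{k=1}^n(x\cdot\varphi_k)\varphi_k$. Using $A\varphi_k=\mu_k\psi_k$ and the orthonormality of $\{\psi_k\}$, one has $\psi_k\cdot Ax=\mu_k(x\cdot\varphi_k)$. Plugging $b=Ax$ into formula (\ref{Salpha1}) gives
\be
R_\alpha Ax=\sum_{k=1}^n q(\alpha,\mu_k)(x\cdot\varphi_k)\varphi_k,
\ee
so
\be
R_\alpha Ax-x=\sum_{k=1}^n\bigl(q(\alpha,\mu_k)-1\bigr)(x\cdot\varphi_k)\varphi_k=-\sum_{k=1}^n\frac{\alpha}{\alpha+\mu_k^2}(x\cdot\varphi_k)\varphi_k.
\ee

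Next, by orthonormality of $\{\varphi_k\}$,
\be
\|R_\alpha Ax-x\|_{\ell^2}^2=\sum_{k=1}^n\left(\frac{\alpha}{\alpha+\mu_k^2}\right)^2|x\cdot\varphi_k|^2.
\ee
Let $\mu_{\min}=\min_{1\le k\le n}\mu_k>0$ by hypothesis. For every $k$ and every $\alpha>0$ we have the crude bound $\alpha/(\alpha+\mu_k^2)\le\alpha/\mu_k^2\le\alpha/\mu_{\min}^2$, hence
\be
\|R_\alpha Ax-x\|_{\ell^2}^2\le\frac{\alpha^2}{\mu_{\min}^4}\sum_{k=1}^n|x\cdot\varphi_k|^2=\frac{\alpha^2}{\mu_{\min}^4}\|x\|_{\ell^2}^2.
\ee

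Finally, given $\varepsilon>0$ and $x\neq0$, choose
\be
\alpha_0=\alpha_0(\varepsilon,\|x\|_{\ell^2}):=\frac{\mu_{\min}^2\sqrt{\varepsilon}}{2\|x\|_{\ell^2}}>0;
\ee
then $\|R_{\alpha_0}Ax-x\|_{\ell^2}^2\le\varepsilon/4<\varepsilon$, which is the required strict inequality.

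There is no real obstacle here: the argument is essentially a finite-dimensional dominated-convergence statement, and the only ingredient specific to this setting is that the Tikhonov filter $q(\alpha,\mu)\to1$ pointwise as $\alpha\to0$ whenever $\mu>0$. The positivity assumption on all singular values is what lets us bound the filter deficit uniformly by $\alpha/\mu_{\min}^2$; without it (e.g., in the infinite-dimensional setting with $\sigma_n\to0$), one would need the more delicate argument behind (\ref{sec5:hzero}) and could only conclude convergence, not the polynomial-in-$\alpha$ rate obtained above.
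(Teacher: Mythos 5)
Your proof is correct and follows essentially the same route as the paper: expand $R_\alpha Ax-x$ in the basis $\{\va_k\}$ and use that the filter deficit $1-q(\alpha,\mu_k)=\alpha/(\alpha+\mu_k^2)$ is small for small $\alpha$. The only difference is cosmetic: where the paper invokes the pointwise limit $q(\alpha,\mu_k)\to1$ over the finitely many modes, you bound the deficit explicitly by $\alpha/\mu_{\min}^2$ and thereby get a concrete choice of $\alpha_0$ with a polynomial rate, which is a slight sharpening rather than a different argument.
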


\begin{proof}
Since
\be
\va_k^TR_{\alpha}Ax=
\frac{1}{\mu_k}q(\alpha,\mu_k)\psi_k^TAx=
q(\alpha,\mu_k)(x\cdot\va_k),
\ee
we have
\be
\|R_{\alpha}Ax-x\|_{\ell^2}^2=
\sum_{k=1}^n\left|\va_k^T(R_{\alpha}Ax-x)\right|^2
=\sum_{k=1}^n\left(q(\alpha,\mu_k)-1\right)^2|x\cdot\va_k|^2.
\ee
Since $\lim_{\alpha\to0}q(\alpha,\mu)=1$ for $\mu>0$, there exists $\alpha_0(\varepsilon,\|x\|_{\ell^2})$ such that
\be
\left(q(\alpha,\mu_k)-1\right)^2<\frac{\varepsilon}{\|x\|_{\ell^2}^2}
\ee
for all $k\in[1,n]$ and any $\alpha\in(0,\alpha_0]$. Hence,
\be
\|R_{\alpha}Ax-x\|_{\ell^2}^2<
\frac{\varepsilon}{\|x\|_{\ell^2}^2}\sum_{k=1}^n|x\cdot\va_k|^2=\varepsilon
\ee
for any $0<\alpha\le\alpha_0$.
\end{proof}

\subsection{Regularized operator}
\hfill\vskip1mm

Let us solve the inverse problem of $Kf=g$ with the Tikhonov regularization. We define $R_{\alpha}\colon H_2\to H_1$ ($\alpha>0$) as
\begin{equation}
R_{\alpha}g=
\sum_{n=1}^{\infty}\frac{\sigma_n}{\alpha+\sigma_n^2}\langle g,\psi_n\rangle\phi_n
=\sum_{n=1}^{\infty}q(\alpha,\sigma_n)\frac{1}{\sigma_n}\langle g,\psi_n\rangle\phi_n.
\label{sec6:fTikhonov}
\end{equation}
With the Tikhonov regularization, $R_{\alpha}$ approximates $K^{-1}$. An example of the Tikhonov filter is shown in Fig.~\ref{sec6:fig00}. For example, when $g^{\delta}-g=\sqrt{\sigma_m}\psi_m$, we have $f^{\varepsilon}-f=\frac{\sigma_m^{3/2}}{(\alpha+\sigma_m^2)}\phi_m$ and the error does not increase.

\begin{figure}[ht]
\begin{center}
\includegraphics[width=0.4\textwidth]{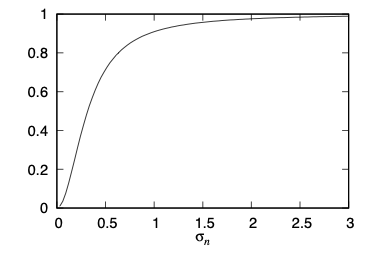}
\end{center}
\caption{
The Tikhonov filter $\frac{1}{1+\alpha e^{2n^2T}}=\frac{1}{1+\alpha/\sigma_n^2}$ for the case of the heat equation is shown for $\alpha=0.1$.
}
\label{sec6:fig00}
\end{figure}

\begin{thm}
If $\alpha(\delta)>0$ is chosen such that $\lim_{\delta\to0}\alpha(\delta)=0$ and $\lim_{\delta\to0}\delta/\alpha(\delta)=0$, then
\be
\lim_{\delta\to0}\|R_{\alpha(\delta)}g^{\delta}-f\|_{H_1}=0.
\ee
\end{thm}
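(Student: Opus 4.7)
The plan is to mirror the argument used earlier for truncated SVD. By the triangle inequality and the identity $g=Kf$,
\[
\|R_{\alpha}g^{\delta}-f\|_{H_1}\le\|R_{\alpha}(g^{\delta}-g)\|_{H_1}+\|R_{\alpha}Kf-f\|_{H_1},
\]
so it is enough to show that each term tends to zero as $\delta\to 0$ (with $\alpha=\alpha(\delta)$). The two terms play the roles of ``amplified data error'' and ``approximation error'', respectively, and they are treated by quite different tools.

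For the first (data-error) term, I would first establish an operator bound of the form $\|R_{\alpha}\|_{H_2\to H_1}\le 1/(2\sqrt{\alpha})$. From the spectral formula (\ref{sec6:fTikhonov}) and Parseval's identity, for any $g\in H_2$,
\[
\|R_{\alpha}g\|_{H_1}^2=\sum_{n=1}^{\infty}\frac{\sigma_n^2}{(\alpha+\sigma_n^2)^2}|\langle g,\psi_n\rangle|^2\le\frac{1}{4\alpha}\|g\|_{H_2}^2,
\]
using the elementary AM-GM consequence $\alpha+\sigma_n^2\ge 2\sigma_n\sqrt{\alpha}$. Hence $\|R_{\alpha}(g^{\delta}-g)\|_{H_1}\le\delta/(2\sqrt{\alpha})$. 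Since $\alpha(\delta)\to 0$, the hypothesis $\delta/\alpha(\delta)\to 0$ already implies
\[
\frac{\delta}{\sqrt{\alpha(\delta)}}=\frac{\delta}{\alpha(\delta)}\sqrt{\alpha(\delta)}\to 0,
\]
so the first term vanishes in the limit.

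For the second (approximation) term, I would compute $R_{\alpha}Kf$ explicitly using the SVD relation $Kf=\sum_n\sigma_n\langle f,\phi_n\rangle\psi_n$, obtaining
\[
R_{\alpha}Kf=\sum_{n=1}^{\infty}q(\alpha,\sigma_n)\langle f,\phi_n\rangle\phi_n,\qquad q(\alpha,\sigma_n)=\frac{\sigma_n^2}{\alpha+\sigma_n^2}.
\]
Since $\{\phi_n\}$ is an orthonormal basis of $H_1$, Parseval gives
\[
\|R_{\alpha}Kf-f\|_{H_1}^2=\sum_{n=1}^{\infty}\left(q(\alpha,\sigma_n)-1\right)^2|\langle f,\phi_n\rangle|^2.
\]
Each summand is bounded by $|\langle f,\phi_n\rangle|^2$, a summable majorant by Parseval, and tends to $0$ as $\alpha\to 0$ for every fixed $n$ because $q(\alpha,\sigma_n)\to 1$. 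Dominated convergence then yields $\|R_{\alpha}Kf-f\|_{H_1}\to 0$ as $\alpha\to 0$, and therefore as $\delta\to 0$.

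The main obstacle, I expect, is the second term: no quantitative rate for $\|R_{\alpha}Kf-f\|_{H_1}$ as $\alpha\to 0$ can be produced without imposing source conditions on $f$ (this is the well-known arbitrarily slow convergence of regularization for ill-posed problems). For the purely qualitative statement of the theorem, however, the dominated convergence argument against the Parseval-summable majorant is all that is needed, and the balance between the two terms is precisely what the hypothesis $\alpha(\delta)\to 0$, $\delta/\alpha(\delta)\to 0$ encodes.
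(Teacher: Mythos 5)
Your proposal is correct and follows essentially the same route as the paper: the triangle-inequality split into a data-error term and an approximation term, a spectral operator bound for the former, and a termwise-limit-plus-summable-majorant argument (which the paper carries out by hand as an explicit $\varepsilon$-splitting at some index $N_0$) for the latter. The one small difference is that you bound $\|R_{\alpha}\|$ by $1/(2\sqrt{\alpha})$ via AM--GM where the paper uses the cruder bound $\sigma_1/\alpha$; your estimate is sharper and would in fact allow the hypothesis to be weakened to $\delta/\sqrt{\alpha(\delta)}\to0$, but under the stated hypothesis both versions close the argument.
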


\begin{proof}
We note that
\be
\|R_{\alpha}g^{\delta}-f\|_{H_1}\le
\|R_{\alpha}g^{\delta}-R_{\alpha}g\|_{H_1}+\|R_{\alpha}g-f\|_{H_1}.
\ee
For the first term on the right-hand side, we have
\be
\begin{aligned}
\|R_{\alpha}g^{\delta}-R_{\alpha}g\|_{H_1}^2
&=
\left\|\sum_{n=1}^{\infty}\frac{\sigma_n}{\sigma_n^2+\alpha}\langle g^{\delta}-g,\psi_n\rangle\phi_n\right\|_{H_1}^2
\\
&\le
\sum_{n=1}^{\infty}\left(\frac{\sigma_n}{\sigma_n^2+\alpha}\right)^2
\langle g^{\delta}-g,\psi_n\rangle^2
\le
\sum_{n=1}^{\infty}\frac{\sigma_1^2}{\alpha^2}\langle g^{\delta}-g,\psi_n\rangle^2
\\
&\le
\frac{\sigma_1^2}{\alpha^2}\|g^{\delta}-g\|_{H_1}^2
\le
\frac{\sigma_1^2\delta^2}{\alpha^2}.
\end{aligned}
\ee
Since $\lim_{\delta\to0}\delta/\alpha(\delta)=0$, $\lim_{\delta\to0}\|R_{\alpha}g^{\delta}-R_{\alpha}g\|_{H_1}^2=0$.

Next, we note that
\be
\|R_{\alpha}g-f\|_{H_1}=
\|R_{\alpha}Kf-f\|_{H_1}^2=\sum_{n=1}^{\infty}\left(\frac{\alpha}{\alpha+\sigma_n^2}\right)^2\left|\langle f,\phi_n\rangle\right|^2,
\ee
where we used
\be
R_{\alpha}Kf=\sum_{n=1}^{\infty}\frac{\sigma_n^2}{\sigma_n^2+\alpha}\langle f,\phi_n\rangle\phi_n.
\ee
Since $f\in H_1$, $\lim_{N\to\infty}\sum_{n=N}^{\infty}|\langle f,\phi_n\rangle|^2=0$. Hence for any $\alpha>0$, we can choose sufficiently large $N_0$ such that
\be
\sum_{n=N_0+1}^{\infty}|\langle f,\phi_n\rangle|^2<\frac{\alpha}{2}.
\ee
If $\delta$ is sufficiently small, we can choose $\alpha$ such that
\be
0<\alpha<\frac{\sigma_{N_0}^4}{2\|f\|_{H_1}}.
\ee
Then
\be
\begin{aligned}
\|R_{\alpha}Kf-f\|_{H_1}^2
&=
\left(\sum_{n=1}^{N_0}+\sum_{n=N_0+1}^{\infty}\right)\left(\frac{\alpha}{\alpha+\sigma_n^2}\right)^2|\langle f,\phi_n\rangle|^2
\\
&\le
\left(\frac{\alpha}{\sigma_{N_0}^2}\right)^2\sum_{n=1}^{N_0}|\langle f,\phi_n\rangle|^2+
\sum_{n=N_0+1}^{\infty}|\langle f,\phi_n\rangle|^2
\\
&\le
\frac{\alpha}{2}+\frac{\alpha}{2}=\alpha.
\end{aligned}
\ee
We note that $\alpha\to0$ as $\delta\to0$. Thus, the proof is complete.
\end{proof}

\begin{rmk}
If $\alpha$ is large (small), $N_0$ is small (large), and then $\sigma_{N_0}$ becomes large (small). If $\delta>0$ is small, we can choose small $\alpha>0$. 
\end{rmk}

\subsection{Inverse problem for the heat equation with the Tikhonov regularization}
\hfill\vskip1mm

We assume that $f(x)=x(\pi-x)$ at $t=0$. Suppose we observe $g(x)=u(x,T)$ (we set $T=1$). Let us obtain $f^{\alpha}$ by solving the inverse problem of $Kf=g$ with the Tikhonov regularization.

By solving the forward problem we obtain $g(x)$ as
\be
\begin{aligned}
g(x)&=
u(x,T)=(Kf)(x)
\\
&=
4\sqrt{\frac{2}{\pi}}\sum_{m=1}^{\infty}\frac{\sigma_{2m-1}}{(2m-1)^3}\phi_{2m-1}(x).
\end{aligned}
\ee
Let us write $f^{\alpha}=R_{\alpha}g$ (see (\ref{sec6:fTikhonov})). We have
\be
f^{\alpha}(x)=
\frac{8}{\pi}\sum_{m=1}^{\infty}\frac{1}{1+\alpha e^{2(2m-1)^2}}\frac{\sin{(2m-1)x}}{(2m-1)^3}.
\ee

Numerical results are shown in Figs.~\ref{sec6:fig01} and \ref{sec6:fig02}. Since no noise is assumed for $g$, the smaller $\alpha>0$ becomes, the better $f$ is reconstructed.

\begin{figure}[ht]
\begin{center}
\includegraphics[width=0.4\textwidth]{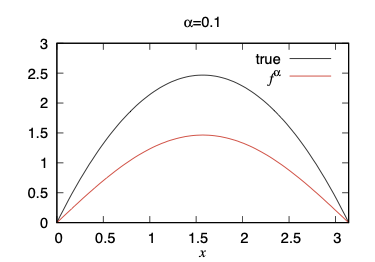}
\includegraphics[width=0.4\textwidth]{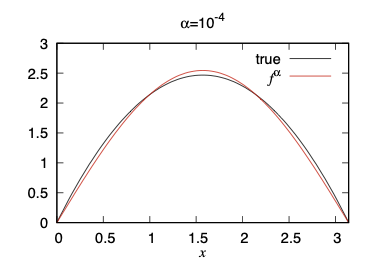}
\end{center}
\caption{
In the case that no measurement noise is assumed. The regularization parameter $\alpha$ is set to $0.1$ on the left panel and $10^{-4}$ on the right panel.
}
\label{sec6:fig01}
\end{figure}

\begin{figure}[ht]
\begin{center}
\includegraphics[width=0.4\textwidth]{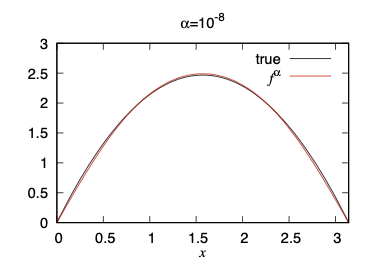}
\end{center}
\caption{
In the case that no measurement noise is assumed. The regularization parameter $\alpha$ is set to $10^{-8}$.
}
\label{sec6:fig02}
\end{figure}

Let us add $3\%$ noise. We have
\be
\begin{aligned}
f^{\delta,\alpha}
&=
\sum_{n=1}^{\infty}\left(\frac{\sigma_n^2}{\alpha+\sigma_n^2}\right)\frac{1}{\sigma_n}\langle g^{\delta},\psi_n\rangle\phi_n
\\
&=
\frac{2}{\pi}\sum_{n=1}^{\infty}\frac{e^{n^2}}{1+\alpha e^{2n^2}}\left(
\int_0^{\pi}g^{\delta}(y)\sin{ny}\,dy\right)\sin{nx}.
\end{aligned}
\ee
Recall singular values in (\ref{sec5:singularvals}). Reconstructed results are shown in Figs.~\ref{sec6:fig03} and \ref{sec6:fig04}. It is seen that the truncated SVD and Tikhonov regularization similarly work for a suitable regularization parameter $\alpha$.

\begin{figure}[ht]
\begin{center}
\includegraphics[width=0.4\textwidth]{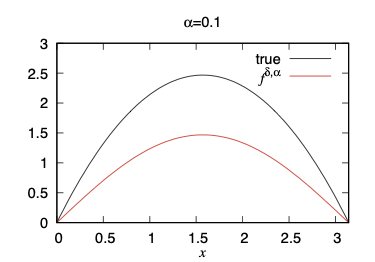}
\includegraphics[width=0.4\textwidth]{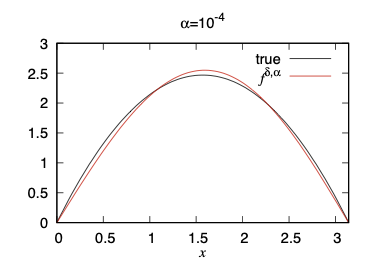}
\end{center}
\caption{
In the case that $3\%$ measurement noise is assumed. The regularization parameter $\alpha$ is set to $0.1$ on the left panel and $10^{-4}$ on the right panel.
}
\label{sec6:fig03}
\end{figure}

\begin{figure}[ht]
\begin{center}
\includegraphics[width=0.4\textwidth]{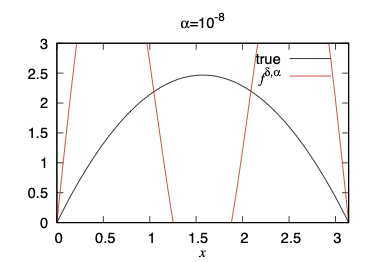}
\end{center}
\caption{
In the case that $3\%$ measurement noise is assumed. The regularization parameter $\alpha$ is set to $10^{-8}$.
}
\label{sec6:fig04}
\end{figure}

\subsection{Cost function}
\hfill\vskip1mm

To deepen the understanding of the Tikhonov regularization, we revisit the inverse problem of $Ax=b$, where $A\in\Rm^{m\times n}$, $b\in\Rm^m$, and $x\in\Rm^n$. We suppose $m\ge n$ (the case of $m<n$ can be considered similarly). We wish to obtain $x$ by solving $Ax=b$ for given $A$ and $b$. That is, we want to find the solution by minimizing the cost function $\|Ax-b\|_{\ell^2}$ as
\be
\argmin_{x\in\Rm^n}\|Ax-b\|_{\ell^2}.
\ee
We will solve this problem by suitably regularizing $A$. Although we here consider the matrix-vector equation, a similar discussion is also possible for the inverse problem $Kf=g$ with a linear operator $K$ ($f\in H_1$, $g\in H_2$).

In the actual situation, $b$ contains noise and $b^{\delta}$ is observed:
\be
\|b^{\delta}-b\|_{\ell^2}\le\delta,\quad\delta>0.
\ee
In general, $b^{\delta}$ does not belong to the range of $A$. Hence the solution,
\be
\argmin_{x\in\Rm^n}\|Ax-b^{\delta}\|_{\ell^2}
\ee
might be quite different from the true $x$.

To remedy this situation, we consider the cost function $\Phi_{\alpha}(x)$ as
\begin{equation}
\Phi_{\alpha}(x)=\|Ax-b^{\delta}\|_{\ell^2}^2+\alpha\|x\|_{\ell^2}^2
\label{sec6:functional}
\end{equation}
and find the minimizer $x^{\delta,\alpha}$ of
\be
x^{\delta,\alpha}=\argmin_{x\in\Rm^n}\Phi_{\alpha}(x)
\ee
where $\alpha>0$. The second term $\alpha\|x\|_{\ell^2}^2$ on the right-hand side was added to stabilize the solution (see also the Ridge regression \cite{McDonald09}). This term is called the Tikhonov regularization term.

\begin{thm}
For $\alpha>0$, there exists a unique $x^{\alpha}\in\Rm^n$ such that
\be
\|Ax^{\alpha}-b\|_{\ell^2}^2+\alpha\|x^{\alpha}\|_{\ell^2}^2=
\inf_{x\in\Rm^n}\left\{\|Ax-b\|_{\ell^2}^2+\alpha\|x\|_{\ell^2}^2\right\}.
\ee
The minimizer $x^{\alpha}$ is given by the solution in (\ref{Salpha1}), which is obtained with $R_{\alpha}$ in (\ref{sec6:RalTikh}).
\end{thm}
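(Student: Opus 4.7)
The plan is direct: $\Phi_\alpha$ is a strictly convex, coercive quadratic on $\Rm^n$ whenever $\alpha > 0$, so a unique minimizer exists and is characterized by the first-order condition, which turns out to be exactly the normal equation (\ref{cond1}) already shown to be solvable by $R_\alpha b$.

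First, I would expand
\[
\Phi_\alpha(x) = x^T(A^TA + \alpha E)x - 2(A^T b)^T x + \|b\|_{\ell^2}^2,
\]
whose Hessian is $2(A^TA + \alpha E)$. Since $A^TA$ is positive semidefinite, the smallest eigenvalue of this Hessian is at least $2\alpha > 0$, so $\Phi_\alpha$ is strictly convex on $\Rm^n$. Coercivity follows from the lower bound
\[
\Phi_\alpha(x) \ge \alpha \|x\|_{\ell^2}^2 - 2\|A^T b\|_{\ell^2}\|x\|_{\ell^2} + \|b\|_{\ell^2}^2,
\]
which tends to $+\infty$ as $\|x\|_{\ell^2}\to\infty$. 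Continuity of $\Phi_\alpha$ together with coercivity yields existence of a global minimizer in $\Rm^n$, and strict convexity yields uniqueness; call this minimizer $x^\alpha$.

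Second, I would characterize $x^\alpha$ via $\nabla \Phi_\alpha(x^\alpha) = 0$, which gives
\[
2(A^TA + \alpha E) x^\alpha - 2 A^T b = 0,
\]
i.e.\ the normal equation (\ref{cond1}). By the invertibility theorem earlier in this section, $\alpha E + A^TA$ is invertible, so $x^\alpha = (\alpha E + A^TA)^{-1} A^T b = R_\alpha b$ with $R_\alpha$ as in (\ref{sec6:RalTikh}); the SVD form (\ref{Salpha1}) has already been derived from this matrix formula in the computation immediately preceding the theorem, so I can simply cite it. There is no genuine obstacle — this is textbook convex quadratic minimization — and the only facts that need invoking are the positive semidefiniteness of $A^TA$ (for strict convexity), the invertibility of $\alpha E + A^TA$, and the previously established SVD expansion of $R_\alpha b$.
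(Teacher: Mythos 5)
Your proposal is correct and rests on the same essential facts as the paper's proof: the normal equation $(\alpha E+A^TA)x^{\alpha}=A^Tb$ characterizes the minimizer, and the invertibility of $\alpha E+A^TA$ (established earlier) together with the SVD computation gives the form (\ref{Salpha1}). The only difference is packaging: the paper proves the characterization via an explicit completion-of-squares identity expanding $\Phi_{\alpha}(x)$ around $x^{\alpha}$, which delivers existence, uniqueness, and the first-order condition in a single display, whereas you route the argument through the standard convexity machinery (positive-definite Hessian for strict convexity, coercivity plus continuity for existence, vanishing gradient for the characterization) --- a slightly longer but equally valid and, if anything, more explicit treatment of existence.
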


\begin{proof}
We observe that the condition (\ref{cond1}) is necessary and sufficient for $x^{\alpha}$ to minimize the Tikhonov functional (\ref{sec6:functional}) because for all $x\in\Rm^n$,
\ba
\|Ax-b\|_{\ell^2}^2+\alpha\|x\|_{\ell^2}^2
&=
\|Ax^{\alpha}-b\|_{\ell^2}^2+\alpha\|x^{\alpha}\|_{\ell^2}^2
+2(x-x^{\alpha})\cdot\left(\alpha x^{\alpha}+A^T(Ax^{\alpha}-b)\right)
\\
&+
\|A(x-x^{\alpha})\|_{\ell^2}^2+\alpha\|x-x^{\alpha}\|_{\ell^2}^2.
\ea
The above relation implies
\be
\alpha x^{\alpha}+A^T(Ax^{\alpha}-b)=
(\alpha E+A^TA)x^{\alpha}-A^Tb=0.
\ee
\end{proof}

\subsection{Remarks}
\hfill\vskip1mm

In Secs.~\ref{sec4} through \ref{sec6}, we have considered linear inverse problems. Inverse problems for which singular values decay slowly (e.g., as a power-law) are said to be mildly ill-posed. If singular values decay rapidly (e.g., exponentially), the inverse problem is called severely ill-posed.

When inverse problems are nonlinear, basically there are two approaches to numerically solve those problems: iterative schemes (Sec.~\ref{sec7}) and inverse series (\ref{sec9}). Below, we will explore these approaches.

\section{Iterative schemes}
\label{sec7}

\subsection{Landweber iteration}
\hfill\vskip1mm

The minimization problem of a cost function can be numerically solved with an iterative scheme. Although the Landweber iteration can be extended to nonlinear inverse problems, let us see how it works using the inverse problem $Kf=g$, which we have studied above. That is, $f\in H_1$, $g\in H_2$ ($H_1,H_2$ are some Hilbert spaces) and $K\colon H_1\to H_2$ is linear.

We formulate the problem with the relaxation factor $\omega>0$:
\be
\omega K^*Kf=\omega K^*g.
\ee
We have
\be
f-\omega K^*Kf=f-\omega K^*g.
\ee
Hence,
\begin{equation}
f=\left(I-\omega K^*K\right)f+\omega K^*g,
\label{sec7:fite0}
\end{equation}
where $I$ is the identity. This means that the solution $f$ of the inverse problem is a fixed point.

Since measurement $g$ contains noise, we use $g^{\delta}$ in (\ref{sec7:fite0}). To find the fixed point, we can consider the following algorithm:
\begin{itemize}
\item[Step 1.] Starts with an initial guess $f^0$.
\item[Step 2.] Repeat
\[
f^m=\left(I-\omega K^*K\right)f^{m-1}+\omega K^*g^{\delta}=
f^{m-1}-\omega K^*\left(Kf^{m-1}-g^{\delta}\right)
\]
for $m=1,2,\dots$.
\end{itemize}
The first two iterations are obtained as follows.
\be
\begin{aligned}
f^1&=
f^0-\omega K^*(Kf^0-g^{\delta}),
\\
f^2&=
f^1-\omega K^*(Kf^1-g^{\delta})
\\
&=
[f^0-\omega K^*(Kf^0-g^{\delta})]-
\omega K^*\left(K[f^0-\omega K^*(Kf^0-g^{\delta})]-g^{\delta}\right)
\\
&=
(I-\omega K^*K)^2f^0+
\omega\left(2I-\omega K^*K\right)K^*g^{\delta}.
\end{aligned}
\ee

We define operator $R_m\colon H_2\to H_1$ as
\be
R_m=\omega\sum_{k=0}^{m-1}\left(I-\omega K^*K\right)^kK^*
\ee
for $m=1,2,\dots$. After $m$ iterations, we obtain
\be
f^m=\left(I-\omega K^*K\right)^mf^0+R_mg^{\delta}.
\ee
For simplicity, let us set $f^0=0$.

\begin{thm}
Let $\omega>0$ be a constant and $m=m(\delta)\in\Nm$ such that
\be
\lim_{\delta\to0}m(\delta)=\infty,\quad
\lim_{\delta\to0}\delta^2m(\delta)=0,\quad
0<\omega<\frac{1}{\|K\|^2}.
\ee
Then for the operator $R_m$, which is defined above,
\be
\lim_{\delta\to0}\|R_mg^{\delta}-f\|_{H_1}=0.
\ee
\end{thm}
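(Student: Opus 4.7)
The plan is to decompose the error via the triangle inequality as
\be
\|R_m g^\delta - f\|_{H_1} \le \|R_m g^\delta - R_m Kf\|_{H_1} + \|R_m Kf - f\|_{H_1},
\ee
using $g=Kf$, and to show each term goes to zero separately. The approximation error term depends only on $m$; the propagated data error depends on both $m$ and $\delta$, and will be controlled through the hypothesis $\delta^2 m(\delta)\to 0$.

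The first thing I would do is diagonalize $R_m$ in the singular system $\{\phi_n,\psi_n,\sigma_n\}$ of $K$. Since $(I-\omega K^*K)\phi_n = (1-\omega\sigma_n^2)\phi_n$ and $K^*\psi_n=\sigma_n\phi_n$, a geometric sum gives
\be
R_m g = \sum_{n=1}^{\infty}\frac{1-(1-\omega\sigma_n^2)^m}{\sigma_n}\langle g,\psi_n\rangle\phi_n,
\qquad
R_m K f = \sum_{n=1}^{\infty}\bigl(1-(1-\omega\sigma_n^2)^m\bigr)\langle f,\phi_n\rangle\phi_n.
\ee
Hence
\be
\|R_m K f - f\|_{H_1}^2 = \sum_{n=1}^{\infty}(1-\omega\sigma_n^2)^{2m}|\langle f,\phi_n\rangle|^2.
\ee
Because $0<\omega<\|K\|^{-2}$ and $0<\sigma_n^2\le\|K\|^2$, one has $0\le 1-\omega\sigma_n^2<1$, so each summand tends to $0$ pointwise in $n$ as $m\to\infty$ and is dominated by $|\langle f,\phi_n\rangle|^2$, which is summable. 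Dominated convergence then yields $\|R_m K f - f\|_{H_1}\to 0$ as $\delta\to0$ (using $m(\delta)\to\infty$).

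For the propagated noise term, the key estimate is a spectral bound $\|R_m\|\le\sqrt{\omega m}$. To see this, set $x=\omega\sigma_n^2\in(0,1)$; by Bernoulli's inequality $(1-x)^m\ge 1-mx$, so $1-(1-x)^m\le mx$, and since $1-(1-x)^m\le 1$ we get $(1-(1-x)^m)^2\le mx$. Therefore
\be
\left(\frac{1-(1-\omega\sigma_n^2)^m}{\sigma_n}\right)^2 \le \frac{m\omega\sigma_n^2}{\sigma_n^2}=\omega m.
\ee
Consequently $\|R_m g^\delta - R_m Kf\|_{H_1}\le\|R_m\|\,\|g^\delta-g\|_{H_2}\le\sqrt{\omega m}\,\delta$, which tends to $0$ by the hypothesis $\delta^2 m(\delta)\to 0$. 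Combining the two estimates completes the proof.

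The only nonroutine step is the operator-norm bound $\|R_m\|\le\sqrt{\omega m}$, since it must couple the iteration count $m$ to the inverse of the smallest effective singular value; the Bernoulli trick is what makes this clean and is where I expect a careless argument to slip. Once that bound is in hand, both halves of the triangle inequality reduce to invoking the two rate hypotheses on $m(\delta)$.
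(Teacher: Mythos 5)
Your proposal is correct and follows essentially the same route as the paper: the same triangle-inequality split, the same diagonalization of $R_m$ via the geometric sum, the same Bernoulli-based bound $\|R_m\|\le\sqrt{\omega m}$, and the same use of the two rate hypotheses; your appeal to dominated convergence for the approximation error is just a repackaging of the paper's finite-part-plus-tail splitting of the series.
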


\begin{proof}
Since $K^*K\phi_1=\lambda\phi_1$, we have $\left\langle K^*K\phi_1,\phi_1\right\rangle=\left\langle\lambda_1\phi_1,\phi_1\right\rangle$. Thus,
\be
\|K\phi_1\|_{H_2}^2=\lambda\|\phi_1\|_{H_1}^2.
\ee
Noting that $\|K\phi_1\|_{H_2}\le\|K\|\|\phi_1\|_{H_1}$,\footnote{
$\|K\|=\sup_{\phi\in H_1,\;\phi\neq0}\frac{\|K\phi\|_{H_2}}{\|\phi\|_{H_1}}$.
} we find
\be
\lambda_1\le\|K\|^2.
\ee

By the triangle inequality, we have
\be
\|R_mg^{\delta}-f\|_{H_1}\le\|R_mg^{\delta}-R_mg\|_{H_1}+\|R_mg-f\|_{H_1}.
\ee
We obtain
\be
\begin{aligned}
&
(I-\omega K^*K)^kK^*g
=\sum_{j=1}^{\infty}\langle(I-\omega K^*K)^kK^*g,\phi_j\rangle\phi_j
\\
&=
\sum_{j=1}^{\infty}\langle K^*g,(I-\omega K^*K)^k\phi_j\rangle\phi_j
=\sum_{j=1}^{\infty}(1-\omega\sigma_j^2)^k\langle K^*g,\phi_j\rangle\phi_j,
\end{aligned}
\ee
where we used
\be
K^*K\phi_n=\sigma_n^2\phi_n.
\ee
Since $\langle K^*g,\phi_j\rangle=\langle g,K\phi_j\rangle=\sigma_j\langle g,\psi_j\rangle$, we have
\be
\begin{aligned}
R_mg
&=
\omega\sum_{k=0}^{m-1}(I-\omega K^*K)^kK^*g
=\omega\sum_{j=1}^{\infty}\sigma_j\langle g,\psi_j\rangle\sum_{k=0}^{m-1}(1-\omega\sigma_j^2)^k\phi_j
\\
&=
\omega\sum_{j=1}^{\infty}\sigma_j\langle g,\psi_j\rangle\frac{1-(1-\omega\sigma_j^2)^m}{1-(1-\omega\sigma_j^2)}\phi_j
=\sum_{j=1}^{\infty}\frac{1-(1-\omega\sigma_j^2)^m}{\sigma_j}\langle g,\psi_j\rangle\phi_j.
\end{aligned}
\ee

We note that $\omega\sigma_j^2\le\omega\lambda_1<1$ because $0<\omega<1/\|K\|^2$ and $\lambda_1\le\|K\|^2$. Hence,
\be
1-\left(1-\omega\sigma_j^2\right)^m\le1.
\ee
For each $m\in\Nm$,
\be
1-m\eta\le(1-\eta)^m,
\ee
where $\eta=\omega\sigma_j^2\in[0,1]$. From Bessel's inequality, we obtain
\be
\begin{aligned}
\|R_mg\|_{H_1}^2
&=
\sum_{j=1}^{\infty}\frac{\left(1-(1-\omega\sigma_j^2)^m\right)^2}{\sigma_j^2}|\langle g,\psi_j\rangle|^2
\le
\sum_{j=1}^{\infty}\frac{1-\left(1-\omega\sigma_j^2\right)^m}{\sigma_j^2}|\langle g,\psi_j\rangle|^2
\\
&\le
\sum_{j=1}^{\infty}\omega m|\langle g,\psi_j\rangle|^2
\le
\omega m\|g\|_{H_2}^2.
\end{aligned}
\ee
Thus we have
\be
\|R_m\|\le\sqrt{\omega m},\quad
\|R_mg^{\delta}-R_mg\|_{H_1}\le\sqrt{\omega m}\|g^{\delta}-g\|_{H_2}\le
\delta\sqrt{\omega m}.
\ee
The following estimate holds:
\be
\begin{aligned}
\|R_mg-f\|_{H_1}^2
&=
\left\|\sum_{j=1}^{\infty}\frac{1-(1-\omega\sigma_j^2)^m}{\sigma_j}\langle g,\psi_j\rangle\phi_j-\sum_{j=1}^{\infty}\frac{1}{\sigma_j}\langle g,\psi_j\rangle\phi_j\right\|_{H_1}^2
\\
&=
\left(\sum_{j=1}^N+\sum_{j=N+1}^{\infty}\right)\frac{(1-\omega\sigma_j^2)^{2m}}{\sigma_j^2}|\langle g,\psi_j\rangle|^2
\\
&\le
\sum_{j=1}^N\frac{(1-\omega\sigma_j^2)^{2m}}{\sigma_j^2}|\langle g,\psi_j\rangle|^2+
\sum_{j=N+1}^{\infty}\frac{1}{\sigma_j^2}|\langle g,\psi_j\rangle|^2.
\end{aligned}
\ee
The first term on the right-hand side of the above inequality goes to $0$ ($m\to\infty$) for an arbitrary fixed large $N$. Furthermore, the second term on the right-hand side of the above inequality can become arbitrarily small for sufficiently large $N$ because
\be
\|f\|_{H_1}^2=\sum_{j=1}^{\infty}\frac{1}{\sigma_j^2}|\langle g,\psi_j\rangle|^2.
\ee
Therefore,
\be
\lim_{m\to\infty}\|R_mg-f\|_{H_1}=0.
\ee
The proof is complete using $\|R_mg^{\delta}-R_mg\|_{H_1}\le\delta\sqrt{\omega m}$.
\end{proof}

We found that an approximate solution by the Landweber algorithm converges if suitably small $\omega$ is taken ($0<\omega<1/\|K\|^2$):
\be
\lim_{\delta\to0}\|R_mg^{\delta}-f\|_{H_1}=0.
\ee
We should choose large $m$ ($\lim_{\delta\to0}m(\delta)=\infty$) but $m$ should not be too large ($\lim_{\delta\to0}\delta^2m(\delta)=0$). In the Landweber iterative scheme, the iteration number takes the part of the regularization term and $1/m$ can be regarded as the regularization parameter.

The Landweber algorithm repeats iterations to minimize the cost function below:
\be
\Phi(f)=\frac{1}{2}\|Kf-g^{\delta}\|_{H_2}^2,\quad f\in H_1.
\ee
For any $\eta\in H_1$,
\be
\begin{aligned}
\Phi(f+\eta)-\Phi(f)
&=
\frac{1}{2}\langle Kf-g^{\delta}+K\eta,Kf-g^{\delta}+K\eta\rangle-\Phi(f)
\\
&=
\langle Kf-g^{\delta},K\eta\rangle+\frac{1}{2}\|K\eta\|_{H_2}^2.
\end{aligned}
\ee
Hence,
\be
|\Phi(f+\eta)-\Phi(f)-\nabla\Phi(f)\eta|\le
\frac{1}{2}\|K\|^2\|\eta\|_{H_1}^2,
\ee
where
\be
\nabla\Phi(f)\eta=\langle K^*(Kf-g^{\delta}),\eta\rangle.
\ee
Here, $\nabla\Phi(f)\colon H_1\to\Rm$ is a linear operator from $\eta\in H_1$ to $\langle K^*(Kf-g^{\delta}),\eta\rangle$, and is called the Fr\'{e}chet derivative of $\Phi(f)$ with respect to $f$:
\be
\lim_{\|\eta\|_{H_1}\to0}\frac{|\Phi(f+\eta)-\Phi(f)-\nabla\Phi(f)\eta|}{\|\eta\|_{H_1}}=0.
\ee

\begin{thm}
If $\Phi(f)$ takes its minimum at $f_*$, then
\be
\nabla\Phi(f_*)\eta=0
\ee
for all $\eta\in H_1$. This means $K^*(Kf_*-g^{\delta})=0$.
\end{thm}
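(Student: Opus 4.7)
The plan is to apply the standard first-order (Fermat-type) optimality condition for an interior minimum in a Hilbert space, exploiting the fact that $\Phi$ admits an \emph{exact} quadratic expansion, not merely a Fr\'{e}chet-differentiable one. This makes the whole argument elementary and sidesteps any abstract differentiability machinery at this final stage.

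First I would fix an arbitrary direction $\eta\in H_1$ and use the identity derived immediately before the theorem, which can be written as
\be
\Phi(f_*+\eta)-\Phi(f_*)=\langle K^*(Kf_*-g^{\delta}),\eta\rangle+\tfrac{1}{2}\|K\eta\|_{H_2}^2.
\ee
Next, I would substitute $\eta\mapsto t\eta$ for $t>0$; since $f_*$ minimizes $\Phi$, the left-hand side is nonnegative, giving
\be
t\,\langle K^*(Kf_*-g^{\delta}),\eta\rangle+\tfrac{t^2}{2}\|K\eta\|_{H_2}^2\ge0.
\ee
Dividing by $t$ and letting $t\downarrow 0$ yields $\langle K^*(Kf_*-g^{\delta}),\eta\rangle\ge0$. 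Replacing $\eta$ by $-\eta$ and repeating produces the reverse inequality, so $\langle K^*(Kf_*-g^{\delta}),\eta\rangle=0$ for every $\eta\in H_1$. By the definition of the Fr\'{e}chet derivative recalled just above the theorem, this is precisely the assertion $\nabla\Phi(f_*)\eta=0$ for all $\eta\in H_1$. For the second statement $K^*(Kf_*-g^{\delta})=0$, I would test the vanishing linear functional against $\eta=K^*(Kf_*-g^{\delta})\in H_1$; this gives $\|K^*(Kf_*-g^{\delta})\|_{H_1}^2=0$, hence the equality.

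There is no real obstacle here. The only point worth flagging is that the exact quadratic identity is what makes the $t\downarrow 0$ step immediate; if one instead started from the qualitative Fr\'{e}chet-differentiability bound $|\Phi(f+\eta)-\Phi(f)-\nabla\Phi(f)\eta|=o(\|\eta\|_{H_1})$, the same argument would go through but would require tracking the remainder under the rescaling $\eta\mapsto t\eta$, which the quadratic structure handles for free.
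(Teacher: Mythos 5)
Your argument is correct and essentially identical to the paper's: both use the exact quadratic identity $\Phi(f_*+t\eta)-\Phi(f_*)=t\langle K^*(Kf_*-g^{\delta}),\eta\rangle+\tfrac{t^2}{2}\|K\eta\|_{H_2}^2\ge0$ and exploit the sign freedom in $t$ (equivalently, in $\eta$) to force the linear term to vanish. Your explicit final step of testing against $\eta=K^*(Kf_*-g^{\delta})$ merely spells out what the paper leaves implicit.
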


\begin{proof}
For any $\eta\in H_1$ and $t\in\Rm$,
\be
\begin{aligned}
\Phi(f_*+t\eta)-\Phi(f_*)
&=
t\langle K^*(Kf_*-g^{\delta}),\eta\rangle+\frac{t^2}{2}\|K\eta\|_{H_2}^2
\\
&=
t\left(\langle K^*(Kf_*-g^{\delta}),\eta\rangle+\frac{t}{2}\|K\eta\|_{H_2}^2\right)
\ge0.
\end{aligned}
\ee
Since the inequality holds for any $t$, we have $\langle K^*(Kf_*-g^{\delta}),\eta\rangle=0$. Therefore,
\be
K^*(Kf_*-g^{\delta})=0.
\ee
\end{proof}

\subsection{Conjugate gradient method}
\hfill\vskip1mm

Even when the partial differential equation of interest is linear, its inverse problems are quite often nonlinear. In this section, the operator $K$ is not necessarily linear but can be nonlinear. Here, we develop the conjugate gradient method and solve the inverse problem of the contaminated water in a tank which we considered in Sec.~\ref{sec1}.

Let us introduce vectors $\bv{f},\bv{g}$ as
\be
\bv{f}=
\begin{pmatrix}x\\y\end{pmatrix},\quad
\bv{g}=\begin{pmatrix}A_1\\A_2\end{pmatrix},
\ee
where $A_1=x\left(1-e^{-y}\right)$, $A_2=x\left(1-e^{-2y}\right)$. Let us consider the inverse problem $\bv{g}\to\bf{f}$ of
\be
K{\bf f}={\bf g},
\ee
where the operator $K$ is introduced as
\be
K\bv{f}=\begin{pmatrix}x\left(1-e^{-y}\right)\\x\left(1-e^{-2y}\right)\end{pmatrix}.
\ee
As we assumed in Sec.~\ref{sec1}, the forward data contains noise:
\be
\bv{g}^{\delta}=
\begin{pmatrix}A_1^{\rm obs}\\A_2^{\rm obs}\end{pmatrix}=
\begin{pmatrix}
0.0791\\0.158
\end{pmatrix}.
\ee
We will solve the inverse problem by minimizing the cost function,
\be
\Phi({\bf f})=\frac{1}{2}\|K{\bf f}-{\bf g}^{\delta}\|_{\ell^2}^2+\alpha\|{\bf f}\|_{\ell^2}^2,
\ee
where the second term on the right-hand side is the Tikhonov regularization term.

Starting with an initial vector $\bv{f}^0$, the conjugate gradient method calculate recursively $\bv{f}^1,\bv{f}^2,\dots$ as \cite{Neculai20}
\be
\left\{\begin{aligned}
{\bf f}^{k+1}&={\bf f}^k+\ell^k{\bf d}^k
\\
{\bf d}^k&=
-\nabla\Phi({\bf f}^k)+\beta^k{\bf d}^{k-1}
\end{aligned}\right.
\ee
for $k=0,1,2,\dots$. Then the solution $\bv{f}_*$ is obtained as
\be
{\bf f}_*={\bf f}^0+\ell^0{\bf d}^0+\ell^1{\bf d}^1+\cdots.
\ee

If $K$ is linear, the inverse problem is stated in matrix-vector form with the matrix $\underline{K}$. In this case,
\be
\begin{aligned}
\Phi(\bv{f})
&=
\frac{1}{2}\|\underline{K}\bv{f}-\bv{g}^{\delta}\|_{\ell^2}^2+\alpha\|\bv{f}\|_{\ell^2}^2
\\
&=
\frac{1}{2}\left(\underline{K}\bv{f}-\bv{g}^{\delta}\right)^T\left(\underline{K}\bv{f}-\bv{g}^{\delta}\right)+\alpha\bv{f}^T\bv{f}
\\
&=
\frac{1}{2}\bv{f}^T\left(\underline{K}^T\underline{K}+2\alpha\underline{I}\right)\bv{f}-(\bv{g}^{\delta})^T\underline{K}\bv{f}
+\frac{1}{2}(\bv{g}^{\delta})^T\bv{g}^{\delta}
\\
&=
\frac{1}{2}\bv{f}^T\underline{Q}\bv{f}-\bv{b}^T\bv{f}
+\frac{1}{2}\|\bv{g}^{\delta}\|_{\ell^2}^2,
\end{aligned}
\ee
where
\be
\underline{Q}=\underline{K}^T\underline{K}+2\alpha\underline{I},
\quad
\bv{b}=\underline{K}^T\bv{g}^{\delta}.
\ee
We obtain
\be
\nabla\Phi(\bv{f})=\underline{Q}\bv{f}-\bv{b}.
\ee
Hence,
\be
\nabla\Phi(\bv{f}^k)=\underline{Q}\bv{f}^k-\bv{b}=:\bv{r}^k.
\ee

For a linear operator $K$, we have
\be
\left\{\begin{aligned}
\bv{f}^{k+1}&=\bv{f}^k+\ell^k\bv{d}^k
\\
\bv{d}^k&=
-\bv{r}^k+\beta^k\bv{d}^{k-1},
\end{aligned}\right.
\ee
where
\be
\beta^k=\frac{(\bv{r}^k)^T\underline{Q}\bv{d}^{k-1}}{(\bv{d}^{k-1})^T\underline{Q}\bv{d}^{k-1}}.
\ee
We obtain
\be
\begin{aligned}
(\bv{d}^k)^TQ\bv{d}^{k-1}
&=
\left(-\bv{r}^k+\beta^k\bv{d}^{k-1}\right)^T\underline{Q}\bv{d}^{k-1}
\\
&=
\left(-\bv{r}^k+\frac{(\bv{r}^k)^T\underline{Q}\bv{d}^{k-1}}{(\bv{d}^{k-1})^T\underline{Q}\bv{d}^{k-1}}\bv{d}^{k-1}\right)^T\underline{Q}\bv{d}^{k-1}
\\
&=
-(\bv{r}^k)^T\underline{Q}\bv{d}^{k-1}+
(\bv{r}^k)^T\underline{Q}\bv{d}^{k-1}=0.
\end{aligned}
\ee
Since $(\bv{d}^k)^TQ\bv{d}^{k-1}=0$, $\bv{d}^{k-1}$ and $\bv{d}^k$ are conjugate.

Now, $\ell^k$ can be determined by line search. Furthermore we have (the Polak-Ribiere-Polyak conjugate gradient \cite{Polak-Ribiere69,Polyak69}
\be
\beta^k=\frac{
\left(\nabla\Phi(\bv{f}^k)-\nabla\Phi(\bv{f}^{k-1})\right)\cdot\nabla\Phi(\bv{f}^k)}
{|\nabla\Phi(\bv{f}^{k-1})|^2}.
\ee
More choices of $\beta^k$ have been proposed. One is \cite{Fletcher-Reeves64}
\be
\beta^k=\frac{|\nabla\Phi(\bv{f}^k)|^2}{|\nabla\Phi(\bv{f}^{k-1})|^2}.
\ee
Another choice is \cite{Hestensen-Stiefel52}
\be
\beta^k=\frac{
\left(\nabla\Phi(\bv{f}^k)-\nabla\Phi(\bv{f}^{k-1})\right)\cdot\nabla\Phi(\bv{f}^k)}{
\left(\nabla\Phi(\bv{f}^k)-\nabla\Phi(\bv{f}^{k-1})\right)\cdot\bv{d}^{k-1}}.
\ee

The algorithm of the conjugate gradient method can be summarized as follows.
\begin{itemize}
\item[Step 1.] Calculate $\nabla\Phi(\bv{f}^0)=(\pp_x\Phi(\bv{f}^0)\;\;\pp_y\Phi(\bv{f}^0))^T$ for a chosen initial guess $\bv{f}^0$.
\item[Step 2.] Set the search direction $\bv{d}^0=-\nabla\Phi(\bv{f}^0)$.
\item[Step 3.] Set $k=1$.
\item[Step 4.] By line search, find $\ell^{k-1}$ that minimizes $\Phi(\bv{f}^{k-1}+\ell^{k-1}\bv{d}^{k-1})$.
\item[Step 5.] $\bv{f}^k=\bv{f}^{k-1}+\ell^{k-1}\bv{d}^{k-1}$.
\item[Step 6.] $\nabla\Phi(\bv{f}^k)=(\pp_x\Phi(\bv{f}^k)\;\;\pp_y\Phi(\bv{f}^k))^T$.
\item[Step 7.] Set the next search direction as $\bv{d}^k=-\nabla\Phi(\bv{f}^k)+\beta^k\bv{d}^{k-1}$.
\item[Step 8.] If $k=k_{\rm max}$, stop the iteration. Otherwise put $k=k+1$ and return to Step 4. 
\end{itemize}

For Step 4, different line searches have been proposed. Here we use the backtracking line search, which is given as follows.

\begin{itemize}
\item[Step 1.] Set $\ell_0^{k-1}=\ell_0$ for an initial guess $\ell_0$ (for example, $\ell_0=1$).
\item[Step 2.] Put $j=1$.
\item[Step 3.] If the inequality $\Phi(\bv{f}^{k-1}+\ell^{k-1}_{j-1}\bv{d}^{k-1})<\Phi(\bv{f}^{k-1})+\gamma\ell^{k-1}_{j-1}\nabla\Phi(\bv{f}^{k-1})\cdot\bv{d}^{k-1}$ holds, then set $\ell^{k-1}=\ell_{j-1}^{k-1}$ and finishes the line search. Otherwise set $\ell_j^{k-1}=\kappa\ell_{j-1}^{k-1}$.
\item[Step 4.] If $j=j_{\rm max}$, then set $\ell^{k-1}=\ell_{j_{\rm max}}^{k-1}$ and finish the computation. Otherwise put $j=j+1$ and return to Step 3.
\end{itemize}

In the above line search, constants $\gamma,\kappa$ are fixed (for example, $\gamma=0.1$, $\kappa=1/2$). The inequality at Step 3 is called the Armijo condition. We can naively look at the inequality
\be
\Phi(\bv{f}^{k-1}+\ell^{k-1}_{j-1}\bv{d}^{k-1})<\Phi(\bv{f}^{k-1}).
\ee
At Step 3, the term $\gamma\ell^{k-1}_{j-1}\nabla\Phi(\bv{f}^{k-1})\cdot\bv{d}^{k-1}$ was added to prevent large changes in $\bv{f}$.

The line search is an inevitable step for the conjugate gradient method but its role is supportive. Hence it is not necessary to obtain the exact $\ell^{k-1}$ which minimizes the cost function. The line search for a rough estimate is said to be the inexact line search.

Figure \ref{sec7:fig01} shows the convergence of the iteration in the conjugate gradient method for the water leakage problem in Sec.~\ref{sec1}. The regularization parameter was set to $\alpha=8\times10^{-9}$. Starting with initial values $x^0=3$, $y^0=0.01$, the calculation converges to the true values of $x=4$, $y=0.02$. The regularization parameter $8\times10^{-9}$ was chosen by trial and error. Although the $L$-curve method is know, there is no general theory of determining regularization parameters. The situation is described in Fig.~\ref{sec7:fig02}. Moreover in Fig.~\ref{sec7:fig03}, the dependence of initial guesses is investigated.

\begin{figure}[ht]
\begin{center}
\includegraphics[width=0.4\textwidth]{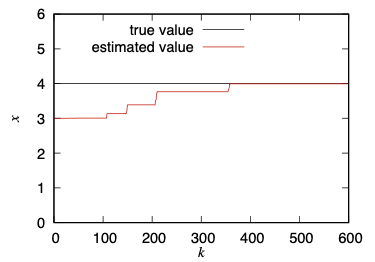}
\includegraphics[width=0.4\textwidth]{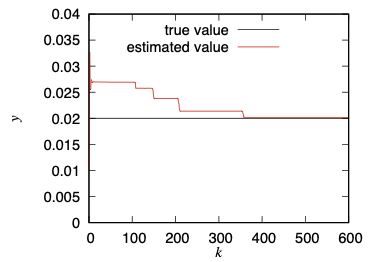}
\end{center}
\caption{
Convergence of the iteration in the conjugate gradient method for the water leakage problem in Sec.~\ref{sec1}. The parameters were set to $\alpha=8\times10^{-9}$, $x^0=3$, $y^0=0.01$. The true values are $x=4$, $y=0.02$.
}
\label{sec7:fig01}
\end{figure}

\begin{figure}[ht]
\begin{center}
\includegraphics[width=0.4\textwidth]{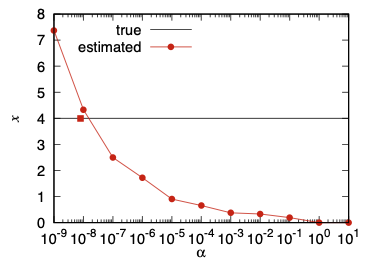}
\includegraphics[width=0.4\textwidth]{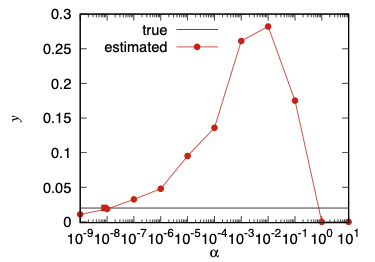}
\end{center}
\caption{
Obtained $x,y$ after $1000$ iterations are shown for different $\alpha$ when the water leakage problem in Sec.~\ref{sec1} is solved with the conjugate gradient method.
}
\label{sec7:fig02}
\end{figure}

\begin{figure}[ht]
\begin{center}
\includegraphics[width=0.8\textwidth]{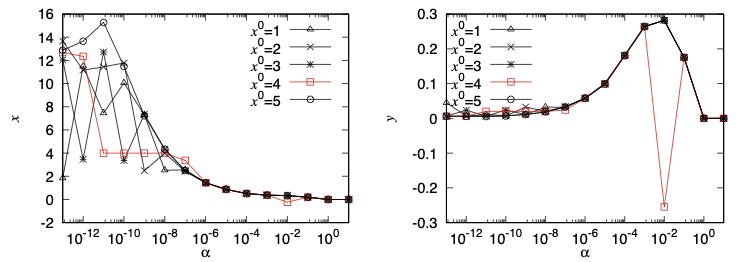}
\end{center}
\caption{
Obtained $x,y$ after $10000$ iterations are shown for different $\alpha$ when the water leakage problem in Sec.~\ref{sec1} is solved with the conjugate gradient method with different initial values ($x^0=1,2,3,4,5$, $y^0=0.01$).
}
\label{sec7:fig03}
\end{figure}

Let us further investigate how $\alpha$ should be chosen. Let $A_1^{\rm est}(\alpha),A_2^{\rm est}(\alpha)$ be results of $K\bv{f}$ in which estimated values $x,y$ are used. The error $\varepsilon^{\rm est}(\alpha)$ between observed values and $A_1^{\rm est}(\alpha),A_2^{\rm est}(\alpha)$ is given by
\be
\varepsilon^{\rm est}(\alpha)=\sqrt{\left(\frac{A_1^{\rm est}(\alpha)-A_1^{\rm obs}}{A_1^{\rm obs}}\right)^2+\left(\frac{A_2^{\rm est}(\alpha)-A_2^{\rm obs}}{A_2^{\rm obs}}\right)^2}.
\ee
Let us introduce the measurement error ($\varepsilon^{\rm obs}$) as
\be
\varepsilon^{\rm obs}=\sqrt{\left(\frac{A_1^{\rm obs}-A_1^{\rm true}}{A_1^{\rm true}}\right)^2+\left(\frac{A_2^{\rm obs}-A_2^{\rm true}}{A_2^{\rm true}}\right)^2}.
\ee
We note that there is no way to know the true values and true error $\varepsilon^{\rm obs}$. In the actual situation, we need to estimate $\varepsilon^{\rm obs}$ with other methods.

Since it is not likely that $x,y$ can be determined with accuracy exceeding the measurement error, we will choose $\alpha>0$ such that
\be
\varepsilon^{\rm est}(\alpha)\approx\varepsilon^{\rm obs}.
\ee
Results of this numerical experiment are shown in Fig.~\ref{sec7:fig04}.

\begin{figure}[ht]
\begin{center}
\includegraphics[width=0.4\textwidth]{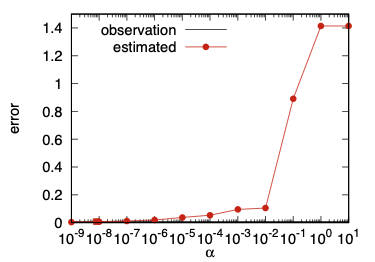}
\includegraphics[width=0.4\textwidth]{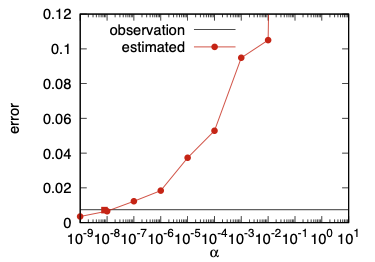}
\end{center}
\caption{
The behavior of the error as a function of $\alpha$ (Left) and a magnified figure (Right). In the right panel, the blue arrow shows $\alpha=8\times10^{-9}$, which is used in Fig.~\ref{sec7:fig01}.
}
\label{sec7:fig04}
\end{figure}

\section{Optical tomography}

\label{sec8}

Optical tomography is an imaging modality which uses near-infrared light (wavelength about from $700\,{\rm nm}$ to $1\,\mu{\rm m}$.\footnote{
The term ``tomo'' originated from $\tau$\'{o}$\mu$o$\zeta$ (tomos) in Greek, which means section, cut, or slice.
} As a result of the fact that light in biological tissue undergoes multiple scattering, the energy density $u$ of light is governed by the following diffusion equation.\footnote{
Indeed, the radiative transport equation (below) is considered to be the fundamental governing equation for the light propagation in biological tissue:
\[
\left\{\begin{aligned}
\frac{1}{c}\pp_tI+\uv\cdot\nabla I+(\mu_a+\mu_s)I=\mu_s\int_{\Sm^2}p(x,\uv,\uv')I(x,\uv',t)\,d\uv',
&\quad x\in\Omega,\quad\uv\in\Sm^2,\quad 0<t<T,
\\
I(x,\uv,t)=I_0(x,\uv,t)+(\mathcal{R}I)(x,\uv,t),
&\quad x\in\pp\Omega,\quad\uv\in\Sm^2,\quad 0<t<T,
\\
I=0,&\quad x\in\Omega,\quad\uv\in\Sm^2,\quad t=0,
\end{aligned}\right.
\]
where $I(x,\uv,t)$ is the specific intensity of light in direction $\uv\in\Sm^2$. Here, $c$ is the speed of light in the medium, $\mu_a(x),\mu_s(x)$ are absorption and scattering coefficients, respectively, and $p$ is the scattering phase function. The incident light is denoted by $I_0$ and $(\mathcal{R}I)$ is the reflected light. On a large scale of $t$ and $x$ under the condition of small $\mu_a$, the integrated specific intensity $\int_{\Sm^2}I(x,\uv,t)\,d\uv$ coincides with the solution $u(x,t)$ of the diffusion equation. This is called the diffusion approximation. In the case of constant coefficients and a position-independent phase function, we have the relation $D_0=c/(3(1-g)\mu_s)$, $g=\int_{\Sm^2}(\uv\cdot\uv')p(\uv,\uv')\,d\uv'$ and $\alpha=c\mu_a$.
}
\be
\left\{\begin{aligned}
\pp_tu+\nabla\cdot D\nabla u+\alpha u=S,&\quad x\in\Omega,\quad 0<t<T,
\\
D\pp_{\nu}u+\beta u=0,&\quad x\in\pp\Omega,\quad 0<t<T,
\\
u=0,&\quad x\in\Omega,\quad t=0,
\end{aligned}\right.
\ee
where $D(x)$ is the diffusion coefficient, $\alpha(x)$ is the absorption coefficient, and $S(x,t)$ denotes the light source. Here, $T$ is the observation time. In the boundary condition, $\pp_{\nu}$ is the directional derivative in the direction of $\nu(x)$, which is the outer unit normal vector at $x\in\pp\Omega$. The coefficient $\beta>0$ is determined by the Fresnel reflection of light on the boundary. When light is absorbed on the surface, for example, by a holder which holds optical fibers, we have the Dirichlet boundary condition.

For simplicity, we assume that $D_0=D(x)$ is a constant. If light is applied to the sample continuously in time (the continuous-wave (CW) measurement), the light obey the following time-independent diffusion equation.
\be
\left\{\begin{aligned}
-D_0\Delta u+\alpha u=S,&\quad x\in\Omega,
\\
D_0\pp_{\nu}u+\beta u=0,&\quad x\in\pp\Omega.
\end{aligned}\right.
\ee

\section{Inverse series}
\label{sec9}

The Born approximation is a method to reconstruct coefficients of a partial differential equation \cite{Isakov06}. The method reconstructs perturbation of a coefficient. Usually, the Born approximation is accompanied by linearization of nonlinear inverse problems. Here, we will see that the inversion by the Born approximation can be extended to nonlinear with the inverse series.

\subsection{Inverse Born series}
\hfill\vskip1mm

Let us set (see the scaling in Sec.~\ref{sec3:der})
\be
D_0=1,\quad \alpha(x)=k^2\left(1+\eta(x)\right),\quad\beta=\ell D_0,
\ee
where $k,\ell$ are positive constants. Since $\alpha$ is nonnegative, $\eta(x)\ge-1$. We assume that the support of $\eta$ is contained in an open ball $B_a$ of radius $a$. In particular, $\left.\eta\right|_{\pp\Omega}=0$. Then we have \cite{Moskow-Schotland08}
\be
\left\{\begin{aligned}
-\Delta u(x)+k^2(1+\eta)u(x)=S(x),&\quad x\in\Omega,
\\
\ell\pp_{\nu}u(x)+u(x)=0,&\quad x\in\pp\Omega.
\end{aligned}\right.
\ee
We suppose that coefficients $k,\ell$ and the source term $S$ are known.

To find the unknown function $\eta$, we prepare another diffusion equation:
\be
\left\{\begin{aligned}
-\Delta u_0+k^2u_0=S,&\quad x\in\Omega,
\\
\ell\pp_{\nu}u_0+u_0=0,&\quad x\in\pp\Omega.
\end{aligned}\right.
\ee
We note that there are no unknown parameters nor functions in the above diffusion equation. By subtraction we have
\be
\left\{\begin{aligned}
-\Delta (u-u_0)+k^2(u-u_0)=-k^2\eta u,&\quad x\in\Omega,
\\
\ell\pp_{\nu}(u-u_0)+u-u_0=0,&\quad x\in\pp\Omega.
\end{aligned}\right.
\ee
Let us introduce the Green's function as
\be
\left\{\begin{aligned}
-\Delta G(x,y)+k^2G(x,y)=\delta(x-y),&\quad x\in\Omega,
\\
\ell\pp_{\nu}G(x,y)+G(x,y)=0,&\quad x\in\pp\Omega.
\end{aligned}\right.
\ee
Thus we arrive at the identity:
\begin{equation}
u(x)=u_0(x)-k^2\int_{\Omega}G(x,y)\eta(y)u(y)\,dy.
\label{sec9:ident}
\end{equation}

By recursively substituting $u$ on the left-hand side of (\ref{sec9:ident}) for $u$ in the integral on the right-hand side, we obtain the Born series:
\begin{equation}
u=u_0+u_1+u_2+\cdots,
\label{sec9:Bornu0u1}
\end{equation}
where
\be
u_n(x)=-k^2\int_{\Omega}G(x,y)\eta(y)u_{n-1}(y)\,dy,\quad n\in\Nm.
\ee

Let us introduce
\be
\phi(x)=u_0(x)-u(x),\quad x\in\pp\Omega.
\ee
Then we can rewrite the Born series as
\begin{equation}
\phi=K_1\eta+K_2\eta\otimes\eta+K_3\eta\otimes\eta\otimes\eta+\cdots,
\label{sec9:fwdBorn}
\end{equation}
where
\be
u_n(x)=-K_n\eta^{\otimes n}\quad(n=1,2,\dots).
\ee
Here, $\otimes$ means tensor product. We can explicitly write
\begin{equation}
\begin{aligned}
(K_nf)(x^1,x^2)
&=
(-1)^{n+1}k^{2n}\int_{B_a\times\cdots\times B_a}
G(x^1,y^1)G(y^1,y^2)\cdots G(y^{n-1},y^n)
\\
&\times
G(y^n,x^2)f(y^1,\dots,y^n)\,dy^1\cdots dy^n,\quad x^1,x^2\in\pp\Omega,
\end{aligned}
\label{Kdef}
\end{equation}
where $f\in L^{\infty}(B_a\times\cdots\times B_a)$ is a multilinear function.

From the Born series (\ref{sec9:fwdBorn}), we can write the following inverse Born series.
\begin{equation}
\eta=\mathcal{K}_1\phi+\mathcal{K}_2\phi\otimes\phi+\mathcal{K}_3\phi\otimes\phi\otimes\phi+\cdots.
\label{sec9:invBorn}
\end{equation}
The notion of the inverse series already existed. Let us consider a simple power-law series:
\be
y=a_1x+a_2x^2+a_3x^3+\cdots.
\ee
It is possible to express $x$ as a series in $y$:
\be
x=A_1y+A_2y^2+A_3y^3+\cdots,
\ee
where the first a few terms are given by (see, for example, \cite{Dwight57})
\be
A_1=\frac{1}{a_1},\quad A_2=-\frac{a_2}{a_3},\quad
A_3=\frac{2a_2^2-a_1a_3}{a_1^5}.
\ee
Here, $K_n,\mathcal{K}_n$ are operators and we wish to obtain the general terms in the series instead of just the first a few terms.

Let us substitute $\phi$ in (\ref{sec9:fwdBorn}) for $\phi$ on the right-hand side of (\ref{sec9:invBorn}). By comparing both sides of the resulting equation, we obtain the following relations up to the third order terms.
\be
\begin{aligned}
&
\mathcal{K}_1K_1=I,
\\
&
\mathcal{K}_2K_1\otimes K_1+
\mathcal{K}_1K_2=0,
\\
&
\mathcal{K}_3K_1\otimes K_1\otimes K_1+
\mathcal{K}_2K_1\otimes K_2+\mathcal{K}_2K_2\otimes K_1+
\mathcal{K}_1K_3=0.
\end{aligned}
\ee
Hence we obtain
\be
\begin{aligned}
\mathcal{K}_2&=-\mathcal{K}_1K_2\mathcal{K}_1\otimes\mathcal{K}_1,
\\
\mathcal{K}_3&=-\left(\mathcal{K}_2K_1\otimes K_2+
\mathcal{K}_2K_2\otimes K_1+
\mathcal{K}_1K_3\right)\mathcal{K}_1\otimes\mathcal{K}_1\otimes\mathcal{K}_1.
\end{aligned}
\ee
In general, we have the relations
\be
\sum_{m=1}^{n-1}\mathcal{K}_m\sum_{i_1+\cdots+i_m=n}K_{i_1}\otimes\cdots\otimes K_{i_m}+
\mathcal{K}_nK_1\otimes\cdots\otimes K_1=0,\quad
n\ge2.
\ee
Thus general terms are given by \cite{Markel-OSullivan-Schotland03,Moskow-Schotland08}
\begin{equation}
\mathcal{K}_n=-\left(
\sum_{m=1}^{n-1}\mathcal{K}_m\sum_{i_1+\cdots+i_m=n}K_{i_1}\otimes\cdots\otimes K_{i_m}\right)
\mathcal{K}_1\otimes\cdots\otimes\mathcal{K}_1,\quad
n\ge2.
\label{sec9:invgenterms}
\end{equation}
For nonlinear terms, the number of $n$th-order terms is
\be
\sum_{m=1}^{n-1}\begin{pmatrix}n-1\\m-1\end{pmatrix}=2^{n-1}-1
\quad(n=2,3,\dots),
\ee
where $\begin{pmatrix}n-1\\m-1\end{pmatrix}$ is the number of ordered partitions of the integer $n$ into $m$ parts. 

The first term of the inverse Born series (i.e., the conventional Born approximation) needs some care. Indeed, $\mathcal{K}_1K_1=I$ does not hold due to the fact that the inverse problem is ill-posed. We have $\mathcal{K}_1K_1\approx I$. Thus, $\mathcal{K}_1$ is a regularized pseudoinverse (e.g., truncated SVD), which can be given by \cite{Machida-Schotland15}
\be
\eta^*=\mathop{\mathrm{argmin}}_{\eta\in B_a}\left(
\frac{1}{2}\left\|K_1\eta-\phi\right\|_{L^2(\pp\Omega)}^2+\alpha R(\eta)
\right),
\ee
where $R(\eta)$ is the penalty term with the regularization parameter $\alpha>0$.

Let us introduce
\be
\mu_{\infty}=k^2\sup_{x\in B_a}\|G(x,\cdot)\|_{L^1(B_a)},\quad
\nu_{\infty}=k^2|B_a|\sup_{x\in B_a}\sup_{y\in\pp\Omega}|G(x,y)|^2.
\ee

\begin{lem}
\label{sec9:lem1}
For each $n\in\Nm$, the operator $K_n\colon L^{\infty}(B_a\times\cdots\times B_a)\to L^{\infty}(\pp\Omega\times\pp\Omega)$ is bounded and
\be
\|K_n\|_{\infty}\le\nu_{\infty}\mu_{\infty}^{n-1}.
\ee
\end{lem}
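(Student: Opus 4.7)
The approach is a direct pointwise estimate of $(K_n f)(x^1,x^2)$ starting from the integral representation (\ref{Kdef}), combining two distinct kinds of control on the Green's function $G$: $L^{\infty}$ control when one argument lies on $\pp\Omega$ and the other in $B_a$, and $L^1$ control when both arguments lie in $B_a$. Setting $M:=\sup_{x\in B_a}\sup_{y\in\pp\Omega}|G(x,y)|$, so that $\nu_{\infty}=k^2|B_a|M^2$, and noting $\mu_{\infty}/k^2=\sup_{x\in B_a}\|G(x,\cdot)\|_{L^1(B_a)}$, the whole proof is driven by these two one-line estimates.

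First I would take absolute values under the integral and pull out $\|f\|_{L^{\infty}(B_a^n)}$, giving
\be
|(K_n f)(x^1,x^2)|\le k^{2n}\|f\|_{\infty}\int_{B_a^n}|G(x^1,y^1)|\,|G(y^1,y^2)|\cdots|G(y^{n-1},y^n)|\,|G(y^n,x^2)|\,dy^1\cdots dy^n.
\ee
The two outer factors $|G(x^1,y^1)|$ and $|G(y^n,x^2)|$ are each of ``boundary--$B_a$'' type, and by the symmetry $G(x,y)=G(y,x)$ (which holds because $-\Delta+k^2$ with the Robin condition is self-adjoint) each is bounded pointwise by $M$. Pulling out $M^2$, the remaining task is to estimate $\int_{B_a^n}|G(y^1,y^2)|\cdots|G(y^{n-1},y^n)|\,dy^1\cdots dy^n$, a chain of $n-1$ Green's-function factors in $n$ variables. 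I would integrate iteratively from the endpoint $y^n$ inward: for $j=n,n-1,\dots,2$, at the moment of the $y^j$-integration that variable appears in only one surviving factor $|G(y^{j-1},y^j)|$, so $\int_{B_a}|G(y^{j-1},y^j)|\,dy^j\le\mu_{\infty}/k^2$ uniformly in $y^{j-1}$. After $n-1$ such integrations all Green's-function factors are consumed, and the trivial final integral over $y^1\in B_a$ contributes a factor $|B_a|$. Collecting everything,
\be
|(K_nf)(x^1,x^2)|\le k^{2n}M^2|B_a|(\mu_{\infty}/k^2)^{n-1}\|f\|_{\infty}=k^2M^2|B_a|\mu_{\infty}^{n-1}\|f\|_{\infty}=\nu_{\infty}\mu_{\infty}^{n-1}\|f\|_{\infty},
\ee
which is the claimed bound.

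The case $n=1$ is handled directly: there are no interior Green's-function factors, so $|(K_1f)(x^1,x^2)|\le k^2\|f\|_{\infty}\int_{B_a}|G(x^1,y^1)||G(y^1,x^2)|\,dy^1\le k^2M^2|B_a|\,\|f\|_{\infty}=\nu_{\infty}\|f\|_{\infty}$, agreeing with the general formula via $\mu_{\infty}^{0}=1$. There is no genuine obstacle here; the only care needed is to integrate the chain from an endpoint inward, since each \emph{interior} variable $y^j$ ($2\le j\le n-1$) initially appears in two neighbouring factors and cannot be integrated first. The one non-combinatorial input is the Green's-function symmetry, needed to identify the two outer ``boundary--$B_a$'' factors with the quantity $M$ defined as a supremum over $B_a\times\pp\Omega$ rather than $\pp\Omega\times B_a$.
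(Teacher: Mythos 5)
Your proof is correct and follows essentially the same route as the paper: pull out $\|f\|_{\infty}$, bound the two boundary-to-$B_a$ factors of $G$ in sup norm to produce $\nu_{\infty}$ (up to the $k^2|B_a|$ bookkeeping), and consume the interior chain of $n-1$ Green's functions one endpoint at a time using $\sup_x\|G(x,\cdot)\|_{L^1(B_a)}$, which is exactly the paper's recursion $I_{n-1}\le\mu_{\infty}I_{n-2}$. Your explicit remarks on the symmetry of $G$ and on why one must integrate from an endpoint inward are minor clarifications the paper leaves implicit.
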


\begin{proof}
From (\ref{Kdef}), we have
\ba
\|K_nf\|_{L^{\infty}(\pp\Omega\times\pp\Omega)}
&=
\sup_{(x^1,x^2)\in\pp\Omega\times\pp\Omega}|(K_nf)(x^1,x^2)|
\\
&\le
k^{2n}\|f\|_{\infty}\sup_{(x^1,x^2)\in\pp\Omega\times\pp\Omega}
\int_{B_a\times\cdots\times B_a}
|G(x^1,y^1)\cdots G(y^n,x^2)|\,dy^1\cdots dy^n.
\ea
We begin by estimating the above integral for $n=1$:
\be
\|K_1\|_{\infty}\le
k^2\sup_{(x^1,x^2)\in\pp\Omega\times\pp\Omega}\int_{B_a}|G(x^1,y)G(y,x^2)|\,dy
\le
k^2|B_a|\sup_{x\in B_a}\sup_{y\in\pp\Omega}|G(x,y)|^2.
\ee
For $n\ge2$, we take out the first and last factors of $G$ in the integral to obtain
\ba
\|K_n\|_{\infty}
&\le
\sup_{(x^1,x^2)\in\pp\Omega\times\pp\Omega}\sup_{y^1\in B_a,\;y^n\in B_a}
|G(x^1,y)G(y^n,x^2)|
\\
&\le
k^{2n}\int_{B_a\times\cdots\times B_a}|G(y^1,y^2)\cdot G(y^{n-1},y^n)|
\,dy^1\cdots dy^n.
\ea
Then we obtain
\be
\|K_n\|_{\infty}\le\left(\sup_{x\in B_a}\sup_{y\in\pp\Omega}|G(x,y)|\right)^2I_{n-1},
\ee
where
\be
I_{n-1}=k^{2n}\int_{B_a\times\cdots\times B_a}|G(y^1,y^2)\cdots G(y^{n-1},y^n)|\,dy^1\cdots dy^n.
\ee
We note that $I_{n-1}$ can be estimated recursively. Since
\ba
I_{n-1}
&\le
k^2\sup_{y^{n-1}\in B_a}\int_{B_a}|G(y^{n-1},y^n)|\,dy^n
\\
&\times
k^{2n-2}\int_{B_a\times\cdots\times B_a}|G(y^1,y^2)\cdots G(y^{n-2},y^{n-1})|\,dy^1\cdots dy^{n-1},
\ea
we obtain
\be
I_{n-1}\le\mu_{\infty}I_{n-2}.
\ee

We have
\be
I_1=k^4\int_{B_a\times B_a}|G(x,y)|\,dxdy\le k^2|B_a|\mu_{\infty}.
\ee
Thus,
\be
I_{n-1}\le k^2|B_a|\mu_{\infty}^{n-1}.
\ee
\end{proof}

\begin{lem}
\label{sec9:lem2}
Suppose $(\mu_{\infty}+\nu_{\infty})\|\mathcal{K}_1\|_{\infty}<1$. For each $n\in\Nm$, the operator $\mathcal{K}_n\colon L^{\infty}(\pp\Omega\times\cdots\times \pp\Omega)\to L^{\infty}(B_a)$ is bounded and
\be
\|\mathcal{K}_n\|_{\infty}\le
C(\nu_{\infty}+\mu_{\infty})^n\|\mathcal{K}_1\|_{\infty}^n,
\ee
where $C=C(\mu_{\infty},\nu_{\infty},\|\mathcal{K}_1\|_{\infty})$ is independent of $n$.
\end{lem}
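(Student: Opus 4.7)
The strategy is induction on $n$, driven by the recursion (\ref{sec9:invgenterms}) together with the operator-norm bound from Lemma \ref{sec9:lem1}. I abbreviate $r = \|\mathcal{K}_1\|_\infty$, $s = \mu_\infty + \nu_\infty$, and $a_n = \|\mathcal{K}_n\|_\infty$, so that the target reads $a_n \le C s^n r^n$, and the hypothesis becomes $\rho := rs < 1$.

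First I would derive a scalar recursion for $a_n$. Applying the triangle inequality to (\ref{sec9:invgenterms}), using multiplicativity of the $L^\infty$ operator norm across tensor products, and invoking Lemma \ref{sec9:lem1} to estimate $\|K_{i_j}\|_\infty \le \nu_\infty \mu_\infty^{i_j - 1}$, I get
\[
a_n \le r^n \sum_{m=1}^{n-1} a_m \sum_{\substack{i_1+\cdots+i_m=n \\ i_j \ge 1}} \nu_\infty^m \mu_\infty^{n-m}.
\]
Each composition contributes the same weight, and there are $\binom{n-1}{m-1}$ compositions of $n$ into $m$ positive parts, so
\[
a_n \le r^n \sum_{m=1}^{n-1} a_m \binom{n-1}{m-1} \nu_\infty^m \mu_\infty^{n-m}.
\]

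Next I would run the induction with the ansatz $a_m \le C s^m r^m$, choosing $C = 1/s$ so that the base case $a_1 = r$ holds with equality. Substituting the hypothesis into the recursive bound and reindexing $k = m-1$ turns the right-hand side into a binomial sum,
\[
a_n \le C r^n (\rho \nu_\infty) \sum_{k=0}^{n-2} \binom{n-1}{k} (\rho \nu_\infty)^{k} \mu_\infty^{n-1-k} \le C r^n \rho \nu_\infty \bigl[(\rho\nu_\infty + \mu_\infty)^{n-1} - (\rho \nu_\infty)^{n-1}\bigr].
\]
The smallness $\rho < 1$ gives $\rho\nu_\infty + \mu_\infty \le s$, so the bracketed quantity is at most $s^{n-1}$, while separately $\rho \nu_\infty \le \nu_\infty \le s$; multiplying these yields $a_n \le C s^n r^n$, closing the induction.

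The only real content is the algebraic step that turns the inductive hypothesis into a binomial expansion and produces the factor $(\rho \nu_\infty + \mu_\infty)^{n-1}$; the smallness assumption enters exclusively through the bound $\rho\nu_\infty + \mu_\infty \le s$ needed to dominate this. Everything else, including the counting of compositions and the tensor-product norm estimates, is routine bookkeeping.
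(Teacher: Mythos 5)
Your proposal is correct. The opening reduction is identical to the paper's: take norms in (\ref{sec9:invgenterms}), apply Lemma \ref{sec9:lem1} to each factor $\|K_{i_j}\|_{\infty}\le\nu_{\infty}\mu_{\infty}^{i_j-1}$, and count the $\binom{n-1}{m-1}$ compositions to arrive at the scalar recursion $a_n\le r^n\sum_{m=1}^{n-1}a_m\binom{n-1}{m-1}\nu_{\infty}^m\mu_{\infty}^{n-m}$. Where you diverge is in how this recursion is closed. The paper decouples the two factors via $\sum_m a_m b_m\le(\sum_m a_m)(\sum_m b_m)$, evaluates the binomial sum separately to get $a_n\le(\mu_{\infty}+\nu_{\infty})^n r^n\sum_{m=1}^{n-1}a_m$, and then solves this by writing $a_n\le C_n\rho^n\|\mathcal{K}_1\|_{\infty}$ with $C_n=\prod_{m}(1+\rho^m)$, bounding $\ln C_n$ by the geometric series $\sum\rho^m$ using $\rho=(\mu_{\infty}+\nu_{\infty})\|\mathcal{K}_1\|_{\infty}<1$. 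You instead keep the $m$-dependence coupled, substitute the inductive ansatz $a_m\le C\rho^m$ directly into the binomial sum so that the weights $(\rho\nu_{\infty})^m$ re-assemble into $(\rho\nu_{\infty}+\mu_{\infty})^{n-1}$, and invoke the smallness only through $\rho\nu_{\infty}+\mu_{\infty}\le\mu_{\infty}+\nu_{\infty}$. Both arguments are valid and both use the hypothesis $\rho<1$ in an essential way; yours is somewhat cleaner in that it avoids the infinite product and produces the explicit constant $C=1/(\mu_{\infty}+\nu_{\infty})$, while the paper's decoupling is cruder but makes the recursive structure of the bound more visible (which it then reuses in the proof of Theorem \ref{sec9:errorthm}).
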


\begin{proof}
From (\ref{sec9:invgenterms}), we have
\ba
\|\mathcal{K}_n\|_{\infty}
&\le
\sum_{m=1}^{n-1}\sum_{i_1+\cdots+i_m=n}\|\mathcal{K}_m\|_{\infty}
\|K_{i_1}\|_{\infty}\cdots\|K_{i_m}\|_{\infty}\|\mathcal{K}_1\|_{\infty}^n
\\
&\le
\|\mathcal{K}_1\|_{\infty}^n\sum_{m=1}^{n-1}\sum_{i_1+\cdots+i_m=n}
\|\mathcal{K}_m\|_{\infty}\nu_{\infty}\mu_{\infty}^{i_1-1}\cdots
\nu_{\infty}\mu_{\infty}^{i_m-1},
\ea
where we have used Lemma \ref{sec9:lem1} to obtain the second inequality. Furthermore,
\ba
\|\mathcal{K}_n\|_{\infty}
&\le
\|\mathcal{K}_1\|_{\infty}^n\sum_{m=1}^{n-1}\|\mathcal{K}_m\|_{\infty}
\begin{pmatrix}n-1\\m-1\end{pmatrix}\nu_{\infty}^m\mu_{\infty}^{n-m}
\\
&\le
\|\mathcal{K}_1\|_{\infty}^n\left(\sum_{m=1}^{n-1}\|\mathcal{K}_m\|_{\infty}\right)
\left(\sum_{m=1}^{n-1}\begin{pmatrix}n-1\\m-1\end{pmatrix}\nu_{\infty}^m\mu_{\infty}^{n-m}\right)
\\
&\le
\nu_{\infty}\|\mathcal{K}_1\|_{\infty}^n\left(\sum_{m=1}^{n-1}\|\mathcal{K}_m\|_{\infty}\right)
\left(\sum_{m=0}^{n-1}\begin{pmatrix}n-1\\m\end{pmatrix}\nu_{\infty}^m\mu_{\infty}^{n-1-m}\right)
\\
&=
\nu_{\infty}\|\mathcal{K}_1\|_{\infty}^n(\mu_{\infty}+\nu_{\infty})^{n-1}
\sum_{m=1}^{n-1}\|\mathcal{K}_m\|_{\infty}.
\ea
Thus, $\mathcal{K}_m$ is a bounded operator and
\be
\|\mathcal{K}_n\|_{\infty}\le
(\nu_{\infty}+\mu_{\infty})^n\|\mathcal{K}_1\|_{\infty}^n
\sum_{m=1}^{n-1}\|\mathcal{K}_m\|_{\infty}.
\ee
We note that the above estimate for $\|\mathcal{K}_n\|_{\infty}$ has a recursive structure. It can be seen that
\be
\|\mathcal{K}_n\|_{\infty}\le
C_n\left((\nu_{\infty}+\mu_{\infty})\|\mathcal{K}_1\|_{\infty}\right)^n
\|\mathcal{K}_1\|_{\infty},
\ee
where $C_2=1$ and
\be
C_{n+1}=C_n+\left((\nu_{\infty}+\mu_{\infty})\|\mathcal{K}_1\|_{\infty}\right)^nC_n,\quad n\ge2.
\ee
We obtain
\be
C_n=\prod_{m=2}^{n-1}\left(1+((\nu_{\infty}+\mu_{\infty})\|\mathcal{K}_1\|_{\infty})^m\right).
\ee
We have
\ba
\ln C_n
&\le
\sum_{m=1}^{n-1}\ln\left(1+((\nu_{\infty}+\mu_{\infty})\|\mathcal{K}_1\|_{\infty})^m\right)
\le
\sum_{m=1}^{n-1}\left((\nu_{\infty}+\mu_{\infty})\|\mathcal{K}_1\|_{\infty}\right)^m
\\
&\le
\frac{1}{1-(\nu_{\infty}+\mu_{\infty})\|\mathcal{K}_1\|_{\infty}},
\ea
where the final inequality follows if $(\mu_{\infty}+\nu_{\infty})\|\mathcal{K}_1\|_{\infty}<1$. Therefore, all $C_n$ are bounded.
\end{proof}

We have the following error estimate for the inverse Born series \cite{Moskow-Schotland08}.

\begin{thm}
\label{sec9:errorthm}
We assume that the Born series and inverse Born series converge. Suppose that $\|\mathcal{K}_1\|_{\infty}<1/(\mu_{\infty}+\nu_{\infty})$ and $\|\mathcal{K}_1\phi\|_{L^{\infty}(B_a)}<1/(\mu_{\infty}+\nu_{\infty})$. Let $M=\max(\|\eta\|_{L^{\infty}(B_a)},\|\mathcal{K}_1K_1\eta\|_{L^{\infty}(B_a)})$ and assume that $M<1/(\mu_{\infty}+\nu_{\infty})$. Then there exists a positive constant $C=C(\mu_{\infty},\nu_{\infty},\|\mathcal{K}_1\|_{\infty},M)$ such that
\be
\left\|\eta-\sum_{n=1}^N\mathcal{K}_n\phi\otimes\cdots\otimes\phi\right\|_{L^{\infty}(B_a)}
\le
C\|(I-\mathcal{K}_1K_1)\eta\|_{L^{\infty}(B_a)}+
C\frac{\left[(\mu_{\infty}+\nu_{\infty})\|\mathcal{K}_1\|_{\infty}\|\phi\|_{L^{\infty}(\pp\Omega)}\right]^N}{1-(\mu_{\infty}+\nu_{\infty})\|\mathcal{K}_1\|_{\infty}\|\phi\|_{L^{\infty}(\pp\Omega)}}.
\ee
\end{thm}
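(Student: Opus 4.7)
The plan is to split the truncated error into two pieces: a purely algebraic mismatch arising from $\mathcal{K}_1 K_1 \neq I$, and a truncation tail controlled by geometric convergence. First I would substitute the Born series (\ref{sec9:Bornu0u1}) for $\phi$ into $\sum_{n=1}^{\infty}\mathcal{K}_n\phi^{\otimes n}$ and regroup by total powers of $\eta$. Writing $L_k := \sum_{m=1}^{k-1}\mathcal{K}_m\sum_{i_1+\cdots+i_m=k}K_{i_1}\otimes\cdots\otimes K_{i_m}$, the defining recursion (\ref{sec9:invgenterms}) reads $\mathcal{K}_k = -L_k\,\mathcal{K}_1^{\otimes k}$, so that $\mathcal{K}_k K_1^{\otimes k} = -L_k(\mathcal{K}_1 K_1)^{\otimes k}$. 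Hence, setting $\eta^\star := \sum_{n=1}^\infty \mathcal{K}_n\phi^{\otimes n}$, the coefficient of $\eta^{\otimes k}$ collapses for $k\ge 2$ to $L_k\bigl(I-(\mathcal{K}_1K_1)^{\otimes k}\bigr)$, while for $k=1$ it is $\mathcal{K}_1 K_1$. Therefore
\[
\eta-\eta^\star \;=\; (I-\mathcal{K}_1K_1)\eta \;-\; \sum_{k=2}^{\infty} L_k\bigl(\eta^{\otimes k}-(\mathcal{K}_1K_1\eta)^{\otimes k}\bigr).
\]

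The second step is a telescoping identity,
\[
\eta^{\otimes k}-(\mathcal{K}_1K_1\eta)^{\otimes k} \;=\; \sum_{j=1}^{k} \eta^{\otimes(j-1)}\otimes\bigl((I-\mathcal{K}_1K_1)\eta\bigr)\otimes(\mathcal{K}_1K_1\eta)^{\otimes(k-j)},
\]
whose $L^\infty$ norm is at most $kM^{k-1}\|(I-\mathcal{K}_1K_1)\eta\|_{L^\infty(B_a)}$ since $\max(\|\eta\|,\|\mathcal{K}_1K_1\eta\|)\le M$. For $\|L_k\|_\infty$, inserting Lemmas \ref{sec9:lem1} and \ref{sec9:lem2} into the definition of $L_k$ and using $\#\{i_1+\cdots+i_m=k\}=\binom{k-1}{m-1}$ together with the binomial theorem gives $\|L_k\|_\infty\le C(\mu_\infty+\nu_\infty)^{k-1}$, after bounding $(\mu_\infty+\nu_\infty)\|\mathcal{K}_1\|_\infty\nu_\infty+\mu_\infty$ by $\mu_\infty+\nu_\infty$ via $(\mu_\infty+\nu_\infty)\|\mathcal{K}_1\|_\infty<1$. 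Summing $k\,M^{k-1}(\mu_\infty+\nu_\infty)^{k-1}$ over $k\ge 2$ converges because $M(\mu_\infty+\nu_\infty)<1$, and yields $\|\eta-\eta^\star\|_{L^\infty(B_a)}\le C\|(I-\mathcal{K}_1K_1)\eta\|_{L^\infty(B_a)}$, which is the first term of the claimed bound.

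For the truncation tail, Lemma \ref{sec9:lem2} gives $\|\mathcal{K}_n\phi^{\otimes n}\|_{L^\infty(B_a)}\le C[(\mu_\infty+\nu_\infty)\|\mathcal{K}_1\|_\infty\|\phi\|_{L^\infty(\pp\Omega)}]^n$, so
\[
\Bigl\|\sum_{n=N+1}^{\infty}\mathcal{K}_n\phi^{\otimes n}\Bigr\|_{L^\infty(B_a)} \;\le\; C\sum_{n=N+1}^{\infty}\bigl[(\mu_\infty+\nu_\infty)\|\mathcal{K}_1\|_\infty\|\phi\|_{L^\infty(\pp\Omega)}\bigr]^n,
\]
which is a convergent geometric series under the smallness hypotheses and evaluates to precisely the second term in the estimate. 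Combining with $\eta-\sum_{n=1}^N\mathcal{K}_n\phi^{\otimes n}=(\eta-\eta^\star)+\sum_{n=N+1}^\infty\mathcal{K}_n\phi^{\otimes n}$ via the triangle inequality completes the argument.

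The main obstacle will be justifying the formal rearrangement of the doubly-indexed sum—over $n$ and over the multi-indices produced by expanding each $\phi^{\otimes n}$—into a single series graded by the total order $k$; this reordering rests on absolute convergence, which I would establish by reapplying the bounds of Lemmas \ref{sec9:lem1} and \ref{sec9:lem2}. A secondary technical point is the careful bookkeeping for $\|L_k\|_\infty$ to ensure that its growth rate is at most $(\mu_\infty+\nu_\infty)^{k-1}$, so that the series $\sum k M^{k-1}\|L_k\|_\infty$ converges exactly under the hypothesis $M<1/(\mu_\infty+\nu_\infty)$.
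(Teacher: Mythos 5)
Your proposal is correct and follows essentially the same route as the paper: substitute the Born series into the inverse Born series, use the recursion (\ref{sec9:invgenterms}) to collapse the $k$th coefficient to $L_k\bigl(I-(\mathcal{K}_1K_1)^{\otimes k}\bigr)$, apply the telescoping identity with the bound $kM^{k-1}\|(I-\mathcal{K}_1K_1)\eta\|_{L^{\infty}(B_a)}$, control the operator norms via Lemmas \ref{sec9:lem1} and \ref{sec9:lem2}, and bound the truncation tail by a geometric series. The only differences are cosmetic (your explicit naming of $L_k$ and the absolute-convergence remark, which the paper leaves implicit).
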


\begin{proof}
Let us write
\be
\widetilde{\eta}=\sum_{n=1}^{\infty}\mathcal{K}_n\phi\otimes\cdots\otimes\phi.
\ee
We note that
\be
\phi=\sum_{n=1}^{\infty}K_n\eta\otimes\cdots\otimes\eta.
\ee
By substitution we obtain
\be
\widetilde{\eta}=\sum_{n=1}^{\infty}\widetilde{\mathcal{K}}_n\eta\otimes\cdots\otimes\eta,
\ee
where
\be
\widetilde{\mathcal{K}}_1=\mathcal{K}_1K_1,
\ee
and
\be
\widetilde{\mathcal{K}}_n=\left(\sum_{m=1}^{n-1}\mathcal{K}_m\sum_{i_1+\cdots+i_m=n}K_{i_1}\otimes\cdots\otimes K_{i_m}\right)
+\mathcal{K}_nK_1\otimes\cdots\otimes K_1,\quad n\ge2.
\ee
From (\ref{sec9:invgenterms}), it follows that
\be
\widetilde{\mathcal{K}}_n=\sum_{m=1}^{n-1}\mathcal{K}_m\sum_{i_1+\cdots+i_m=n}K_{i_1}\otimes\cdots\otimes K_{i_m}
\left(I-\mathcal{K}_1K_1\otimes\cdots\otimes\mathcal{K}_1K_1\right).
\ee
Hence,
\be
\widetilde{\eta}=
\mathcal{K}_1K_1\eta+\widetilde{\mathcal{K}}_2\eta\otimes\eta+\cdots.
\ee
We thus obtain
\ba
\eta-\widetilde{\eta}
&=
\left(I-\mathcal{K}_1K_1\right)\eta-\mathcal{K}_1K_2
\left(\eta\otimes\eta-\mathcal{K}_1K_1\eta\otimes\mathcal{K}_1K_1\eta\right)
\\
&-
\mathcal{K}_1K_3\left(\eta\otimes\eta\otimes\eta-\mathcal{K}_1K_1\eta\otimes\mathcal{K}_1K_1\eta\otimes\mathcal{K}_1K_1\eta\right)
\\
&-
\mathcal{K}_2\left(K_1\otimes K_2+K_2\otimes K_1\right)\left(\eta\otimes\eta\otimes\eta-\mathcal{K}_1K_1\eta\otimes\mathcal{K}_1K_1\eta\otimes\mathcal{K}_1K_1\eta\right)
\\
&-\cdots.
\ea
From the above equality we have
\ba
\|\eta-\widetilde{\eta}\|_{L^{\infty}(B_a)}
&\le
\sum_{n=1}^{\infty}\sum_{m=1}^{n-1}\sum_{i_1+\cdots+i_m=n}
\|\mathcal{K}_m\|_{\infty}\|K_{i_1}\|_{\infty}\cdots\|K_{i_m}\|_{\infty}
\\
&\times
\left\|\eta\otimes\cdots\otimes\eta-\mathcal{K}_1K_1\eta\otimes\cdots\otimes\mathcal{K}_1K_1\eta\right\|_{\infty}.
\ea
For the forward data $\phi_1,\phi_2$, the following identity holds:
\ba
&
\phi_1\otimes\cdots\otimes\phi_1-\phi_2\otimes\cdots\otimes\phi_2
\\
&=
(\phi_1-\phi_2)\otimes\phi_2\otimes\cdots\otimes\phi_2+
\phi_1\otimes(\phi_1-\phi_2)\otimes\phi_2\otimes\cdots\otimes\phi_2
\\
&+\cdots+
\phi_1\otimes\phi_1\otimes\cdots\otimes(\phi_1-\phi_2)\otimes\phi_2+
\phi_1\otimes\phi_1\otimes\cdots\otimes\phi_1\otimes(\phi_1-\phi_2).
\ea
We obtain
\be
\left\|\eta\otimes\cdots\otimes\eta-\mathcal{K}_1K_1\eta\otimes\cdots\otimes\mathcal{K}_1K_1\eta\right\|_{\infty}
\le
nM^{n-1}\|\psi\|_{L^{\infty}(B_a)},
\ee
where
\be
\psi=\eta-\mathcal{K}_1K_1\eta.
\ee
Using Lemma \ref{sec9:lem1}, we have
\ba
\|\eta-\widetilde{\eta}\|_{L^{\infty}(B_a)}
&\le
\sum_{n=1}^{\infty}\sum_{m=1}^{n-1}\sum_{i_1+\cdots+i_m=n}
\|\mathcal{K}_m\|_{\infty}\|K_{i_1}\|_{\infty}\cdots\|K_{i_m}\|_{\infty}
nM^{n-1}\|\psi\|_{L^{\infty}(B_a)}
\\
&\le
\sum_{n=1}^{\infty}\sum_{m=1}^{n-1}nM^{n-1}\|\mathcal{K}_m\|_{\infty}
\begin{pmatrix}n-1\\m-1\end{pmatrix}\nu_{\infty}^m\mu_{\infty}^{n-m}
\|\psi\|_{L^{\infty}(B_a)}
\\
&\le
\nu_{\infty}\sum_{n=1}^{\infty}\|\psi\|_{L^{\infty}(B_a)}nM^{n-1}
\left(\sum_{m=1}^{n-1}\|\mathcal{K}_m\|_{\infty}\right)
\left(\sum_{m=0}^{n-1}\begin{pmatrix}n-1\\m\end{pmatrix}
\nu_{\infty}^m\mu_{\infty}^{n-1-m}\right)
\\
&\le
\|\psi\|_{L^{\infty}(B_a)}\sum_{n=1}^{\infty}\sum_{m=1}^{n-1}nM^{n-1}
(\mu_{\infty}+\nu_{\infty})^n\|\mathcal{K}_m\|_{\infty}.
\ea

We now apply Lemma \ref{sec9:lem2} to obtain
\be
\|\eta-\widetilde{\eta}\|_{\infty}
\le
C\|\psi\|_{L^{\infty}(B_a)}\sum_{n=1}^{\infty}\sum_{m=1}^{n-1}nM^{n-1}
(\mu_{\infty}+\nu_{\infty})^{m+n}\|\mathcal{K}_1\|_{\infty}^m,
\ee
where the constant $C>0$ is independent of $m,n$. Performing the sum over $m$, we have
\be
\|\eta-\widetilde{\eta}\|_{\infty}
\le
C\|\psi\|_{L^{\infty}(B_a)}\sum_{n=1}^{\infty}nM^{n-1}
(\mu_{\infty}+\nu_{\infty})^n
\frac{(\mu_{\infty}+\nu_{\infty})^n\|\mathcal{K}_1\|_{\infty}^n-1}{(\mu_{\infty}+\nu_{\infty})\|\mathcal{K}_1\|_{\infty}-1}.
\ee
Recall $M(\mu_{\infty}+\nu_{\infty})<1$ and $(\mu_{\infty}+\nu_{\infty})\|\mathcal{K}_1\|_{\infty}<1$. Hence, there exists a positive constant $C=C(\mu_{\infty},\nu_{\infty},M,\|\mathcal{K}_1\|_{\infty})$ such that
\be
\|\eta-\widetilde{\eta}\|_{\infty}\le C\|(I-\mathcal{K}_1K_1)\eta\|_{\infty}.
\ee
Finally, using the triangle inequality, we can account for the error which arises from cutting off the remainder of the series. We thus obtain
\ba
&
\left\|\eta-\sum_{n=1}^N\mathcal{K}_n\phi\otimes\cdots\otimes\phi\right\|_{L^{\infty}(B_a)}
\\
&\le
\left\|\eta-\sum_{n=1}^{\infty}\widetilde{\mathcal{K}}_n\eta\otimes\cdots\otimes\eta\right\|_{L^{\infty}(B_a)}+
\sum_{n=N+1}^{\infty}\|\mathcal{K}_n\phi\otimes\cdots\otimes\phi\|_{L^{\infty}(B_a)}
\\
&\le
C_1\|(I-\mathcal{K}_1K_1)\eta\|_{L^{\infty}(B_a)}+
C_2\frac{\left((\mu_{\infty}+\nu_{\infty})\|\mathcal{K}_1\|_{\infty}\|\phi\|_{L^{\infty}(B_a)}\right)^{N+1}}{1-(\mu_{\infty}+\nu_{\infty})\|\mathcal{K}_1\|_{\infty}\|\phi\|_{L^{\infty}(B_a)}},
\ea
where $C_1,C_2$ are positive constants. This completes the proof.
\end{proof}

\subsection{Recursive algorithm}
\hfill\vskip1mm

It is possible to construct a recursive algorithm \cite{Moskow-Schotland09}. For the sake of numerical calculation, we discretize spatial variables and consider $\bv{\eta}\in N_{\Omega}$ instead of $\eta(x)$, where $N_{\Omega}$ is the number of discrete points in the domain.

We prepare the forward function,
\be
\Rm^{M_{\rm SD}}\ni\bv{K}\left(n,\bv{a}^{(1)},\dots,\bv{a}^{(n)}\right),
\quad n=1,2,\dots,
\ee
where $\bv{a}^{(1)},\dots,\bv{a}^{(n)}$ are $N_{\Omega}$ dimensional vectors. The $n$th term $\bv{\phi}^{(n)}\in\Rm^{M_{\rm SD}}$ in the Born series is obtained as
\be
\bv{\phi}^{(n)}=\bv{K}\left(n,\bv{\eta},\dots,\bv{\eta}\right),
\quad n=1,2,\dots.
\ee
We also prepare the inverse function,
\begin{equation}
\Rm^{N_{\Omega}}\ni\bv{\mathcal{K}}\left(n,\bv{b}^{(1)},\dots,\bv{b}^{(n)}\right),
\quad n=1,2,\dots,
\label{sec9:invfunc}
\end{equation}
where $\bv{b}^{(1)},\dots,\bv{b}^{(n)}$ are $M_{\rm SD}$ dimensional vectors. The $n$th term $\bv{\eta}^{(n)}\in\Rm^{N_{\Omega}}$ in the inverse Born series is calculated as
\be
\bv{\eta}^{(n)}=\bv{\mathcal{K}}\left(n,\bv{\phi},\dots,\bv{\phi}\right),
\quad n=1,2,\dots.
\ee

The inverse function in (\ref{sec9:invfunc}) has a recursive structure. To compute $\bv{\eta}^{(n)}$, first, obtain the compositions $[i_1,\dots,i_m]$ such that $i_1+\cdots+i_m=n$. For each $m$ ($1\le m\le n-1$) and each composition $(i_1,\dots,i_m)$, we calculate
\be
\bv{\eta}_{\rm tmp}=\bv{\mathcal{K}}\left(m,
\bv{K}(i_1,\bv{\eta}^{(1)},\dots,\bv{\eta}^{(i_1)}),\dots,
\bv{K}(i_m,\bv{\eta}^{(n-i_m+1)},\dots,\bv{\eta}^{(n)})\right).
\ee
Let $\bv{\Sigma}_2$ denote the sum of $\bv{\eta}_{\rm tmp}$ for all $\begin{pmatrix}n-1\\m-1\end{pmatrix}$ compositions. This procedure is repeated for all $m$ ($1\le m\le n-1$). Let $\bv{\Sigma}_1$ denote the sum of the results:
\be
\bv{\Sigma}_1=\sum_{m=1}^{n-1}\bv{\Sigma}_2.
\ee
Finally, the function $\bv{\mathcal{K}}$ returns $\bv{\eta}^{(n)}=\bv{\Sigma}_1$. 

Let us look at the first a few steps of the above procedure for the Born and inverse Born series:
\ba
\bv{\phi}&=\bv{\phi}^{(1)}+\bv{\phi}^{(2)}+\bv{\phi}^{(3)}+\cdots,
\\
\bv{\eta}&=\bv{\eta}^{(1)}+\bv{\eta}^{(2)}+\bv{\eta}^{(3)}+\cdots.
\ea
The algorithm is written as follows.
\begin{itemize}
\item[Step 1.] First order:
\be
\bv{\eta}^{(1)}=\bv{\mathcal{K}}(1,\bv{\phi}),
\ee
where $\bv{\mathcal{K}}(1,\bv{\phi})$ can be computed by a regularized pseudoinverse.
\item[Step 2.] Second order:
\be
\bv{\eta}^{(2)}=\bv{\mathcal{K}}(2,\bv{\phi},\bv{\phi})=
-\bv{\mathcal{K}}\left(1,\bv{K}(2,\bv{\eta}^{(1)},\dots,\bv{\eta}^{(1)})\right).
\ee
\item[Step 3.] Third order:
\be
\begin{aligned}
\bv{\eta}^{(3)}
&=
\bv{\mathcal{K}}(3,\bv{\phi},\bv{\phi},\bv{\phi})
\\
&=
-\bv{\mathcal{K}}\left(1,\bv{K}(3,\bv{\eta}^{(1)},\bv{\eta}^{(1)},\bv{\eta}^{(1)})\right)-
\bv{\mathcal{K}}\left(2,\bv{K}(1,\bv{\eta}^{(1)}),\bv{K}(2,\bv{\eta}^{(1)},\bv{\eta}^{(1)},\bv{\eta}^{(1)})\right)
\\
&\quad-
\bv{\mathcal{K}}\left(2,\bv{K}(2,\bv{\eta}^{(1)},\bv{\eta}^{(1)}),\bv{K}(1,\bv{\eta}^{(1)})\right).
\end{aligned}
\ee
\end{itemize}
The $N$th-order approximation is given by
\be
\bv{\eta}\approx\bv{\eta}^{(1)}+\bv{\eta}^{(2)}+\cdots+\bv{\eta}^{(N)}.
\ee

As an example, we here consider a two-dimensional radial problem of diffuse optical tomography for structured illumination. In the polar coordinate system we have $x=(r,\theta)$, where $r>0$ is the radial coordinate and $\theta\in[0,2\pi)$ is the angular coordinate. Let $\Omega$ be the disk of radius $R$ centered at the origin. Assuming that $\eta$ has the radial symmetry, we can write
\be
\eta(x)=\eta(r),\quad0<r<R.
\ee
Let us suppose that there is a target of radius $R_a$ in the disk:
\be
\eta(r)=\left\{\begin{aligned}
\eta_a,&\quad 0\le r\le R_a,
\\
0,&\quad R_a<r\le R.
\end{aligned}\right.
\ee
We assume the following spatially oscillating source:
\be
S(r,\theta)=e^{il\theta}\frac{1}{r}\delta(r-R),\quad l=1,\dots,M_S.
\ee

Since this problem is rather simple, we can solve the forward problem. Let us express the Green's function $G(x,y)$, which has the source term $\frac{1}{r_x}\delta(r_x-r_y)\delta(\theta_x-\theta_y)$, as
\be
G(x,y)=\frac{1}{2\pi}\sum_{n=-\infty}^{\infty}e^{in(\theta_x-\theta_y)}g_n(r_x,r_y),
\ee
where $g_n(r,r')$ satisfies
\ba
r^2\pp_r^2g_n(r,r')+r\pp_rg_n(r,r')-\left(k^2r^2+n^2\right)g_n(r,r')=-r\delta(r-r'),
\\
g_n(R,r')+\ell\pp_rg_n(R,r')=0.
\ea
We note that the homogeneous equation for the above equation is the modified Bessel differential equation. Hence the solution $u$ is given as a superposition of $I_n(kr),K_n(kr)$. Here, $I_n,K_n$ are the modified Bessel functions of the first and second kinds, respectively. We obtain
\begin{equation}
\begin{aligned}
g_n(r_x,r_y)
&=
K_n\left(k\max(r_x,r_y)\right)I_n\left(k\min(r_x,r_y)\right)
\\
&-
\frac{K_n(kR)+k\ell K'_n(kR)}{I_n(kR)+k\ell I'_n(kR)}I_n\left(kr_x\right)I_n\left(kr_y\right).
\end{aligned}
\label{sec9:lowergn}
\end{equation}
We note
\be
I_n'(x)=\frac{1}{2}\left(I_{n-1}(x)+I_{n+1}(x)\right),\quad
K_n'(x)=-\frac{1}{2}\left(K_{n-1}(x)+K_{n+1}(x)\right),\quad x\in\Rm.
\ee
Hence,
\be
u_0(x)=\int_0^{2\pi}\int_0^RG(x,y)S(r_y,\theta_y)r_y\,dr_yd\theta_y=
e^{il\theta_x}g_l(r_x,R),\quad l=1,2,\dots,M_S.
\ee
We have
\be
g_j(R,R)=I_j(kR)K_j(kR)-d_jI_j(kR),
\ee
where
\be
d_j=\frac{K_j(kR)+k\ell K'_j(kR)}{I_j(kR)+k\ell I'_j(kR)}I_j(kR).
\ee

For the forward data, we observe $u,u_0$ at $r_x=R$, $\theta_x=0$. That is, the outgoing light is measured at one point on the boundary while boundary values were observed at different points on the boundary in \cite{Moskow-Schotland09}. See Appendix \ref{fwd} for the calculation of $u$. Let us set $M_D=1$ (i.e., $M_{\rm SD}=M_S$). For the vector $\bv{\phi}\in\Rm^{M_{\rm SD}}$, we have
\begin{equation}\begin{aligned}
\phi_l
&=
e^{il\theta_x}\left(K_l(kR)-d_l\right)I_l(kR)-e^{il\theta_x}\left(I_l(kR)K_l(kR)+b_lK_l(kR)+c_lI_l(kR)\right)
\\
&=
-e^{il\theta_x}\left((d_l+c_l)I_l(kR)+b_lK_l(kR)\right)
\end{aligned}
\label{fdata}
\end{equation}
for $l=1,\dots,M_{\rm SD}$. We note that $b_l,c_l$, which are given in Appendix \ref{fwd}, depend on $\eta_a,R_a,k,R,\ell$.

Let us set
\be
G^{(j)}(r_x,r_y)=g_j(r_x,r_y)r_y,
\ee
where $g_j(r_x,r_y)$ was given in (\ref{sec9:lowergn}). We define $\bv{K}_1\in\Rm^{M_{\rm SD}N_r}$ as
\be
\{\bv{K}_1(\bv{a})\}_{i+(l-1)N_r}=
\frac{k^2\Delta r}{R}\sum_{n=1}^{N_r}G^{(l)}(r_i,r_n)G^{(l)}(r_n,R)\{\bv{a}\}_n
\ee
for $1\le l\le M_{\rm SD}$, $1\le i\le N_r$. Here, $\bv{a}\in\Rm^{N_r}$ is a vector and $N_r\Delta r=R$. Moreover we introduce $\bv{K}_n\in\Rm^{M_{\rm SD}N_r}$ as
\be
\{\bv{K}_n(\bv{a}^{(1)},\dots,\bv{a}^{(n)})\}_{i+(l-1)N_r}=
-k^2\Delta r\sum_{j=1}^{N_r}G^{(l)}(r_i,r_j)\{\bv{a}^{(n)}\}_j
\{\bv{K}_{n-1}(\bv{a}^{(1)},\dots,\bv{a}^{(n-1)})\}_{j+(l-1)N_r},
\ee
where $\bv{a}^{(1)},\dots,\bv{a}^{(n)}\in\Rm^{N_r}$ are vectors. Let $\bv{\mathcal{K}}_1$ be the regularized pseudoinverse of $\bv{K}_1$:
\be
\bv{\mathcal{K}}_1=\bv{K}_{1,{\rm reg}}^+.
\ee
For a vector $\bv{b}$, we have $\bv{\mathcal{K}}(1,\bv{b})=\bv{\mathcal{K}}_1\bv{b}$. Reconstructed results are shown in Fig.~\ref{sec9:fig01}. Since regularization is performed, the true shape cannot be reconstructed even in the absence of noise. In the figure, the projection means $\bv{\mathcal{K}}_1\bv{K}_1\bv{\eta}_{\rm true}$, where $\bv{\eta}_{\rm true}$ is the true shape of the target.

\begin{figure}[ht]
\begin{center}
\includegraphics[width=0.4\textwidth]{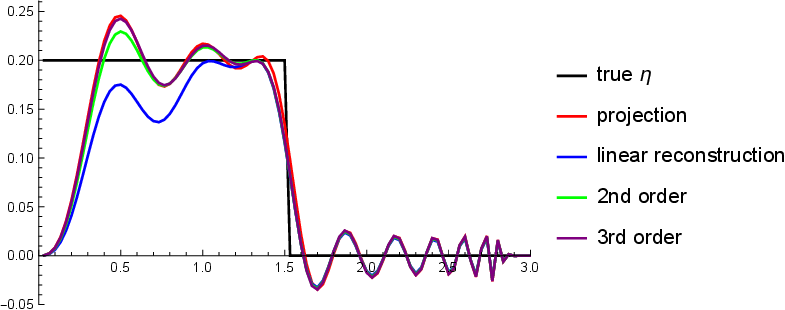}
\includegraphics[width=0.4\textwidth]{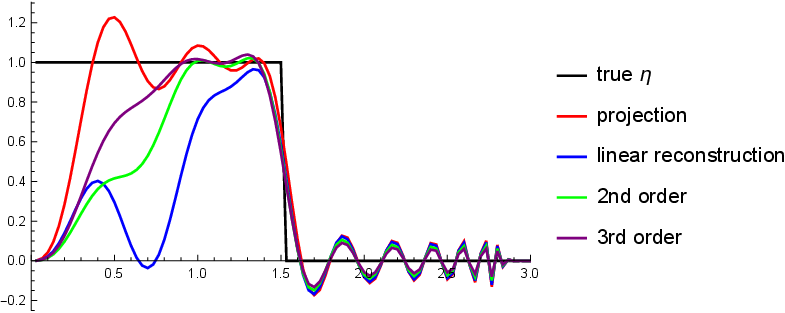}
\end{center}
\caption{
(Left) $\eta_a=0.2$ and (Right) $\eta_a=1.0$ for $N=3$ and $23$ largest singular values are used.
}
\label{sec9:fig01}
\end{figure}

\subsection{Inverse Rytov series}
\hfill\vskip1mm

The Rytov series is an expansion in the exponential function. Compared to (\ref{sec9:Bornu0u1}), the Rytov series is written as
\be
u=u_0e^{-\psi_1-\psi_2-\cdots}.
\ee
In particular, the first-order approximation is given by
\be
\psi_1=\ln{u_0}-\ln{u}.
\ee
This can be compared to the Born approximation $u_0-u=\phi\approx K_1\eta$.

Let us assume that the Born series converges. Then we observe
\ba
\ln\frac{u}{u_0}
&=
\ln\frac{u_0+u_1+\cdots}{u_0}=
\ln\left(1+\sum_{n=1}^{\infty}\frac{u_n}{u_0}\right)
\\
&=
\sum_{k=1}^{\infty}\frac{(-1)^{k+1}}{k}\left(\sum_{n=1}^{\infty}\frac{u_n}{u_0}\right)^k
\\
&=
\frac{u_1+u_2+\cdots}{u_0}-\frac{(u_1+u_2+\cdots)^2}{2u_0^2}+
\frac{(u_1+u_2+\cdots)^3}{3u_0^3}-\cdots
\\
&=
-\psi_1-\psi_2-\cdots.
\ea
By collecting the first- and second-order terms, the first two terms of the Rytov series are explicitly written as
\be
\psi_1=-\frac{u_1}{u_0},\quad
\psi_2=-\frac{u_2}{u_0}+\frac{1}{2}\left(\frac{u_1}{u_0}\right)^2.
\ee
In general, we have
\be
\psi_n=\sum_{m=1}^n\frac{(-1)^m}{mu_0^m}\sum_{i_1+\cdots+i_m=n}u_{i_1}\cdots u_{i_m},\quad n=1,2,\dots.
\ee

We introduce the forward operators $J_j:L^{\infty}(B_a)\times\cdots\times L^{\infty}(B_a)\to L^{\infty}(\pp\Omega)$ such that
\be
\psi_n=J_n\eta^{\otimes n}\quad(n=1,2,\dots).
\ee
Note that $J_n$ are multilinear. We have
\ba
&
J_1\eta=\frac{1}{u_0}K_1\eta=
\frac{g}{u_0(x)}\int_{\Omega}G(x,y)u_0(y)\eta(y)\,dy,
\\
&
J_2\eta\otimes\eta=
\frac{1}{u_0}K_2\eta\otimes\eta+\frac{1}{2}\left(\frac{1}{u_0}K_1\eta\right)^2
\\
&=
\frac{g^2}{u_0(x)}\int_{\Omega}\int_{\Omega}G(x,y)G(y,z)u_0(z)\eta(y)\eta(z)\,dydz
+\frac{g^2}{2u_0(x)^2}\left(\int_{\Omega}G(x,y)u_0(y)\eta(y)\,dy\right)^2.
\ea
In general, the $n$th term is given by
\be
J_n\eta^{\otimes n}=\sum_{m=1}^n\frac{1}{m}\sum_{i_1+\cdots+i_m=n}
\left(\frac{1}{u_0}K_{i_1}\eta^{\otimes i_1}\right)\cdots\left(\frac{1}{u_0}K_{i_m}\eta^{\otimes i_m}\right).
\ee

We can similarly construct the inverse Rytov series:
\be
\eta=\mathcal{J}_1\psi+\mathcal{J}_2\psi\otimes\psi+
\mathcal{J}_3\psi\otimes\psi\otimes\psi+\cdots.
\ee
We obtain
\be
\mathcal{J}_n=-\left(\sum_{m=1}^{n-1}\mathcal{J}_m\sum_{i_1+\cdots+i_m=n}
J_{i_1}\otimes\cdots\otimes J_{i_m}\right)
\mathcal{J}_1\otimes\cdots\otimes\mathcal{J}_1,
\quad n\ge2.
\ee
Here, $\mathcal{J}_1$ is a regularized pseudoinverse of $J_1$.

An error estimate similar to Theorem \ref{sec9:errorthm} was proved for the inverse Rytov series \cite{Machida23}.

\subsection{Remarks}
\hfill\vskip1mm

For the inverse Born and inverse Rytov series above, $L^{\infty}$-norm was used. Indeed, it is possible to use $L^p$-norm.

Since the cost function which appears in optical tomography has a complicated landscape with local minima, iterative schemes are difficult to apply; the calculation is trapped by a local minimum. The approach with inverse series does not have this issue of being trapped by a local minimum because it relies on perturbation theory.

It should be emphasized that the inverse Born and inverse Rytov series can be applied to different partial differential equations although we used diffuse optical tomography as an example of these methods.

\section{Other approaches and X-ray CT}
\label{sec10}

Instead of giving a comprehensive introduction to the field of inverse problems, rather this note has focused on a few examples. Statistical approaches provide important tools. We refer the reader to \cite{Kaipio-Somersalo04}. As examples of inverse solvers for medical imaging, below, we briefly introduce the Fourier transform for MRI and the Radon transform for X-ray CT. See, for example, \cite{Natterer86,Kaipio-Somersalo04,Epstein08} for more about X-ray CT.

\subsection{MRI}
\hfill\vskip1mm

In the magnetic resonance imaging (MRI), the magnetization of a hydrogen nucleus (i.e., proton) is detected. By applying magnetic fields, we can detect the change of the magnetization as voltage. Let us draw the $x_1$ and $x_2$ axes on a cross section of the human body. Let $f(x)$ be the density of hydrogen nuclei in the slice at $x\in\Rm^2$. Then the signal $g(q)$ ($q\in\Rm^2$) is written as
\be
g(q)=\int_{\Rm^2}f(x)e^{iq\cdot x}\,dx,\quad q\in\Rm^2.
\ee
Gradient magnetic fields are applied during measurements. The gradient is specified by $q_1,q_2\in\Rm$.

As an example, let us suppose that the signal is detected as
\be
g(q)=e^{-\frac{\sigma^2}{2}|q|^2+iq\cdot x_0},\quad q\in\Rm^2,
\ee
where $x_0\in\Rm^2$. Then we obtain
\ba
f(x)
&=
\frac{1}{(2\pi)^2}\int_{\Rm^2}g(q)e^{-iq\cdot x}\,dq
\\
&=
\frac{1}{(2\pi)^2}\int_{\Rm^2}
e^{-\frac{\sigma^2}{2}|q|^2-iq\cdot(x-x_0)}\,dq
\\
&=
\frac{1}{2\pi\sigma^2}e^{-\frac{|x-x_0|^2}{2\sigma_0^2}}.
\ea

\subsection{The Radon transform}
\hfill\vskip1mm

Let us consider the X-ray propagation in the two-dimensional plane $\Rm^2$. Let $\mu=\mu(x)$ be the absorption, where $\mu\ge0$ and $x=(x_1,x_2)\in\Rm^2$. We assume that $\mu$ is piecewise continuous and compactly supported in $\overline{\Omega}$, where $\Omega$ is a domain in $\Rm^2$. For example, $\Omega$ is a cross section of the human body. Let us assume the incident beam which is a unidirectional point beam at $x_0\in\pp\Omega$ in direction $\theta_0\in\Sm^2$. The specific intensity $I$ of X-ray obeys
\be
\left\{\begin{aligned}
&
\uv\cdot\nabla I+\mu I=0,\quad x\in\Omega,
\\
&
I=I_0\delta(x-x_0)\delta(\theta-\theta_0),\quad x\in\pp\Omega,
\end{aligned}\right.
\ee
where $I_0>0$. Here, $\theta\in\Sm^2$ and $\nabla=(\pp/\pp x_1,\pp/\pp x_2)^T$. Let $\omega=(\cos\va,\sin\va)^T$ ($0\le\va<2\pi$) be a unit vector perpendicular to $\theta_0$ and $s\in\Rm$ be the signed distance of the line for the ray corresponding to the incident beam. Let $L=L(\omega,s)$ denote this line:
\be
L(\omega,s)=\{x\in\Rm^2;\;\omega\cdot x=s\}.
\ee
When the specific intensity $I$ is detected on $\pp\Omega$, we obtain
\be
I=I_1\delta(\theta-\theta_0),
\ee
where
\be
I_1=I_0 e^{-\int_{L(\omega,s)}\mu(x)\,dx}.
\ee

By measurements, we obtain the data function\footnote{
The calculation in Sec.~\ref{sec10} is based on the lecture by Vadim A. Markel (University of Pennsylvania, 2008).
}
\be
\Phi(\omega,s)=-\ln\frac{I_1}{I_0}.
\ee
We note that
\be
\Phi(\omega,s)=\int_{L(\omega,s)}\mu(x)\,dx.
\ee
Indeed, this is called the Radon transform of $\mu$ and we can write
\ba
\Phi(\omega,s)
&=
(\mathcal{R}\mu)(\omega,s)
\\
&=
\int_{-\infty}^{\infty}\mu(s\omega+t\omega^{\perp})\,dt
\\
&=
\int_{\Rm^2}\mu(x)\delta(\omega\cdot x-s)\,dx.
\ea
Here, $\omega^{\perp}$ is the unit vector which is perpendicular to $\omega$ and satisfies $\det(\omega\omega^{\perp})>0$. That is, $\omega=(\cos\va,\sin\va)^T$, $\omega^{\perp}=(-\sin\va,\cos\va)^T$ ($0\le\va<2\pi$).

\subsection{The filtered back projection}
\hfill\vskip1mm

The Fourier transform of the data function is given by
\be
(\mathcal{F}\Phi)(\omega,\tau)=
\int_{-\infty}^{\infty}\Phi(\omega,s)e^{-i\tau s}\,ds,\quad\tau\in\Rm.
\ee
Similarly we introduce the Fourier transform of $\mu$ as
\be
(\mathcal{F}\mu)(q)=\int_{\Rm^2}\mu(x)e^{-iq\cdot x}\,dx,\quad q\in\Rm^2.
\ee
We have
\ba
(\mathcal{F}\Phi)(\omega,\tau)
&=
\int_{-\infty}^{\infty}e^{-i\tau s}\int_{\Rm^2}\mu(x)\delta(\omega\cdot x-s)\,dxds
\\
&=
\int_{\Rm^2}\mu(x)e^{-i\tau(\omega\cdot x)}\,dx
\\
&=
(\mathcal{F}\mu)(\tau\omega).
\ea
This relation $(\mathcal{F}\mathcal{R}\mu)(\omega,\tau)=(\mathcal{F}\mu)(\tau\omega)$ is a special case of the Fourier slice theorem.

By the inverse Fourier transform, we obtain
\ba
\mu(x)
&=
\frac{1}{(2\pi)^2}\int_{\Rm^2}(\mathcal{F}\mu)(q)e^{iq\cdot x}\,dq
\\
&=
\frac{1}{(2\pi)^2}\int_0^{2\pi}\int_0^{\infty}(\mathcal{F}\mu)(\tau\omega)
e^{i\tau\omega\cdot x}\tau\,d\tau d\va
\\
&=
\frac{1}{(2\pi)^2}\int_0^{2\pi}\int_0^{\infty}(\mathcal{F}\Phi)(\omega,\tau)
e^{i\tau\omega\cdot x}\tau\,d\tau d\va.
\ea
Noting that $(\mathcal{F}\Phi)(-\omega,\tau)=(\mathcal{F}\Phi)(\omega,-\tau)$, we can continue as
\ba
\mu(x)
&=
\frac{1}{(2\pi)^2}\int_0^{\infty}\left[
\int_0^{\pi}(\mathcal{F}\Phi)(\omega,\tau)e^{i\tau\omega\cdot x}\,d\va+
\int_{\pi}^{2\pi}(\mathcal{F}\Phi)(\omega,\tau)e^{i\tau\omega\cdot x}\,d\va
\right]\tau\,d\tau
\\
&=
\frac{1}{(2\pi)^2}\int_0^{\pi}\int_{-\infty}^{\infty}
(\mathcal{F}\Phi)(\omega,\tau)e^{i\tau\omega\cdot x}|\tau|\,d\tau d\va
\\
&=
\frac{1}{2\pi}\int_0^{\pi}g(\omega,\omega\cdot x)\,d\va,
\ea
where
\be
g(\omega,\omega\cdot x)=
\frac{1}{2\pi}\int_{-\infty}^{\infty}
(\mathcal{F}\Phi)(\omega,\tau)e^{i\tau\omega\cdot x}|\tau|\,d\tau.
\ee
Since the above expression has a filter $|\tau|$ compared to the projection 
$\frac{1}{2\pi}\int_{-\infty}^{\infty}(\mathcal{F}\Phi)(\omega,\tau)e^{i\tau\omega\cdot x}\,d\tau=\Phi(\omega,\omega\cdot x)$, the expression is said to be the filtered projection. Hence,
\begin{equation}
\mu(x)=
\frac{1}{(2\pi)^2}\int_0^{\pi}\int_{-\infty}^{\infty}\int_{-\infty}^{\infty}
\Phi(\omega,s)e^{-i\tau(s-\omega\cdot x)}|\tau|\,dsd\tau d\va.
\label{sec10:mux0}
\end{equation}

\subsection{Reconstruction with the Tikhonov regularization}
\hfill\vskip1mm

Numerical solutions with the inversion formula (\ref{sec10:mux0}) are unstable. We note that $\mathcal{R}$ is not a unitary transformation from $L^2(\Rm^2)$ to $L^2(\Rm\times\Sm^1)$ and $\mathcal{R}^*\neq\mathcal{R}^{-1}$. Similar to Sec.~\ref{sec4}, we will obtain a regularized pseudoinverse of $\mathcal{R}$.

We note that
\be
\int_0^{2\pi}\int_{-\infty}^{\infty}(\mathcal{R}\mu)(\omega,s)h(\omega,s)\,dsd\va
=\int_0^{2\pi}\int_{\Rm^2}\mu(x)h(\omega,\omega\cdot x)\,dxd\va
\ee
for a function $h(\omega,s)$. This calculation implies that the adjoint $\mathcal{R}^*$ is given by
\begin{equation}
(\mathcal{R}^*h)(x)=\int_0^{2\pi}h(\omega,\omega\cdot x)\,d\va.
\label{sec10:mux0sta}
\end{equation}
Thus,
\be
(\mathcal{R}^*\mathcal{R}\mu)(x)=\int_{\Rm^2}\left(
\int_0^{2\pi}\delta(\omega\cdot y-\omega\cdot x)\,d\va
\right)\mu(y)\,dy.
\ee
We obtain for $\phi_q(x)=\frac{1}{2\pi}e^{-iq\cdot x}$ ($q\in\Rm^2$),
\ba
(\mathcal{R}^*\mathcal{R}\phi_q)(x)
&=
\frac{1}{2\pi}\int_{\Rm^2}\left[\int_0^{2\pi}\int_{-\infty}^{\infty}
e^{ik\omega\cdot(x-y)}\,dkd\va\right]\phi_q(y)\,dy
\\
&=
\frac{1}{2\pi}\int_0^{2\pi}\int_{-\infty}^{\infty}
e^{ik\omega\cdot x}(\mathcal{F}\phi_q)(k\omega)\,dkd\va
\\
&=
\frac{1}{2\pi}\int_{\Rm^2}\left[\int_0^{2\pi}\int_{-\infty}^{\infty}
e^{ik|x-y|\cos\va}\,dkd\va\right]\phi_q(y)\,dy
\\
&=
2\int_{\Rm^2}\left[\int_0^{\infty}J_0(k|x-y|)\,dk\right]\phi_q(y)\,dy
\\
&=
2\int_{\Rm^2}\frac{\phi_q(y)}{|x-y|}\,dy,
\ea
where $J_0$ is the Bessel function of order zero. Furthermore,
\ba
\int_{\Rm^2}\frac{e^{-iq\cdot y}}{|x-y|}\,dy
&=
\int_{-\infty}^{\infty}\int_{-\infty}^{\infty}
\frac{e^{-i(q_1y_1+q_2y_2)}}{\sqrt{(x_1-y_1)^2+(x_2-y_2)^2}}\,dy_1dy_2
\\
&=
2e^{-iq_1x_1}\int_{-\infty}^{\infty}e^{-iq_2y_2}\left[\int_0^{\infty}
\frac{\cos{(q_1|y_2-x_2|t)}}{\sqrt{1+t^2}}\,dt\right]\,dy_2
\\
&=
2e^{-iq_1x_1}\int_{-\infty}^{\infty}K_0\left(|x_2-y_2||q_1|\right)e^{-iq_2y_2}\,dy_2
\\
&=
\frac{4}{|q_2|}e^{-iq_1x_1}e^{-iq_2x_2}\int_0^{\infty}K_0\left(\left|\frac{q_1}{q_2}\right|s\right)\cos{s}\,ds
\\
&=
\frac{4}{|q_2|}e^{-iq_1x_1}e^{-iq_2x_2}\frac{\pi}{2\sqrt{1+\left(\frac{q_1}{q_2}\right)^2}}
\\
&=
\frac{2\pi}{|q|}e^{-iq\cdot x},
\ea
where $K_0$ is the modified Bessel function of the second kind of order zero. We obtain
\be
(\mathcal{R}^*\mathcal{R}\phi_q)(x)
=\lambda_q\phi_q(x),\quad\lambda_q=\frac{4\pi}{|q|},\quad q\in\Rm^2.
\ee

Let us define
\be
\psi_q=\frac{1}{\sigma_q}\mathcal{R}\phi_q,\quad
\sigma_q=\sqrt{\lambda_q}.
\ee
We note that
\ba
\psi_q(\omega,s)
&=
\frac{1}{\sigma_q}\int_{-\infty}^{\infty}\phi_q(s\omega+t\omega^{\perp})\,dt=
\frac{1}{2\pi\sigma_q}\int_{-\infty}^{\infty}e^{-iq\cdot(s\omega+t\omega^{\perp})}\,dt
\\
&=
\frac{1}{\sigma_q}e^{-isq\cdot\omega}\delta\left(q\cdot\omega^{\perp}\right).
\ea
We have
\be
\mathcal{R}^*\psi_q=\sigma_q\phi_q,\quad
\mathcal{R}\phi_q=\sigma_q\psi_q.
\ee

With the relation $\mathcal{R}^*\mathcal{R}\mu=\mathcal{R}^*\Phi$, we obtain
$\mu=(\mathcal{R}^*\mathcal{R})^{-1}\mathcal{R}^*\Phi$. We begin with the expression
\be
\mu(x)=
\int_{\Rm^2}\left(\int_{\Rm^2}\mu(y)\overline{\phi_q(y)}\,dy\right)\phi_q(x)\,dq,
\ee
where $\bar{\phantom{\phi}}$ denotes complex conjugate. Using the relation
\ba
\int_0^{2\pi}\int_{-\infty}^{\infty}\Phi(\omega,s)\overline{\psi_q(\omega,s)}\,dsd\va
&=
\int_0^{2\pi}\int_{-\infty}^{\infty}(\mathcal{R}\mu)(\omega,s)\frac{1}{\sigma_q}\overline{(\mathcal{R}\phi_q)(\omega,s)}\,dsd\va
\\
&=
\int_{\Rm^2}\mu(x)\frac{1}{\sigma_q}\overline{(\mathcal{R}^*\mathcal{R}\phi_q)(x)}\,dx
\\
&=
\sigma_q\int_{\Rm^2}\mu(x)\overline{\phi_q(x)}\,dx,
\ea
we obtain
\be
\mu(x)=
\int_{\Rm^2}\frac{1}{\sigma_q}\left(
\int_0^{2\pi}\int_{-\infty}^{\infty}\Phi(\omega,s)\overline{\psi_q(\omega,s)}\,dsd\va\right)\phi_q(x)\,dq.
\ee

Instead of $\mathcal{R}^{-1}$, we consider its approximation $R_{\alpha}$ as follows by inserting the Tikhonov filter.
\be
\mu_{\alpha}(x)=(R_{\alpha}\Phi)(x)=
\int_{\Rm^2}\frac{\lambda_q}{\alpha+\lambda_q}\frac{1}{\sigma_q}\left(
\int_0^{2\pi}\int_{-\infty}^{\infty}\Phi(\omega,s)\overline{\psi_q(\omega,s)}\,dsd\va\right)\phi_q(x)\,dq.
\ee
We have
\ba
\mu_{\alpha}(x)
&=
\frac{1}{(2\pi)^2}\int_{\Rm^2}\frac{1}{\alpha+\lambda_q}\left(
\int_0^{2\pi}\int_{-\infty}^{\infty}\int_{-\infty}^{\infty}\Phi(\omega,s)e^{iq\cdot(s\omega+t\omega^{\perp})}\,dsdtd\va\right)e^{-iq\cdot x}\,dq
\\
&=
\frac{1}{2\pi}\int_{\Rm^2}\frac{1}{\alpha+\lambda_q}\left(
\int_0^{2\pi}\delta(q\cdot\omega^{\perp})(\mathcal{F}\Phi)(\omega,-q\cdot\omega)\,d\va\right)e^{-iq\cdot x}\,dq.
\ea
Let us express $q\in\Rm^2$ as $q=-\tau\omega+q_{\perp}\omega^{\perp}$ ($\tau,q_{\perp}\in\Rm$). Then we have
\ba
\mu_{\alpha}(x)
&=
\frac{1}{2\pi}\int_{-\infty}^{\infty}\int_{-\infty}^{\infty}\frac{1}{\alpha+\lambda_q}\left(
\int_0^{2\pi}\delta(q_{\perp})(\mathcal{F}\Phi)(\omega,\tau)\,d\va\right)e^{-iq\cdot x}\,d\tau dq_{\perp}
\\
&=
\frac{1}{8\pi^2}\int_0^{2\pi}\int_{-\infty}^{\infty}\frac{1}{1+\alpha|\tau|/(4\pi)}
(\mathcal{F}\Phi)(\omega,\tau) e^{i\tau \omega\cdot x}|\tau|\,d\tau d\va
\\
&=
\frac{1}{2\pi}\int_0^{\pi}g_{\alpha}(\omega,\omega\cdot x)\,d\va,
\ea
where
\be
g_{\alpha}(\omega,\omega\cdot x)=
\frac{1}{2\pi}\int_{-\infty}^{\infty}\frac{1}{1+\alpha|\tau|/4}
(\mathcal{F}\Phi)(\omega,\tau)e^{i\tau\omega\cdot x}|\tau|\,d\tau.
\ee

The reconstructed $\mu$ is written as
\begin{equation}\begin{aligned}
\mu_{\alpha}(x)
&=
\frac{1}{(2\pi)^2}\int_0^{\pi}\int_{-\infty}^{\infty}\int_{-\infty}^{\infty}
\frac{1}{1+\alpha|\tau|/4}
\Phi(\omega,s)e^{i\tau(\omega\cdot x-s)}|\tau|\,dsd\tau d\va
\\
&=
\frac{1}{2\pi}\int_0^{\pi}\int_{-\infty}^{\infty}
\Phi(\omega,s)\mathcal{I}(\omega\cdot x-s)\,dsd\va.
\end{aligned}
\label{sec10:mualpha}
\end{equation}
Here we introduced $\mathcal{I}(\xi)$, $\xi\in\Rm$, as
\be
\mathcal{I}(\xi)
=\frac{1}{2\pi}\int_{-\infty}^{\infty}
\frac{|\tau|\tau_{\rm max}}{\tau_{\rm max}+|\tau|}e^{i\tau\xi}\,d\tau
=\frac{\tau_{\rm max}}{\pi}\int_0^{\infty}
\frac{\tau}{\tau+\tau_{\rm max}}\cos{(\tau\xi)}\,d\tau,
\ee
where $\tau_{\rm max}=4/\alpha>0$. The above expression implies $\mathcal{I}(-\xi)=\mathcal{I}(\xi)$.

We note that
\ba
\mathcal{I}(\xi)
&=
\frac{\tau_{\rm max}}{\pi}\frac{d}{d\xi}\int_0^{\infty}
\frac{\sin{(\tau\xi)}}{\tau+\tau_{\rm max}}\,d\tau
\\
&=
\frac{\tau_{\rm max}}{\pi}\frac{d}{d\xi}\left[
\mathop{\mathrm{ci}}(\tau_{\rm max}\xi)\sin{(\tau_{\rm max}\xi)}-
\mathop{\mathrm{si}}(\tau_{\rm max}\xi)\cos{(\tau_{\rm max}\xi)}\right],
\ea
where the cosine integral function and sine integral function are given by
\be
\mathop{\mathrm{ci}}(\xi)=-\int_{\xi}^{\infty}\frac{\cos{t}}{t}\,dt,\quad
\mathop{\mathrm{si}}(\xi)=-\int_{\xi}^{\infty}\frac{\sin{t}}{t}\,dt.
\ee
The function $\mathcal{I}(\xi)$ is obtained as
\begin{equation}
\mathcal{I}(\xi)=
\frac{\tau_{\rm max}^2}{\pi}\left[
\mathop{\mathrm{ci}}(\tau_{\rm max}\xi)\cos{(\tau_{\rm max}\xi)}+
\mathop{\mathrm{si}}(\tau_{\rm max}\xi)\sin{(\tau_{\rm max}\xi)}\right].
\label{sec10:funcI}
\end{equation}

\subsection{Numerical experiments}
\hfill\vskip1mm

Let us perform numerical experiments for the target shown in Fig.~\ref{sec10:fig01}. Let $\Omega$ be an $L\times L$ square. The distances from the origin to the outer and inner edges are $L/2$ and $L/4$, respectively. We set $\mu=1/L$ inside the medium and $\mu=0$ otherwise. Hereafter, we take $L$ to be the unit of length.

\begin{figure}[ht]
\begin{center}
\includegraphics[width=0.3\textwidth]{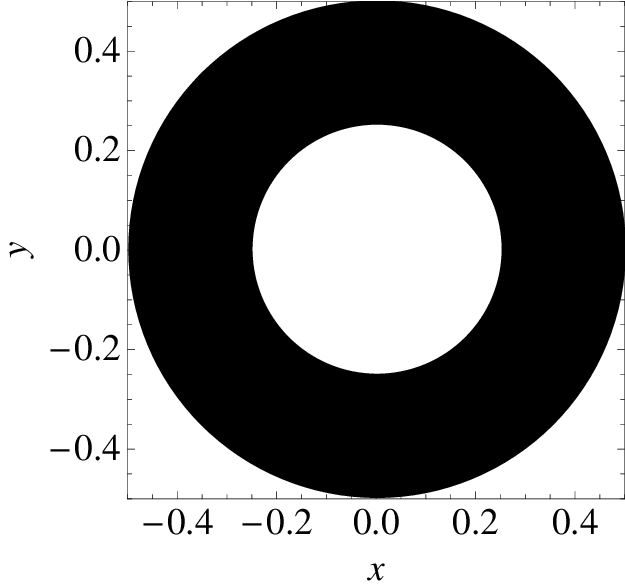}
\end{center}
\caption{
Target.
}
\label{sec10:fig01}
\end{figure}

For the ideal measurement in the absence of noise, the data function $\Phi(\omega,s)$ is given by
\be
\left\{\begin{aligned}
\Phi=0,&\quad 0.5<|s|,\\
\Phi=\sqrt{1-4s^2},&\quad 0.25<|s|\le 0.5,\\
\Phi=\sqrt{1-4s^2}-\sqrt{0.25-4s^2},&\quad |s|\le 0.25.
\end{aligned}\right.
\ee
The function $\Phi(\omega,s)$ is plotted in Fig.~\ref{sec10:fig02}. We see that
\be
\Phi_{\rm max}=\max_{\omega,s}{\Phi(\omega,s)}=
\Phi(\omega,0.25)=\frac{\sqrt{3}}{2}.
\ee

\begin{figure}[ht]
\begin{center}
\includegraphics[width=0.5\textwidth]{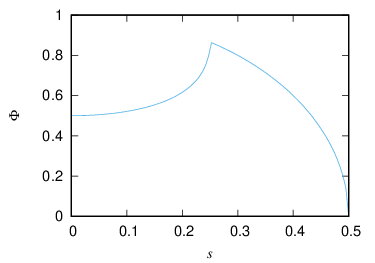}
\end{center}
\caption{
The data function $\Phi(\omega,s)$ as a function of $s$.
}
\label{sec10:fig02}
\end{figure}

For numerical calculation, we discretize $\va,s$ as follows:
\be
\Phi_{ij}=\Phi(\omega_i,s_j),\quad
\va_i=i\frac{\pi}{N_{\va}},\quad
s_j=j\frac{s_{\rm max}}{N_s},\quad
i=0,\dots,N_{\va},\quad j=0,\pm1,\dots,\pm N_s,
\ee
where $s_{\rm max}=\sqrt{2}(L/2)=1/\sqrt{2}$, and we set
\be
N_{\va}=256,\quad N_s=128.
\ee

The numerical reconstruction is done using (\ref{sec10:mualpha}) and (\ref{sec10:funcI}). We discretize $\mu$ as
\be
\mu_{ij}=\mu(x_{1k},x_{2l}),\quad
x_{1k}=\frac{k}{2N_L},\quad x_{2l}=\frac{l}{2N_L},\quad
k,l=0,\pm 1,\dots,\pm N_L,
\ee
where we set
\be
N_L=128.
\ee

Reconstructed $\mu_{\alpha}$ is plotted in Fig.~\ref{sec10:fig03} for $\tau_{\rm max}=$ (left) $10$ and (right) $100$. Hereafter we use $\tau_{\rm max}=100$, i.e., $\alpha=4/\tau_{\rm max}=0.04$.

\begin{figure}[ht]
\begin{center}
\includegraphics[width=0.4\textwidth]{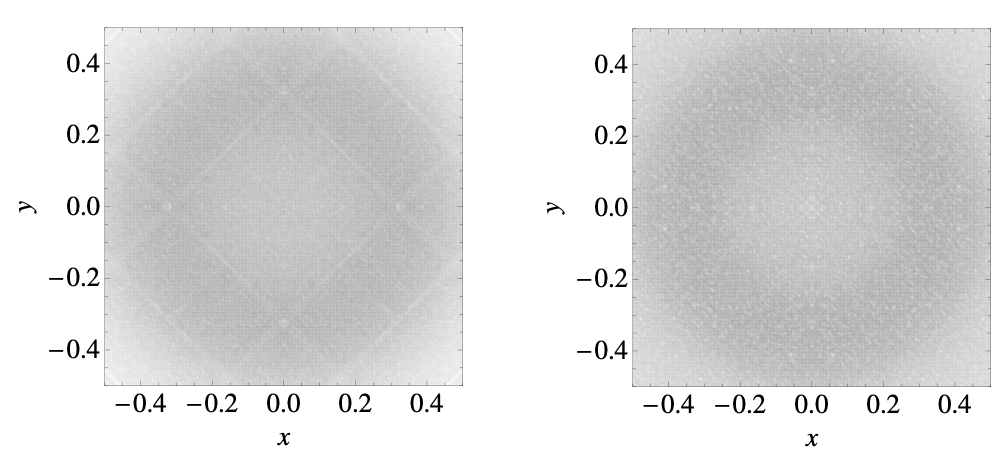}
\end{center}
\caption{
Density plots of $\mu_{\alpha}$ for $\tau_{\rm max}=10$ (left) and $100$ (right).
}
\label{sec10:fig03}
\end{figure}

Next we added Gaussian noise as
\be
\Phi_{\rm noise}(\omega,s)=\Phi(\omega,s)+\frac{\Phi_{\rm max}}{100}X,\quad
X\sim\frac{1}{\sqrt{2\pi}\sigma}e^{-X^2/(2\sigma^2)},\quad
\sigma=0.2\;\mbox{or}\;1.
\ee
We set $\tau_{\rm max}=100$. Results are shown in Fig.~\ref{sec10:fig04}.

\begin{figure}[ht]
\begin{center}
\includegraphics[width=0.4\textwidth]{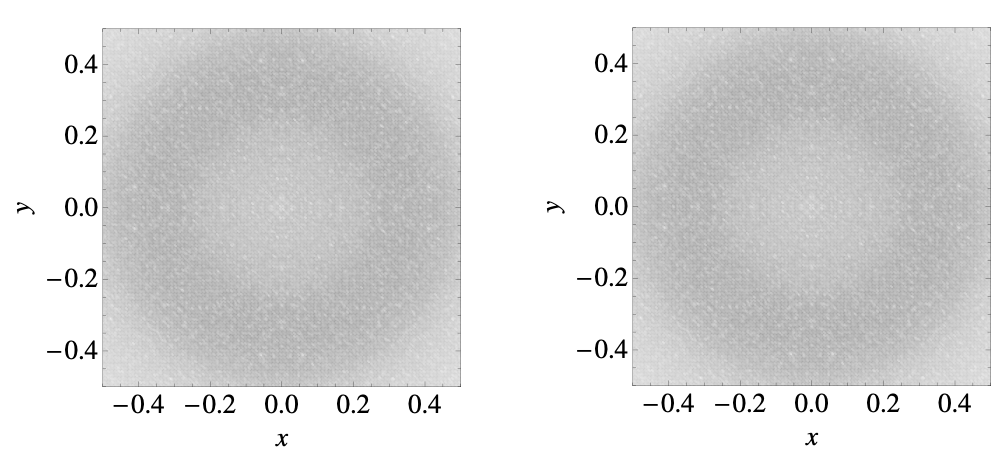}
\caption{
Density plots of $\mu_a$ in the presence of noise.
The left panel shows $\mu_a$ for the Gaussian noise with $\sigma=0.2$ and the right panel shows $\mu_a$ for the Gaussian noise with $\sigma=1.0$. In both panels, $\tau_{\rm max}=100$.
}
\label{sec10:fig04}
\end{center}
\end{figure}

%\section*{Acknowledgement}
%\ack

%\newpage
\setcounter{section}{1}
\appendix

\section{Forward data}
\label{fwd}

Let $G_a$ be the Green's function of the two-dimensional radial problem for the equation in which $\eta=\eta_a$ in $r\in[0,R_a]$ and $\eta=0$ otherwise. In the case of the delta-function source $\delta(x-x_s)$, $x_s\in\pp\Omega$, we have \cite{Moskow-Schotland09}
\be
G_a(x,x_s)=\frac{1}{2\pi}\sum_{n=-\infty}^{\infty}a_ne^{in(\theta-\theta_s)}I_n(k_ar),\quad
r\in\Omega_1,
\ee
and
\ba
G_a(x,x_s)
&=
\frac{1}{2\pi}\sum_{n=-\infty}^{\infty}e^{in(\theta-\theta_s)}I_n(kr)K_n(kR)
\\
&+
\frac{1}{2\pi}\sum_{n=-\infty}^{\infty}e^{in(\theta-\theta_s)}\left(b_nK_n(kr)+c_nI_n(kr)\right),\quad r\in\Omega_2.
\ea
Here, coefficients $a_n,b_n,c_n$ can be computed as the solution to the following system of linear equations, which is derived from the interface and boundary conditions.
\ba
&
\left(\begin{array}{ccc}
I_n(k_aR_a) & -K_n(kR_a) & -I_n(kR_a) \\
k_aI_n'(k_aR_a) & -kK_n'(kR_a) & -kI_n'(kR_a) \\
0 & K_n(kR)+k\ell K_n'(kR) & I_n(kR)+k\ell I_n'(kR)
\end{array}\right)
\left(\begin{array}{c}
a_n \\ b_n \\ c_n
\end{array}\right)
\\
&=
\left(\begin{array}{c}
I_n(kR_a)K_n(kR) \\
kI_n'(kR_a)K_n(kR) \\
k\ell I_n(kR)K_n'(kR)+I_n(kR)K_n(kR)
\end{array}\right).
\ea
Therefore we obtain for $x\in\pp\Omega$,
\ba
u(x)
&=\int_0^{2\pi}\int_0^RG_a(x,y)S(r_y,\theta_y)r_y\,dr_yd\theta_y
\\
&=
e^{ij\theta_x}\left(I_j(kR)K_j(kR)+b_jK_j(kR)+c_jI_j(kR)\right)
\ea
for $j=1,2,\dots,M_S$.

%\section*{References}

\end{document}